\documentclass[11pt,oneside, reqno]{amsart}
\usepackage[top=25mm, bottom=25mm, left=20mm, right=20mm]{geometry}
\usepackage{amsfonts,amssymb,amsmath,amsthm,amsxtra}
\usepackage[T1]{fontenc}
\usepackage[utf8]{inputenc}
\usepackage{bm}
\usepackage{mathtools}
\usepackage{dsfont}
\usepackage{mathrsfs}
\usepackage{enumitem}
\allowdisplaybreaks

\usepackage{graphics}
\usepackage{hyperref}

\numberwithin{equation}{section}
\newtheorem{theorem}[equation]{Theorem}

\newtheorem{lemma}[equation]{Lemma}
\newtheorem{proposition}[equation]{Proposition}
\theoremstyle{definition}
\newtheorem{remark}[equation]{Remark}
\newtheorem{definition}[equation]{Definition}

\DeclareMathOperator\supp{supp}

\author{Leonidas Daskalakis}
\address[Leonidas Daskalakis]{Institute of Mathematics,
Polish Academy of Sciences,
\'Sniadeckich 8,
00-656 Warsaw, Poland \& Institute of Mathematics, University of Wroc\l aw, Plac Grunwaldzki 2/4, 50-384 Wroc\l aw, Poland}
\email{ldaskalakis@impan.pl, leonidas.e.daskalakis@gmail.com}

\author{Mariusz Mirek}
\address[Mariusz Mirek]{
Department of Mathematics, Rutgers University, Piscataway, NJ 08854-8019, USA
\& Institute of Mathematics, University of Wroc\l aw, Plac Grunwaldzki 2/4, 50-384 Wroc\l aw, Poland}
\email{mariusz.mirek@rutgers.edu}

\author{M\'at\'e Wierdl}
\address[M\'at\'e Wierdl]{Department of Mathematics, University of Memphis, Memphis, TN 38152, USA}
\email{mwierdl@memphis.edu}

\thanks{Mariusz Mirek was partially supported by   NSF CAREER grant DMS-2236493.}

\begin{document}

\title[Pointwise ergodic theorems along sequences 
of intermediate growth]{Pointwise ergodic theorems along sequences \\
of intermediate growth}
\maketitle
\begin{abstract}
We establish the first pointwise convergence result for ergodic
averages with iterates along explicit and deterministic sequences of
intermediate growth, that is, growing faster than any polynomial but
slower than any exponential. In particular, we show that the sequence
$(\lfloor \exp((\log n)^c)\rfloor)_{n\in\mathbb{Z}+}$, with
$c\in(1,8/7)$, is universally $L^p$-good for every $p\in(1,\infty]$. This gives an
affirmative answer to an open problem dating back to the mid 1980s and
contributes to Bellow's program, initiated in the earlier part of the same decade, on the
characterization of $L^p$-good sequences in pointwise ergodic
theorems.

The proof combines the so-called one-frequency circle method with a delicate application of Vinogradov's method for estimating exponential sums whose phases involve $\big(\lfloor \exp((\log n)^c)\rfloor\big)_{n\in\mathbb{Z}_+}$. An interesting feature of our analysis, reminiscent of estimates arising in the study of the zero-free region of the Riemann zeta function, is that the argument relies on the classical Vinogradov method, in the sense that it necessitates estimates on the number of solutions for the Vinogradov system of Diophantine equations with explicit dependence on the system's parameters.
\end{abstract}
\section{Introduction}

\subsection{Historical background and statement of the main results}
A classical theorem of Blum and Hanson~\cite{BH}, dating back to 1960, asserts that if $T$ is an invertible strongly mixing measure-preserving transformation on a probability space $(X,\mathcal B(X),\mu)$, then for every strictly increasing sequence  of integers ${\bm a}=(a_n)_{n\in\mathbb Z_+}$ and every $f\in L^1(X)$ the averages
\begin{align}
\label{eq:26}
A_{N;T}^{{\bm a}}f(x)\coloneqq \frac{1}{N}\sum_{n=1}^N f(T^{a_n}x), \qquad x\in X,
\end{align}
converge in $L^1(X)$ norm to $\int_X f\,d\mu$; conversely, this property
characterizes strong mixing. The situation for pointwise convergence
is fundamentally different. In 1971, Krengel~\cite{Krengel} was the
first to show that there exists a strictly increasing sequence
 of integers ${\bm a}=(a_n)_{n\in\mathbb Z_+}$ such that for every
aperiodic measure-preserving transformation one can find a measurable set $A$ for which
$A_{N;T}^{{\bm a}}\mathds{1}_A$ diverges almost everywhere. About a decade later, Bellow~\cite{Bellow} proved that if ${\bm a}=(a_n)_{n\in\mathbb Z_+}$ is a lacunary sequence of integers, i.e. $\inf_{n\in\mathbb Z_+}\frac{a_{n+1}}{a_n}>1$, then for every aperiodic measure-preserving system and every $p\in[1,\infty)$ there exists a function $f\in L^p(X)$ such that the  averages $A_{N;T}^{{\bm a}}f$ diverge almost everywhere.

These results naturally led Bellow to initiate a program aimed at
identifying the sequences along which the pointwise ergodic theorem
holds universally. More precisely, at Oberwolfach in 1981,
Bellow~\cite{Bel} asked for interesting classes of sequences ${\bm a}=(a_n)_{n\in\mathbb Z_+}$ for which the averages
$A_{N;T}^{{\bm a}}f$ converge almost everywhere for every
$f\in L^p(X)$ with $p\in[1, \infty]$ and, more ambitiously, for an
intrinsic characterization of such sequences. Among the basic examples
Bellow singled out were the sequences of squares and primes. This
question was also independently formulated by Furstenberg~\cite{Fur3}.

Following the terminology that emerged from Bellow's program, we introduce the following definition.

\begin{definition}[Universally $L^p$-good/bad sequences]
\label{def:1}
Let ${\bm a}=(a_n)_{n\in\mathbb Z_+}\subseteq\mathbb Z$ and $p\in[1, \infty]$ be given. 
\begin{enumerate}[label*={\arabic*.},  itemsep=5pt]
\item We say that ${\bm a}=(a_n)_{n\in\mathbb Z_+}$ is
\emph{universally $L^p$-good} if for every probability space
$(X,\mathcal B(X),\mu)$, every invertible measure-preserving
transformation $T:X\to X$, and every $f\in L^p(X)$, the averages
$A_{N;T}^{{\bm a}}f(x)$ converge for $\mu$-almost every
$x\in X$.

\item We say that ${\bm a}=(a_n)_{n\in\mathbb Z_+}$ is
\emph{universally $L^p$-bad} if for every probability space
$(X,\mathcal B(X),\mu)$, and every aperiodic invertible
measure-preserving transformation $T:X\to X$, there exists a function
$f\in L^p(X)$ such that the averages $A_{N;T}^{{\bm a}}f(x)$ diverge
on a set of positive measure in $X$.
\end{enumerate}
\end{definition}
In view of this definition, we immediately see that the sequence of integers $(n)_{n\in\mathbb Z_+}$ is universally $L^p$-good for every $p\in[1,\infty]$ by the Birkhoff pointwise ergodic theorem~\cite{Bir}. At the other extreme, it follows from Bellow's work~\cite{Bellow} that every lacunary sequence of integers is universally $L^p$-bad for every $p\in[1,\infty)$.
It was also shown in~\cite{Bellow2} that lacunary sequences satisfy the so-called strong sweeping out property, which, in particular, implies that pointwise convergence fails even for bounded functions.

Using the terminology from Definition~\ref{def:1}, we can now state the main result of the present work.

\begin{theorem}
\label{thm:main}
If $c\in(1, 8/7)$, then  $\big(\big\lfloor\exp\big((\log n)^c\big)\big\rfloor\big)_{n\in\mathbb Z_+}$ is universally $L^p$-good for every $p\in(1, \infty]$.
\end{theorem}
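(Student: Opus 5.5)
By the Calderón transference principle, the theorem reduces to the integer shift on $\mathbb Z$. There it suffices to establish two facts. The first is the maximal inequality $\|\sup_N |A_N^{\bm a} f|\|_{\ell^p(\mathbb Z)}\lesssim_p \|f\|_{\ell^p(\mathbb Z)}$ for $p\in(1,\infty]$. The second is a variational or oscillation estimate along lacunary times $N\in\{2^k\}$ (or $\lambda^k$ with $\lambda>1$ close to $1$), say $\|V^r(A_{2^k}^{\bm a}f:k)\|_{\ell^p}\lesssim\|f\|_{\ell^p}$ for some $r>2$. Pointwise convergence on a dense class then follows, and one passes to general $N$ by the standard sandwich argument for positive averages with $\lambda\to1$.

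Write $a_n=\lfloor \varphi(n)\rfloor$ with $\varphi(x)=\exp((\log x)^c)$, and set $\psi=\varphi^{-1}$, so $\psi(y)=\exp((\log y)^{1/c})$. Since $\varphi$ is increasing and convex, the sequence $a_n$ is injective for large $n$. Hence $A_N^{\bm a}$ is a weighted average over $m\in[1,\varphi(N)]$, with weights $\mathds 1_{\{\lfloor\varphi(n)\rfloor=m\}}$ and indicator $\mathds 1[\lfloor -\psi(m+1)\rfloor... ]$. The model is the average with density $\psi'(m)$, i.e.
\[
M_Nf(x)=\frac1N\sum_{m\le \varphi(N)}\psi'(m)f(x-m),
\]
a smooth radial-type weight. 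Its maximal and variational bounds follow from Bourgain--Lépingle-type results for averages with slowly varying densities, by comparison with dyadic averages. The key step is a one-frequency circle method decomposition with a single major arc at $\xi=0$. The plan is to show that for every $\xi$ with $N^{-?}\lesssim\|\xi\|$,
\[
\Big|\frac1N\sum_{n\le N}e(\xi\lfloor\varphi(n)\rfloor)-\frac1N\sum_{m\le\varphi(N)}\psi'(m)e(\xi m)\Big|\lesssim (\log N)^{-A}.
\]
Here a logarithmic saving is enough, since we are summing over dyadic $N$ only, and a square-function or Rademacher--Menshov argument absorbs $\sum_k k^{-A}$ in $\ell^2$. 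The $\ell^p$ bounds then follow by interpolating this $\ell^2$ error with trivial $\ell^1$/$\ell^\infty$ bounds. Near $\xi=0$, both multipliers are close to the same smooth bump; we show this by summation by parts using $\psi'$.

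Removing the floor function is standard. We write $e(\xi\lfloor\varphi(n)\rfloor)=e(\xi\varphi(n))e(-\xi\{\varphi(n)\})$ and expand the second factor in a truncated Fourier series in $\{\varphi(n)\}$ (Vaaler). This reduces matters to bounding exponential sums
\[
S=\sum_{n\sim N}e\big((\xi+h)\varphi(n)\big),\qquad |h|\le H=(\log N)^{B},
\]
with a saving of $(\log N)^{-A}$, uniformly in real frequencies $\theta=\xi+h$ with $\|\theta\|$ not tiny. When $\theta\varphi'$ is small on the range, we use van der Corput / summation by parts to recover the main term. Otherwise we need Vinogradov's method. Taylor-expanding $\varphi$ on short blocks $n=n_0+t$ with $t\le L$, the coefficients $\theta\varphi^{(j)}(n_0)/j!$ behave like $\theta\varphi(n_0)(\log n_0)^{j(c-1)}/n_0^j$, up to constants. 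This is the step I expect to be the main obstacle. Since $\varphi$ grows faster than any polynomial, the Taylor polynomial must have degree $k\approx (\log N)^{c}/\log L$-ish, growing with $N$. We therefore need Vinogradov mean value bounds with explicit dependence on $k$, in the classical form $J_{s,k}(P)\le C^{k^2\log k}P^{2s-k(k+1)/2+\eta}$ with explicit constants (Vinogradov--Korobov style, as in zero-free region estimates), rather than the decoupling or efficient congruencing results, whose constants are uncontrolled. One then chooses a coefficient $j$ of size comparable to $k$ with $\theta\varphi^{(j)}(n_0)/j!$ in the range $[L^{-j+\epsilon},L^{-\epsilon}]$, and balances $L$, $k$, $s$ against the growth of $\varphi$. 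The constraint that the losses $C^{k^2\log k}$ remain below the gain should produce exactly the range $c<8/7$. Finally, we sum the block estimates over $n_0\sim N$ and combine with the circle method decomposition above.
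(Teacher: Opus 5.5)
\textbf{Overall.} Your architecture matches the paper's: Calder\'on transference, a one-frequency circle method with a single major arc at $0$, floor removal by a truncated Fourier expansion, comparison with the $\psi'$-weighted Hardy--Littlewood averages, and Vinogradov's method with an explicit-constant mean value theorem. Working only along lacunary times with a $(\log N)^{-A}$ saving and then sandwiching by positivity is a legitimate, lighter route to Theorem~\ref{thm:main}. It just does not give the paper's uniform $V^r$ bound over all $N$, which the paper gets from the times $2^{k^{\tau}}$ and the saving $e^{-(\log N)^{\chi}}$.

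\textbf{The gap.} It is the step you flag yourself, and what you sketch for it fails exactly where it is needed.

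\emph{The minor arc reaches polynomial $F$.} Your major arc only reaches $\xi\varphi'(N)\lesssim 1$. So the minor arc must handle $F\coloneqq\xi\varphi(N)$ as small as $N^{1-o(1)}$; the threshold is superpolynomially small, not of the form $N^{-O(1)}$.

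\emph{Derivative bounds hold only for small $j$.} On $(N,2N]$ one has $\xi\varphi^{(j)}(x)/j! = F N^{-j}e^{O((\log N)^{c-1})}$ only for $j\le c(\log N)^{c-1}$. Beyond that there is merely an upper bound, so your heuristic with the factor $(\log n_0)^{j(c-1)}$ is unavailable for large $j$.

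\emph{The window admits too few $j$.} With shift/block length $L$, the window $[L^{-j+\epsilon},L^{-\epsilon}]$ contains the $j$-th coefficient only for $\log F/\log N<j\lesssim \log F/\log(N/L)$. There are two cases.
\begin{itemize}
\item If $N/L\ge N^{\eta}$ (Karatsuba's setting), this is a bounded set of $j$ when $F=N^{O(1)}$. Meanwhile your degree $k\approx(\log N)^c/\log(N/L)$, which the remainder $\approx F(L/N)^{k}$ forces at the top of the range, tends to infinity.
\item If $N/L$ is subpolynomial, then $k\gg(\log N)^{c-1}$, and again no admissible $j$ is comparable to $k$.
\end{itemize}
This is precisely the obstruction in Remark~\ref{rem:1}(4). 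So ``choose $j$ comparable to $k$ in the window and balance'' does not produce the estimate.

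\textbf{What the paper does instead.} The paper works in the range $N^{\rho}\le F\le \varphi(N)^{(c-1)/4}$; above it, Proposition~\ref{prop:BP} applies. In the paper's notation (degree $n$, moment parameter $k$) the choices are as follows.
\begin{itemize}
\item The shift is $P=\lfloor N^{1/2}e^{-(\log N)^{\tau}}\rfloor$ with $\tau=2-c-\delta/2$, so $N/P^2$ is subpolynomial.
\item The Taylor remainder then forces $n\approx 10(\log N)^{2c-2+\delta}$: with $n=(\log N)^{\sigma}$ one needs $\tau>c-\sigma$. One also takes $m\approx n\log\log N$ and $k=2mn$.
\item Instead of one coefficient, one uses all $j\in I=\{j:\log F/\log(N/P)<j\le c(\log N)^{c-1}\}$. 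On $I$ the term $|B_j|^{-1}P^{-2j}\approx F^{-1}(N/P^2)^j$ dominates, and $|I|\gtrsim(\log N)^{c-1}$.
\end{itemize}
This gives $\prod_{j\in I}\big(|B_j|+|B_j|^{-1}P^{-2j}\big)\le e^{100(\log N)^{2c-2+\tau}}F^{-|I|}\le e^{-\rho(\log N)^{c}/4}$, which requires $\tau<2-c$. The saving survives the power $1/(4k^2)$, with $k^2\approx(\log N)^{8c-8+4\delta}(\log\log N)^2$, exactly when $c<8/7$.

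The restriction therefore does not come from the mean value constant, as you guessed: $2^{4km}$ costs only a factor $2^{2m/k}\le 2$. Until you fix the shift, the degree and the set of coefficients used at these low frequencies, and check the resulting constraints, the proposal does not prove the theorem.
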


To the best of the authors' knowledge, this is the first explicit and deterministic
example of a sequence of intermediate growth, that is, growing faster
than any polynomial but slower than any exponential, that is
universally $L^p$-good for every
$p\in(1,\infty]$. Theorem~\ref{thm:main} gives an affirmative answer
to an open problem dating back to the mid 1980s. Before discussing how the aforementioned theorem advances Bellow's program, let us briefly
review the current state of the art. Bellow's question~\cite{Bel} on
the characterization of $L^p$-goodness and $L^p$-badness initiated a
systematic and successful study of a wide variety of sequences,
leading to a number of fundamental examples of universally $L^p$-good
and $L^p$-bad sequences:
\begin{enumerate}[label*={\arabic*.},  itemsep=3pt]

\item If $P\colon\mathbb Z\to\mathbb Z$ is a polynomial with integer coefficients, then the sequence $(P(n))_{n\in\mathbb Z_+}$ is universally $L^p$-good for every $p\in(1,\infty]$. This follows from a series of groundbreaking papers by Bourgain~\cite{B1,B2,B3} from the mid-to-late 1980s, which introduced powerful tools from harmonic analysis, analytic number theory, and probability theory into the study of universal $L^p$-goodness.

\item The sequence of prime numbers is universally $L^p$-good for every $p\in(1,\infty]$, as was established by Bourgain~\cite{B3} and the third author~\cite{Wierdl}. Sequences of polynomial values over the primes were proved to be universally $L^p$-good for every $p\in(1,\infty]$ by Nair~\cite{Nair}; see also \cite{Trojan, Mes}.

\item If $P\colon\mathbb R\to\mathbb R$ is a polynomial with real coefficients, then the sequence $(\lfloor P(n)\rfloor)_{n\in\mathbb Z_+}$ is universally $L^p$-good for every $p\in(1,\infty]$, as was also established by Bourgain in~\cite{B3}. Bourgain's papers~\cite{B1,B2,B3} not only answered Bellow's question in the affirmative but also secured the role of the Calder{\'o}n transference principle~\cite{C1} as a bridge between pointwise ergodic theorems and the study of discrete analogues in harmonic analysis, thereby bringing a new perspective to Bellow's program.

\item The sequence $(\lfloor n^c\rfloor)_{n\in\mathbb Z_+}$, with
non-integer $c>1$, is universally $L^p$-good for every
$p\in(1,\infty]$, as was shown by the third author in his PhD
thesis~\cite{Wierdlphd} in the late 1980s. For
$c\in(0,1)$, the sequence
$(\lfloor n^c\rfloor)_{n\in\mathbb Z_+}$ is universally $L^1$-good,
whereas $(\lfloor (\log n)^c\rfloor)_{n\in\mathbb Z_+}$ is universally
bad even for bounded functions for every $c>0$, see \cite[Example~2.18]{JW}.

\item It is striking that arithmetic properties can play a decisive role here. On the one
hand, the third author~\cite{Wierdlphd} showed that the sequence
$(\lfloor n\log n\rfloor)_{n\in\mathbb Z_+}$ is universally $L^p$-good
for every $p\in(1,\infty]$, while the sequence
$(n\lfloor\log n\rfloor)_{n\in\mathbb Z_+}$ is universally bad even
for bounded functions. On the other hand, the first author \cite{brpoly} recently
showed that the sequence
$(n\lfloor n\sqrt{q}\rfloor)_{n\in\mathbb Z_+}$ is universally
$L^2$-good for every
$q\in\mathbb Q_{+}$, leaving open the question  for $p<2$.

\item In the context of such questions, the third author's PhD thesis~\cite{Wierdlphd} initiated the systematic study of sequences of the form $(\lfloor h(n)\rfloor)_{n\in\mathbb Z_+}$, where $h$ belongs to a Hardy field, that is, a field of germs at infinity of real-valued functions that is closed under differentiation. This line of research led to two works of the same author with Boshernitzan~\cite{Bowie} and with Boshernitzan, Kolesnik and Quas~\cite{BKQW}, which studied Hardy field functions of polynomial growth and provided a variety of sufficient conditions for $L^p$-goodness and $L^p$-badness in terms of their growth at infinity and their distance from polynomials. An important message conveyed by these papers is that $L^p$-goodness and $L^p$-badness are delicate phenomena that may vary considerably from one sequence to another. This was illustrated by two examples:
\smallskip
\begin{itemize}[itemsep=3pt]
\item[(i)] The sequence $(\lfloor n^k+\log n\rfloor)_{n\in\mathbb Z_+}$ is universally $L^2$-bad for any $k\in\mathbb Z_+$.

\item[(ii)]
The sequence $(\lfloor \sqrt{2}n^2+n+\log n\rfloor)_{n\in\mathbb Z_+}$ is universally $L^p$-good for every $p\in(1,\infty]$, while, for some $\theta\in\mathbb R\setminus\mathbb Q$, the sequence $(\lfloor \theta n^2+n+\log n\rfloor)_{n\in\mathbb Z_+}$ may fail to be universally $L^p$-good. It remains open, for example, whether the sequence $(\lfloor \pi n^2+n+\log n\rfloor)_{n\in\mathbb Z_+}$ is universally $L^p$-good for every $p\in(1,\infty]$, which is rather interesting.
\end{itemize}

\item An important class of sequences studied in~\cite{BKQW, Bowie} that are universally $L^p$-good for every $p\in(1,\infty]$ consists of sequences of the form $(\lfloor n^cL(n)\rfloor)_{n\in\mathbb Z_+}$ for a broad class of slowly varying functions $L$. In particular, one may take
\begin{align}
\label{eq:27}
L(x)=(\log x)^A,\qquad
L(x)=e^{A(\log x)^B},\qquad
L(x)=\underbrace{\log\circ\cdots\circ\log}_{m\text{ times}}x,
\end{align}
where $A\in\mathbb R$ when $c>1$, while $A\in\mathbb R_+$ when $c=1$, and $B\in(0,1)$ and $m\in\mathbb Z_+$ in both cases.
\end{enumerate}

Bellow's program has flourished for more than four decades, connecting
different fields and revealing new phenomena, and the examples above
illustrate only a small part of the rich and extensive literature that emerged from this program. For a comprehensive discussion on
the subject and the related literature, we refer the reader to~\cite{BKQW, Bowie,RW}. In view of the aforementioned results, in 1991 Rosenblatt together with the third author conjectured  that there
are no sequences ${\bm a}=(a_n)_{n\in\mathbb Z_+}\subseteq\mathbb Z$
with gaps tending to infinity, that is,
$\lim_{n\to\infty}(a_{n+1}-a_n)=\infty$, that are universally
$L^1$-good, see \cite[Conjecture~4.1, p.~71]{RW}. This was disproven in the early 2000s by 
Urban and Zienkiewicz~\cite{UZ} by showing that the sequence
$(\lfloor n^c\rfloor)_{n\in\mathbb Z_+}$ is universally $L^1$-good for
$1<c<1.001$.

In the following years, several sparse sequences were shown to be $L^1$-good
and $L^1$-bad. More precisely:
\begin{enumerate}[label*={\arabic*.},  itemsep=3pt]

\item The second author~\cite{Mirek} showed that the sequences
$(\lfloor n^cL(n)\rfloor)_{n\in\mathbb Z_+}$, with slowly varying functions $L$ as
in~\eqref{eq:27}, are universally
$L^1$-good for $1<c<30/29$, providing further counterexamples to the
Rosenblatt--Wierdl conjecture. As far as we know, the case $c=1$
remains open. The first author~\cite{WT11LD} extended the aforementioned result for a wide class of thin subsets of the integers exhibiting similar
arithmetic features.

\item The sequence $(n^k)_{n\in\mathbb Z_+}$ is universally $L^1$-bad
for every integer $k\ge2$, the case $k=2$ is due to Buczolich and
Mauldin~\cite{BM}, while the general case is due to
LaVictoire~\cite{LaV1}. It remains open whether $(P(n))_{n\in\mathbb Z_+}$ is universally $L^1$-bad for every nonlinear polynomial $P$ with integer coefficients. On the other hand, a result of Christ~\cite{C} asserts that for every fixed $k\in\mathbb Z_+$, there exists a universally $L^1$-good sequence ${\bm a}=(a_n)_{n\in\mathbb Z_+}$ with $\frac{a_n}{n^k}$ bounded above and below by positive constants. In other words, there exist universally $L^1$-good sequences of every polynomial growth, see also Remark~3.1 in the aforementioned work.

\item Let us also note that the sequence of prime numbers is also universally $L^1$-bad,
see \cite{LaV1}.
\end{enumerate}

To the best of our knowledge, every known example  of a deterministic universally $L^p$-good sequence ${\bm a}=(a_n)_{n\in\mathbb Z_+}$ prior to the present work is of at most polynomial growth; that is, there exists $k\in\mathbb Z_+$ such that
\[
\lim_{n\to\infty}\frac{a_n}{n^k}=0.
\]

In contrast, $L^p$-badness extends well into the regime of intermediate growth. Recently, Mondal, Roy, and the third author~\cite{MW}, extending earlier results from \cite{JW}, proved that sequences ${\bm a}=(a_n)_{n\in\mathbb Z_+}$ satisfying the sublacunarity condition
\begin{equation}\label{growthcond}
\frac{a_{n+1}}{a_n}\ge 1+\frac{1}{(\log\log n)^{\theta}}
\quad\text{for some} \quad \theta\in(0,1),
\end{equation}
are strong sweeping out, and in particular, universally $L^p$-bad for every $p\in[1,\infty]$.

Clearly, there is a vast gap between the polynomial growth of all known examples of universally $L^p$-good sequences and the obstructions of the form \eqref{growthcond} appearing in the literature. The first attempts to close this gap and obtain an $L^p$-goodness result beyond the polynomial regime go back to the third author's PhD thesis~\cite{Wierdlphd}, where the sequence ${\bm a}_c\coloneqq \big(\big\lfloor\exp\big((\log n)^c\big)\big\rfloor\big)_{n\in\mathbb Z_+}$, for $c\in(1,3/2)$, was studied. Karatsuba's earlier work~\cite{Kar} on the equidistribution of the fractional parts associated with such sequences makes this a natural candidate to consider. In the context of establishing pointwise convergence of the corresponding ergodic averages, however, Karatsuba's estimates alone proved insufficient. Nevertheless, using the available estimates, the third author was able to show that for every $c\in(1,3/2)$ one can construct a strictly increasing sequence $(N_k)_{k\in\mathbb Z_+}\subseteq\mathbb Z_+$ such that $A_{N_k;T}^{{\bm a}_c}f$ converges pointwise almost everywhere on $X$ for every $f\in L^p(X)$ with $p\in(1,\infty]$.

Our considerations leading to Theorem~\ref{thm:main} begin from Karatsuba's exponential-sum estimates~\cite{Kar} and, at the same time, demonstrate that new input in estimating the corresponding exponential sums is required to establish pointwise convergence for such sequences; see also the discussion following Proposition~\ref{expsumestfull}.

We obtain Theorem~\ref{thm:main} as a corollary of the following quantitative pointwise ergodic theorem.

\begin{theorem}
\label{thm:mainq}
Fix $c\in(1,8/7)$ and let $h(x)\coloneqq h_c(x)\coloneqq\exp\big((\log x)^c\big)$. Assume that $(X,\mathcal{B}(X),\mu)$ is a $\sigma$-finite measure space endowed with an invertible measure-preserving transformation $T\colon X\to X$. Then, for every $p\in(1,\infty)$ and every $f\in L^p(X)$, the averages
\begin{align}
\label{eq:28}
A_{N;T}^{\lfloor h\rfloor}f(x)
\coloneqq \frac{1}{N}\sum_{n=1}^N
f\big(T^{\lfloor e^{(\log n)^c}\rfloor}x\big),
\quad x\in X,
\end{align}
converge pointwise $\mu$-almost everywhere on $X$ as $N\to\infty$. Moreover, for all $p\in(1,\infty)$ and $r\in(2,\infty)$, there exists a  constant $C=C(p,c)\in\mathbb R_+$ such that for every $f\in L^p(X)$ we have
\begin{align}
\label{eq:29}
\big\|V^r(A_{N;T}^{\lfloor h\rfloor}f:N\in\mathbb{Z}_+)\big\|_{L^p(X)}
\le \frac{Cr}{r-2}\|f\|_{L^p(X)},
\end{align}
where $V^r$ denotes the $r$-variational seminorm defined in \eqref{eq:18}.
\end{theorem}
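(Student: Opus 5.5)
The overall plan is to transfer the problem to $\mathbb Z$ and split the averaging operator into a "one-frequency" main term plus an error term. By the Calder\'on transference principle, it suffices to prove the variational estimate \eqref{eq:29} on $\mathbb Z$ with the shift, for the averages
\[
A_N f(x)=\frac1N\sum_{n=1}^N f(x-\lfloor h(n)\rfloor).
\]
Pointwise convergence then follows from the finiteness of $V^r$ for $f$ in the dense class $L^1\cap L^\infty$, together with the maximal inequality implied by \eqref{eq:29}.

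\textbf{Step 1: dyadic reduction.} I first replace $N$ by lacunary scales. The long variation is taken over $N\in\{2^k\}$, or better a slightly denser set $N_k=\lfloor 2^{k^\tau}\rfloor$. The short variations inside each block are handled by the square-function bound
\[
\Big(\sum_k V^2(A_N f: N\in[N_k,N_{k+1}])^2\Big)^{1/2},
\]
which can be controlled trivially for $p=2$ once the multiplier comparison below is in place. I will then interpolate with $\ell^p$ maximal bounds.

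\textbf{Step 2: comparison with a continuous model.} Since $h$ has intermediate growth, it has a smooth inverse $\varphi=h^{-1}$, with $\varphi(y)=\exp((\log y)^{1/c})$, and $\varphi'$ is slowly varying. Note that $m\in\{\lfloor h(n)\rfloor\}$ iff $\lfloor-\varphi(m)\rfloor-\lfloor-\varphi(m+1)\rfloor=1$. Accordingly, I write
\[
A_Nf=\frac1N\sum_{m\le h(N)}\varphi'(m)f(x-m)+E_Nf,
\]
where $E_N$ is built from sawtooth functions $\psi(-\varphi(m))-\psi(-\varphi(m+1))$. The main term is a positive average with slowly varying weights. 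Its $V^r$ bound on $L^p$ follows from the variational Birkhoff inequality (Jones--Kaufman--Rosenblatt--Wierdl) after summation by parts, because $\varphi'$ is monotone. For $E_N$ I will prove
\[
\|E_{N_k}\|_{\ell^2\to\ell^2}\lesssim (\log N_k)^{-2},
\]
which is summable in $k$ and so bounds both the maximal function and the variation along $N_k$. To get the $\ell^p$ bound I interpolate this with the trivial $\ell^1$ and $\ell^\infty$ bounds $O(1)$. This requires the $\ell^2$ gain to beat the number of scales, which holds with a power-of-log decay because $N_k$ are lacunary. The density of $N_k$ must be chosen so that the short variations between $N_k$ and $N_{k+1}$ are controlled by the main term plus the errors $E$.

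\textbf{Step 3: the exponential sum (main obstacle).} The $\ell^2$ bound on $E_N$ reduces, via Plancherel and Vaaler's approximation of $\psi$, to the estimate
\[
\sup_{\xi\in\mathbb T}\Big|\sum_{n\le N} e\big(\xi\lfloor h(n)\rfloor\big)-\text{(main term)}\Big|\lesssim N(\log N)^{-A},
\]
or equivalently to bounds for $\sum_{m\sim M} e(\xi m + j\varphi(m))$ uniform in $\xi$ and in $1\le |j|\le (\log N)^{C}$.

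This is where the one-frequency circle method enters. When $\xi$ is within roughly $h(N)^{-1}(\log N)^{B}$ of $0$, the sum is compared with the integral, giving the main term. When $\xi$ is elsewhere, I apply Vinogradov's method to the phase $\xi m+j\varphi(m)$ over short intervals $m\in[M,M+H]$. There I Taylor-expand $\varphi$ to order $k\approx(\log M)^{1-1/c}$ or similar, since its derivatives are $\varphi^{(r)}(M)\approx \varphi(M)(\log M)^{r(1/c-1)}\cdots M^{-r}$. I then use the Vinogradov mean value theorem with explicit dependence of the constants on $k$; the bound in the Bourgain--Demeter--Guth form is insufficient because of its constants, so the classical Vinogradov--Korobov bound with explicit constants is needed.

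The balance between the saving $\exp(-(\log N)^{\delta}/k^2)$ and the loss from the length of Taylor expansion is exactly what constrains $c<8/7$. This step is the main difficulty, and I would first try to follow Karatsuba's argument, tracking all parameters, and verify that the saving exceeds $(\log N)^{A}$ in that range.
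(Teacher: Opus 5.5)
Your outer architecture is the paper's: Calder\'on transference, the long/short variation split along $\lfloor 2^{k^{\tau}}\rfloor$, comparison of $A_N$ with the $\varphi'$-weighted averages through an $\ell^2$ multiplier bound interpolated against trivial $\ell^{q}$ bounds, and summation by parts plus Jones--Kaufman--Rosenblatt--Wierdl for the weighted averages. (For $p$ near $1$ the $\ell^2$ decay must beat every power of $\log N$, so $(\log N_k)^{-2}$ is not enough; the paper's $e^{-(\log N)^{\chi}}$ is.)

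The genuine gap is Step 3, which carries the whole theorem, and as proposed it fails.

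\textbf{The reduction in the $m$-variable is false.} The claimed reduction to $\sum_{m\sim M}e(\xi m+j\varphi(m))$ with $1\le|j|\le(\log N)^C$ does not hold, for two reasons.
\begin{enumerate}
\item For such small $j$ the weighted sums are already $O(\sqrt N)$ by the first- and second-derivative tests, for every $c>1$. That alone shows the reduction has lost the problem.
\item In the $m$-variable you sum over $m\le h(N)$. Truncating the sawtooth at height $J$ smears each jump of the indicator of $\{\lfloor h(n)\rfloor\}$ over about $h'(N)/J$ consecutive integers. Equivalently, after summing by parts, the constant term of Vaaler's majorant costs about $|\xi|h(N)/J$. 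This erases the signal at frequencies $|\xi|\gg J/h'(N)$, so you need $J\gtrsim|\xi|h(N)/N$, which is superpolynomial in $N$.
\end{enumerate}
For $j$ that large, $\xi m-j\varphi(m)$ has stationary points. Even optimal stationary-phase bounds, taken for each $j$ separately, sum to roughly $\sqrt{|\xi|h(N)}$, which exceeds $N$ once $|\xi|\gg N^2/h(N)$. You would have to exploit cancellation in $j$, which means going back to the $n$-variable.

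Vinogradov's method applied to $\varphi$ is also misdirected. The function $\varphi(m)=m^{(\log m)^{1/c-1}}$ has local degree tending to $0$, and its derivatives carry the factor $(\log M)^{1/c-1}$ once, not $r$ times. The method must be applied to $h$, whose local degree $(\log N)^{c-1}$ tends to infinity.

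The paper stays in the $n$-variable throughout. It expands $e(-\xi\{h(n)\})$ with truncation $M\approx h(N)^{(c-1)/4}$ and reduces both the main and error terms to $\sum_{n\le N}e(th(n))$ for $h(N)^{-1}N^{1/2}\le|t|\le h(N)^{(c-1)/2}$. Large $|t|$ is covered by Karatsuba and Br\"udern--Perelli.

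\textbf{Following Karatsuba is exactly what fails at small frequencies.} Even in the $n$-variable, ``follow Karatsuba's argument, tracking all parameters'' does not work. As Remark~\ref{rem:1} explains, Karatsuba's theorem cannot reach $|\xi|\approx h(N)^{-1}N^{\rho}$. The Br\"udern--Perelli range $|\xi|\ge h(N)^{-1+\varepsilon}$ is useless here, because the major-arc error is $O(|\xi|h(N)+1)$.

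The missing idea is Proposition~\ref{expsumestfull}. Write $F=\xi h(N)$. Since $|B_j|^{-1}P^{-2j}\approx F^{-1}(N/P^2)^j$, the ratio $N/P^2$ cannot be a power of $N$. The paper's choices are:
\begin{enumerate}
\item shift $P=\lfloor N^{1/2}e^{-(\log N)^{\tau}}\rfloor$ with $\tau=2-c-\delta/2$;
\item Taylor order $n\approx(\log N)^{2c-2+\delta}$, far beyond the local degree, which is forced by the remainder term once $P$ is this large;
\item $m\approx n\log\log N$ and $k=2mn$ in the classical mean value theorem with explicit constants;
\item the derivative set $I=\{j:\log F/\log(N/P)<j\le c(\log N)^{c-1}\}$, on which the lower bounds of Lemma~\ref{basicprop} make $|B_j|^{-1}P^{-2j}$ dominant.
\end{enumerate}
This gives $\prod_{j\in I}\big(|B_j|+|B_j|^{-1}P^{-2j}\big)\le e^{100(\log N)^{c-\delta/2}}F^{-|I|}\le e^{-\frac{\rho}{4}(\log N)^c}$. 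After raising to the power $1/(4k^2)$ with $k\approx(\log N)^{4c-4+2\delta}\log\log N$, a saving $e^{-(\log N)^{8-7c-O(\delta)}}$ survives. This computation, not a heuristic balance, is where $c<8/7$ comes from.

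It also shows your major arc $|\xi|\lesssim h(N)^{-1}(\log N)^B$ is too narrow. With $F$ only polylogarithmic, the gain $F^{-|I|}$ cannot beat the loss $e^{100(\log N)^{c-\delta/2}}$. You must start the minor arc at $F\ge N^{\rho}$, for example $|\xi|>h(N)^{-1}N^{1/2}$; the trivial major-arc bound $O(|\xi|h(N)+1)$ allows this.
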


A few comments  are now in order.

\begin{enumerate}[label*={\arabic*.},  itemsep=3pt]

\item It is clear that the $r$-variational inequality in \eqref{eq:29} implies that the sequence $\big(A_{N;T}^{\lfloor h\rfloor}f\big)_{N\in\mathbb Z_+}$ is Cauchy almost everywhere on $X$ and consequently proves Theorem~\ref{thm:main}.

\item In ergodic applications, finite measure spaces are of primary importance. However, by including $\sigma$-finite measure spaces in Theorem~\ref{thm:mainq}, the integer shift system arising in the Calder{\'o}n transference principle becomes available. This is the  reason why Theorem~\ref{thm:mainq} is formulated in this generality.

\item Inequality  \eqref{eq:29} yields the corresponding maximal estimate, namely, it implies that for all $p\in(1,\infty]$, there exists a  constant $C=C(p,c)\in\mathbb R_+$ such that, for every $f\in L^p(X)$, we have
\begin{align}
\label{eq:30}
\big\|\sup_{N\in\mathbb{Z}_+}\big|A_{N;T}^{\lfloor h\rfloor}f\big|\big\|_{L^p(X)}
\le C\|f\|_{L^p(X)}.
\end{align}

\item The range of the parameter $r\in(2,\infty)$ in inequality \eqref{eq:29} is sharp. With straightforward modifications, the proof also yields uniform $2$-oscillation and uniform $2$-jump estimates on $L^p(X)$ for all $p\in(1,\infty)$. We refer to \cite{MOE} for the relevant definitions of  $2$-oscillations and  $2$-jumps. Moreover, with no additional difficulty and by adapting the strategy of \cite{frpr}, see sections 1 and 2 therein, one may also establish multiparameter oscillation inequalities for the corresponding ergodic averages. 

\item The range of the parameter $p\in(1,\infty)$ in inequality \eqref{eq:29} is also sharp. However, a natural question that arises is whether the weak type $(1,1)$ estimate holds. To extend Theorem~\ref{thm:main} to $p=1$, it would suffice to establish a weak type $(1,1)$ estimate for the corresponding maximal function, as in \eqref{eq:30}.
This is an interesting question that would also  contribute an ``extreme'' counterexample of the Rosenblatt--Wierdl conjecture, going beyond \cite{C}, where the existence of arbitrarily polynomially sparse universally $L^1$-good sequences is established, and further showcase that sparseness and $L^1$-goodness are completely different phenomena. We plan to address this question in the near future.

\item We believe that the present approach can be adapted to accommodate more general sequences of intermediate growth. For instance, with minor modifications, it is well within the reach of the method to address orbits of the form
\[
\big(\big\lfloor n^k \exp\big((\log n)^c\big)+R(n)\big\rfloor\big)_{n\in\mathbb{Z}_+},
\]
where $k\in\mathbb{Z}_+$, $c\in(1,8/7)$, and  $R(x)$ is a function of one of the forms
\[
Ax^a\underbrace{\log\circ\ldots\circ\log x}_{m\text{ times}},
\qquad
Ax^a(\log x)^B,
\qquad
Ax^a e^{B(\log x)^b},
\]
with $a,A,B\in\mathbb{R}$, $m\in\mathbb{Z}_+$, and $b\in(0,1)$. However, for clarity of presentation, we restrict our attention to the sequence $\big(\big\lfloor\exp\big((\log n)^c\big)\big\rfloor\big)_{n\in\mathbb Z_+}$ and leave the details to the interested reader.

\item Extending Theorem~\ref{thm:mainq} to functions $h$ of the form $\exp(\rho(\log x))$, where $\rho(x)=x^cL(x)$ and $L$ is a slowly varying function such as those appearing in \eqref{eq:27}, would require establishing suitable derivative estimates of arbitrary order for such functions. This task would necessitate nontrivial extensions of \eqref{lowerboundder} and \eqref{upperboundder}; see also Section~8 of \cite{ApKar}. This remains open.
\end{enumerate}

\subsection{Proof strategy for Theorem~\ref{thm:mainq}}\label{Strategy}
In view of the remarks following Theorem~\ref{thm:mainq}, it suffices to prove inequality \eqref{eq:29}. By the Calder{\'o}n transference principle it suffices to establish the estimate for the integer shift system, that is, $(\mathbb{Z},\mathcal{B}(\mathbb{Z}),\mu_{\mathbb Z})$ equipped with the shift transformation $S\colon \mathbb{Z}\to \mathbb{Z}$, where $\mathcal{B}(\mathbb{Z})$ denotes the $\sigma$-algebra of all subsets of $\mathbb Z$, $\mu_{\mathbb Z}$ denotes the counting measure on $\mathbb Z$, and $S(x)\coloneqq x-1$ for every $x\in\mathbb Z$. Then the averages $A^{\lfloor h\rfloor}_{N;T}f$ from \eqref{eq:26} with the integer shift $T=S$ become
\begin{align}
\label{eq:32}
A^{\lfloor h\rfloor}_{N;\mathbb Z}f(x)
\coloneqq \frac{1}{N}\sum_{n=1}^N f(x-\lfloor h(n)\rfloor),
\qquad x\in\mathbb Z.
\end{align}
Here, we abuse notation and write $A^{\lfloor h\rfloor}_{N;\mathbb Z}$ instead of $A^{\lfloor h\rfloor}_{N;S}$ to emphasize that we are dealing with operators on the integers. It then suffices to prove that for all $p\in(1,\infty)$ and $r\in(2,\infty)$, there exists a constant $C=C(p,c)\in\mathbb R_+$ such that for every $f\in\ell^p(\mathbb Z)$ we have
\begin{align}
\label{eq:33}
\big\|V^r(A^{\lfloor h\rfloor}_{N;\mathbb Z}f:N\in\mathbb{Z}_+)\big\|_{\ell^p(\mathbb Z)}
\le \frac{Cr}{r-2}\|f\|_{\ell^p(\mathbb Z)}.
\end{align}

Once \eqref{eq:33} is established, the Calder{\'o}n transference
principle \cite{C1, Kosz} guarantees that \eqref{eq:29} also holds for
an arbitrary measure-preserving system. We emphasize that the
Calder{\'o}n transference principle does not transfer pointwise almost
everywhere convergence itself. It only transfers quantitative bounds,
such as the inequality in \eqref{eq:33}, from the integer shift system
to the corresponding ones in a $\sigma$-finite measure-preserving
system, as in \eqref{eq:29}, where they are then used to deduce
pointwise almost everywhere convergence. Hence, from now on, we may focus
on establishing inequality \eqref{eq:33}.

There are both advantages and disadvantages to this reduction, the latter including for example completely losing information about the original
measure-preserving system, and having to work with a space of infinite measure. The advantage is clear, we pass from an abstract
measure-preserving system to the integer shift system, the averaging operators \eqref{eq:32} are convolution operators and tools from harmonic analysis become available.

Applying the Fourier transform to the averages in \eqref{eq:32}, see Section~\ref{notation} for definitions, we obtain
\[
\mathcal{F}_{\mathbb{Z}}[A^{\lfloor h\rfloor}_{N;\mathbb Z}f](\xi)
=m^{\lfloor h\rfloor}_{N}(\xi)\cdot\mathcal{F}_{\mathbb{Z}}[f](\xi),
\qquad \xi\in\mathbb{T}\text{,}
\]
where the corresponding multiplier is the normalized exponential sum
\begin{equation}\label{genmultiplier}
m^{\lfloor h\rfloor}_{N}(\xi)\coloneqq\frac{1}{N}\sum_{n=1}^N e(\lfloor h(n)\rfloor\xi),
\end{equation}
and where we have used the notation $e(x)\coloneqq e^{2\pi i x}$ for any $x\in\mathbb{R}$. This simple
observation allows tools from harmonic analysis and number theory to
be employed. The key mechanism for understanding the multipliers \eqref{genmultiplier} is provided by the following proposition.
\begin{proposition}\label{expsumestfullforav}
For every $c\in(1,8/7)$ there exist  $\chi=\chi(c)\in\mathbb R_+$ and $C=C(c)\in\mathbb R_+$ such that for every $N\in\mathbb{Z}_+$ and $\xi\in\mathbb{T}$ we have
\begin{equation}\label{expsumconcrete}
\Big|\sum_{n=1}^Ne(\lfloor h(n)\rfloor\xi)-\sum_{ 2\le m\le h(N)}\varphi'(m)e(m\xi)\Big|\le CNe^{-(\log N)^{\chi}}\text{,}
\end{equation}
where $h(x)=\exp\big((\log x)^c\big)$ and $\varphi=h^{-1}$ is its compositional inverse.
\end{proposition}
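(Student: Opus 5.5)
The plan is to avoid working with the sum over $m$ directly. Instead, both sides of \eqref{expsumconcrete} will be expanded into exponential sums and integrals with smooth phase $\theta h(x)$, which are then compared frequency by frequency.

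\textbf{Step 1: Fourier expansion of the left side.} Write $e(\lfloor h(n)\rfloor\xi)=e(h(n)\xi)\,\Phi_\xi(\{h(n)\})$, where $\Phi_\xi(u)\coloneqq e(-u\xi)$ on $[0,1)$. Then expand $\Phi_\xi$ in its Fourier series,
\[
\Phi_\xi(u)=\sum_{j\in\mathbb Z}a_j(\xi)e(ju),\qquad a_j(\xi)=\frac{1-e(-\xi)}{2\pi i(j+\xi)}.
\]
Because $\Phi_\xi$ has a jump, I would truncate at $|j|\le J$ with $J=e^{(\log N)^{\chi'}}$ using a Vaaler-type (Fejér-smoothed) approximation. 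The truncation error is controlled by the standard device of bounding $\#\{n\le N:\|h(n)\|\le 1/J\}$, which again reduces to exponential sums of the same type.

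This expresses $\sum_{n\le N}e(\lfloor h(n)\rfloor\xi)$, up to admissible errors, as
\[
\sum_{|j|\le J}a_j(\xi)\sum_{n\le N}e\big((j+\xi)h(n)\big).
\]

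\textbf{Step 2: Fourier expansion of the right side.} On the right side, substitute $m=h(t)$ and apply Poisson summation / Euler–Maclaurin in $m$. The sum $\sum_{2\le m\le h(N)}\varphi'(m)e(m\xi)$ becomes, up to boundary terms, $\sum_{j}\int\varphi'(t)e((j+\xi)t)\,dt=\sum_j\int_{1}^{N}e((j+\xi)h(s))\,ds$. The weights $a_j(\xi)$ then arise after pairing the terms. Equivalently, one can note that the counting function $\#\{n:\lfloor h(n)\rfloor=m\}=\varphi'(m)+\psi(-\varphi(m))-\psi(-\varphi(m+1))+O(\varphi'')$ produces the same structure.

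So it suffices to prove, uniformly for $\theta=j+\xi$ with $|j|\le J$, that
\[
\Big|\sum_{P<n\le 2P}e(\theta h(n))-\int_P^{2P}e(\theta h(s))\,ds\Big|\lesssim P e^{-(\log P)^{\chi}}
\]
on dyadic blocks with $P\ge Ne^{-(\log N)^{\chi}}$. Smaller $n$ are handled trivially.

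\textbf{Step 3: the two regimes in $\theta$.}
\begin{itemize}
\item \emph{Small frequency.} If $|\theta|h'(2P)\le 1/2$, Euler–Maclaurin (or the Kusmin–Landau/truncated Poisson formula) shows the sum equals the integral up to $O(1)$.
\item \emph{Moderate frequency.} If $|\theta| h'(P)$ is at most $e^{(\log P)^{\chi}}$, van der Corput's $B$-process shows that both the sum and the integral are small.
\item \emph{Large frequency.} Otherwise, the integral is $O(1/(|\theta|h'(P)))$ by integration by parts. The real content is a bound
\[
\Big|\sum_{P<n\le 2P}e(\theta h(n))\Big|\lesssim Pe^{-(\log P)^{\chi}},
\]
uniform in $\theta$ with $|\theta|\ge e^{(\log P)^{\chi}}/h'(P)$ and $|\theta|\le J+1$.
\end{itemize}

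\textbf{Main obstacle.} The large-frequency bound is where I expect all the difficulty to lie. I would prove it by Vinogradov's method. Taylor-expand $\theta h(n+u)$ to order $k$, chosen so that $\theta h^{(k+1)}(P)P^{k+1}/(k+1)!$ is small. Because of the intermediate growth, $k$ must grow like a power of $\log P$, roughly $(\log P)^{c-1}$ times powers of the logarithm of $\theta h$. This requires
\begin{itemize}
\item precise two-sided estimates for all derivatives $h^{(r)}$ (the bounds \eqref{lowerboundder}, \eqref{upperboundder} alluded to earlier), so that the Taylor coefficients $\theta h^{(r)}(n)/r!$ are controlled simultaneously for all $r\le k$; and
\item the classical Vinogradov mean value theorem with explicit dependence on $k$ and on the number of variables, since $k\to\infty$ with $P$.
\end{itemize}
Balancing the loss in the mean value theorem, which is polynomial in $k^{k}$-type quantities, against the gain from the top-order coefficients is what should force $c<8/7$. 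I would first attempt this balance with Karatsuba's explicit form of the mean value theorem and optimise the exponent $\chi$ at the end.
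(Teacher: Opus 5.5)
Your outer reduction is close to the paper's. You expand $e(-\xi\{h(n)\})$ in a truncated Fourier series and reduce to sums $\sum e(\theta h(n))$ with $\theta=j+\xi$. The paper instead splits at $\|\xi\|=h(N)^{-1}N^{1/2}$ and shows that both sides are small on the minor arc, which avoids any frequency-by-frequency matching.

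One bookkeeping correction. On the right-hand side, Poisson summation produces unit weights, not $a_j(\xi)$, so nothing ``pairs''. This is harmless, for four reasons:
\begin{itemize}
\item $\sum_{2\le m\le h(N)}\varphi'(m)e(m\xi)=\int_1^N e(\xi h(s))\,ds+O(N^{1/2})$ for every $\xi$. Use Euler--Maclaurin if $|\xi|\le N^{-1/2}$, and summation by parts plus the second mean value theorem otherwise.
\item $|a_0(\xi)-1|\lesssim|\xi|$.
\item $|\int_1^N e(\xi h(s))\,ds|\lesssim\min\{N,|\xi|^{-1}\}$.
\item $|a_j(\xi)\int_1^N e((j+\xi)h(s))\,ds|\lesssim|j|^{-1}$ for $j\neq0$.
\end{itemize}

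It also helps to locate the difficulty exactly. Every $\theta=j+\xi$ with $j\neq0$ has $|\theta|\ge1/2$, so it is already covered by Karatsuba's bound (Proposition~\ref{prop:BP}). Your Kusmin--Landau and $B$-process steps are fine; on a block $(N,2N]$, second-derivative bounds even reach $\xi h(N)\approx N^{2-\epsilon}$. What remains is $j=0$ with $F=\xi h(N)$ satisfying $N^{2-\epsilon}\lesssim F\le h(N)^{\epsilon}$.

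That range is exactly Proposition~\ref{expsumestfull}, the technical heart of the paper, and this is the genuine gap. Your proposal names Vinogradov's method and guesses that some balance forces $c<8/7$, but it does not supply the argument, and several features of the sketch are off. In the paper's notation (block $(N,2N]$, shift parameter $P$):

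\begin{enumerate}
\item \textbf{The shift must be nearly $N^{1/2}$.} The Taylor expansion is in the shift $yz$ with $y,z\le P\le N^{1/2}$, not in a variable of block length, and the size of $P$ is the whole issue. The factor $|B_j|^{-1}P^{-2j}\approx F^{-1}(N/P^2)^j$ in $\Delta_I$ is small for $j$ up to $c(\log N)^{c-1}$ only if $N/P^2$ is subpolynomial. The paper therefore takes $P=\lfloor N^{1/2}e^{-(\log N)^{\tau}}\rfloor$ with $\tau=2-c-\delta/2$ and $\delta=(8-7c)/16$. Karatsuba's polynomially smaller shift gives nothing here (Remark~\ref{rem:1}).

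\item \textbf{The saving comes from low-order coefficients, not top-order ones.} The near-maximal shift forces the Taylor order up to $n\approx(\log N)^{2c-2+\delta}$. This is far beyond $c(\log N)^{c-1}$, the only range where the lower bound \eqref{lowerboundder} is available. So you cannot control all Taylor coefficients from both sides. The saving comes from the low block $I=\{j:\log F/\log(N/P)<j\le c(\log N)^{c-1}\}$, which gives $\prod_{j\in I}(|B_j|+|B_j|^{-1}P^{-2j})\le e^{-\rho(\log N)^c/4}$ whenever $F\ge N^{\rho}$.

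\item \textbf{The mean value parameters are forced, and they produce $c<8/7$.} One needs $m\approx n\log\log N$ and $k=2mn$, both for the hypothesis $P\ge n^{n(1-1/n)^{-m}}$ of Theorem~\ref{VMVT} and to keep the mean-value loss $P^{n(n+1)(1-1/n)^m/(4k^2)}$ bounded. The surviving saving is $\exp(-\rho(\log N)^c/(16k^2))$ with $k^2\approx(\log N)^{8c-8+4\delta}(\log\log N)^2$, and this is exactly where $c<8/7$ enters.
\end{enumerate}

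Without these choices of shift, Taylor order, the set $I$, and $(m,k)$, the decisive estimate is asserted rather than proved.
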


Proposition~\ref{expsumestfullforav} determines the proof strategy for inequality \eqref{eq:33}. Standard considerations, essentially relying on simple square function arguments, interpolation and \eqref{expsumconcrete}, reduce the $r$-variational estimates for $A^{\lfloor h\rfloor}_{N;\mathbb{Z}}$ to the analogous ones for the following much more well-behaving averaging operators
\begin{align}
\label{eq:34}
A^{\rm n}_{N;\varphi',\mathbb Z}f(x)
\coloneqq\frac{1}{N}\sum_{2\le m\le h(N)}\varphi'(m)f(x-m)\text{.}
\end{align}
In fact, the above averages  are smoothly-weighted discrete Hardy--Littlewood averaging operators, and their $r$-variational estimates follow from the corresponding ones for the classical discrete Hardy--Littlewood ones, see Section~\ref{sectionOrbit}. Such maneuvers are fairly standard, and the intermediate growth of the sequence does not substantially complicate things here, so let us focus on the proof of Proposition~$\ref{expsumestfullforav}$.

The exponential sum estimate \eqref{expsumconcrete} requires employing the so-called one-frequency circle method, in which the major arc consists of a single narrow arc centered at the origin, while its complement forms the minor arc. This contrasts sharply with the classical circle method, which addresses genuinely  multifrequency phenomena, and where the major arcs consist of many narrow arcs centered at rationals with relatively small denominators in terms of the underlying scale $N$. The one-frequency circle method is typical for exponential sums with floor-non-polynomial phase functions and makes the major arc analysis straightforward.

For the major arc regime, elementary estimates for the difference on the left-hand side of \eqref{expsumconcrete}, explained in Section~\ref{CircleMethod} (see also Remark~\ref{rem:1}), yield an error term of order $O(|\xi|h(N)+1)$ whenever $\xi\in[-1/2,1/2)$. Thus, any restriction of the form $|\xi|\le h(N)^{-1}N^\rho$, where $\rho\in(0,1)$, yields the desired decay in \eqref{expsumconcrete}. However, the region $|\xi|\le h(N)^{-1}N^\rho$ becomes extremely narrow since $h(N)$ has intermediate growth and, in particular, grows faster than any polynomial. Nevertheless, for the argument to work, the major arc must be defined essentially in this way. The analysis is then reduced to studying each exponential sum in \eqref{expsumconcrete} separately over the exceptionally wide complementary minor arc.

The second exponential sum in \eqref{expsumconcrete} satisfies the desired bounds for $|\xi|\ge h(N)^{-1}N^\rho$. This follows by estimating the appropriate geometric sum after removing the weight $\varphi'$ by summation by parts. Obtaining the desired bounds for the first exponential sum in \eqref{expsumconcrete} is considerably more challenging. A key intermediate result which may be of independent interest is the following exponential sum estimate.
\begin{proposition}
\label{expsumestfull}
Let $c\in(1,8/7)$, $\rho\in(0,\infty)$ and $\theta\in(0,c-1)$, and  let $h(x)\coloneqq\exp\big((\log x)^c\big)$. Then there exist  $\chi=\chi(c)\in\mathbb R_+$ and $C=C(c,\rho,\theta)\in\mathbb R_+$ such that for every
$N\in\mathbb{Z}_+$ and $\xi\in\mathbb{R}$ with
$h(N)^{-1}N^{\rho}\le|\xi|\le h(N)^{\theta}$ we have
\begin{equation}\label{expsumestbasicdificult}
\Big|\sum_{N<n\le 2N}e\big(h(n)\xi\big)\Big|\le CNe^{-(\log N)^{\chi}}\text{.}
\end{equation}
\end{proposition}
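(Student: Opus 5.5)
We split the range $h(N)^{-1}N^{\rho}\le|\xi|\le h(N)^{\theta}$ into a \emph{small-frequency} regime and a \emph{large-frequency} regime. In the first regime $F(x)=h(x)\xi$ has controlled low-order derivatives and we use van der Corput. In the second, all derivatives of $F$ up to a large order $k$ are huge but still regular, and we use Vinogradov's method with explicit parameter dependence.

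\textbf{Derivatives of the phase.} Put $F(x)\coloneqq \xi h(x)$ on $[N,2N]$ and $L\coloneqq \log N$. The first step is a derivative estimate. For every $j\ge1$ we write
\[
h^{(j)}(x)=h(x)x^{-j}\big(c(\log x)^{c-1}\big)^{j}\big(1+O_j((\log x)^{-(c-1)})\big),
\]
so that $|F^{(j)}(x)|\asymp |\xi|h(N)N^{-j}(cL^{c-1})^{j}$ on $[N,2N]$. We need this with an explicit (factorial-type) dependence of the implied constants on $j$, uniformly for $j\le k$, with $k$ growing like a power of $L$. These are the bounds labelled \eqref{lowerboundder} and \eqref{upperboundder}. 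We obtain them by Faà di Bruno applied to $\exp(\psi(\log x))$ with $\psi(t)=t^c$: the top term $(\psi')^j$ dominates because $\psi'(t)=ct^{c-1}\to\infty$, while the lower-order terms lose powers of $t^{c-1}$.

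\textbf{Small frequencies.} Set $\lambda_j\coloneqq |\xi|h(N)L^{j(c-1)}N^{-j}$. If $|\xi|h(N)\le N^{K}$ for a fixed $K$, we choose $j=K+2$. Then $\lambda_j\ll N^{-2+o(1)}$, while $\lambda_j\ge N^{\rho-j}$ by the lower bound on $|\xi|$. The van der Corput $j$-th derivative test,
\[
\Big|\sum_{N<n\le 2N}e(F(n))\Big|\ll N\lambda_j^{1/(2^j-2)}+N^{1-2^{2-j}}\lambda_j^{-1/(2^j-2)},
\]
then gives a power saving $N^{1-\delta}$, which is stronger than $Ne^{-L^{\chi}}$. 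This covers $h(N)^{-1}N^{\rho}\le|\xi|\le h(N)^{-1}N^{K}$.

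\textbf{Large frequencies: Vinogradov.} For $h(N)^{-1}N^{K}<|\xi|\le h(N)^{\theta}$ we have $\log(|\xi|h(N))\asymp L^{c}$ possibly, up to $(1+\theta)L^c$, so the derivative order needed grows: $\lambda_j\approx 1/N^{\sigma}$ requires $j\approx L^{c-1}$. We run Karatsuba's version of Vinogradov's method on short blocks. We Taylor-expand $F(n+y)$ for $y\le P=N^{\varepsilon}$ into a polynomial of degree $k\asymp L^{c-1}\cdot\text{const}$, with coefficients $\alpha_j=F^{(j)}(n)/j!$. The tail is negligible because $\lambda_{k+1}P^{k+1}\ll1$ once $|\xi|\le h(N)^{\theta}$ with $\theta<c-1$. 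This is where the hypothesis $\theta<c-1$ enters, through $\log h(N)^{1+\theta}$ versus $kL\cdot\varepsilon$. We then apply the Vinogradov mean value theorem with explicit dependence on $k$ and on the number of variables $b$: we need $J_{b,k}(P)\le k^{O(k^2)}P^{2b-k(k+1)/2+\Delta}$ with $\Delta\approx k^2(1-1/k)^{b}$ decaying. Standard arguments then give a saving $P^{-\gamma/k^2}$ for the Weyl sum of degree $k$, provided a positive proportion of the middle coefficients $\alpha_j$ lie at "rational-free" size, namely $P^{-j+\cdots}\le|\alpha_j|\le1$. This positivity follows from the derivative asymptotics: $|\alpha_j|$ crosses from $>1$ to $<P^{-j}$ over a window of $j$ of length $\asymp k$. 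The total saving is $P^{-\gamma/k^2}=\exp(-\gamma\varepsilon L/L^{2(c-1)})=\exp(-L^{3-2c+o(1)})$.

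\textbf{Main obstacle.} The bookkeeping in the Vinogradov step is the crux. The $k^{O(k^2)}$ losses in the explicit mean value theorem must be beaten by the saving. We have $k^{k^2}=\exp(L^{2(c-1)}\log L)$, which must be smaller than $P^{\gamma/k^2}$, requiring $2(c-1)<3-2c$ together with the Taylor constraint. Optimizing $\varepsilon$, $k$ and $b$ is what we expect to produce the restriction $c<8/7$. We would try first to track Karatsuba's classical constants precisely, using the derivative bounds to control the Taylor remainder. Those derivative bounds come in sharp form only with explicit $j$-dependence, and that is the most delicate input.
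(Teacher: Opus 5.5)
\textbf{There is a genuine gap: the key claim in your large-frequency step is false on much of that range.} Put $\Lambda:=|\xi|h(N)$. For $j\le c(\log N)^{c-1}$ one has $|\alpha_j|=\Lambda N^{-j}e^{O((\log N)^{c-1})}$. So the indices with $P^{-j}\lesssim|\alpha_j|\lesssim 1$ (your ``rational-free'' range) are essentially those with $\frac{\log\Lambda}{\log N}\lesssim j\lesssim\frac{\log\Lambda}{\log(N/P)}$. With $P=N^{\varepsilon}$ this window has length about $\frac{\varepsilon}{1-\varepsilon}\cdot\frac{\log\Lambda}{\log N}$.

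\begin{itemize}
\item The window is comparable to $k\asymp(\log N)^{c-1}$ only when $\log\Lambda\asymp(\log N)^{c}$, i.e.\ for $|\xi|\ge h(N)^{-1+\epsilon}$. That range is already covered by Karatsuba/Br\"udern--Perelli (Proposition~\ref{prop:BP}).
\item For $\Lambda$ just above $N^{K}$ the window contains only $O(\varepsilon K)$ integers, and it is $o(k)$ whenever $\log\Lambda=o((\log N)^{c})$.
\item So there is no positive proportion of good coefficients there. Neither the saving $P^{-\gamma/k^2}$ nor $\exp(-(\log N)^{3-2c+o(1)})$ follows.
\end{itemize}

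This intermediate range is exactly where the paper says Karatsuba-type arguments with a polynomially short shift break down (Remark~\ref{rem:1}(4)). Your sketch therefore leaves the heart of the proposition untouched.

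Using van der Corput for $\Lambda\le N^{K}$ is a reasonable, more elementary way to dispose of the very smallest frequencies. The paper does not do this. It treats all of $N^{\rho}\le\Lambda\le h(N)^{(c-1)/4}$ at once by taking the shift nearly maximal, $P=\lfloor N^{1/2}e^{-(\log N)^{\tau}}\rfloor$. Then $N/P^2$ is subpolynomial, and $I=\{\log\Lambda/\log(N/P)<j\le c(\log N)^{c-1}\}$ has at least $\frac12(\log N)^{c-1}$ elements for every such $\Lambda$. This makes the product in $\Delta_I$ at most $e^{-\frac{\rho}{4}(\log N)^{c}}$. The price is that the Taylor remainder forces degree $n\asymp(\log N)^{2c-2+\delta}$ and $\asymp n^2\log\log N$ variables. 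Taking the $1/(4k^2)$-th root of $\Delta_I$ is what produces the condition $c<8/7$.

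\textbf{Two smaller issues.}
\begin{itemize}
\item \emph{Van der Corput step.} A single $j=K+2$ cannot cover $[N^{\rho},N^{K}]$. The second term of the $j$-th derivative test is below $N$ only if $\lambda_j\ge N^{-4+2^{3-j}}$. But $\lambda_j$ can be as small as $N^{\rho-K-2+o(1)}$, so the bound is trivial near $\Lambda=N^{\rho}$ once $K\ge\rho+2$. You need $j$ to depend on $\Lambda$, e.g.\ $j=\lfloor\log\Lambda/\log N\rfloor+2$; this is easily arranged.
\item \emph{Your ``main obstacle'' paragraph misplaces the bottleneck.} The mean value bound enters only through $J^{1/(2b^2)}$. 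So the constant $2^{4bm}$ contributes $2^{2m/b}=O(1)$, and a $k^{O(k^2)}$ constant would also be $O(1)$ once $b\gtrsim k^2$. Comparing $k^{k^2}$ with $P^{\gamma/k^2}$ sets an unrooted loss against a rooted saving, and nothing in your outline actually produces $8/7$.
\end{itemize}

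\textbf{How to repair it along your own route.} You would need one of the following.
\begin{itemize}
\item Redo the count with only $\asymp\varepsilon\log\Lambda/\log N$ good indices. Then check that the resulting, much weaker, saving still beats the loss from the $(1-1/k)^{m}$ term of the mean value theorem, under the admissibility condition $P\ge k^{k(1-1/k)^{-m}}$.
\item Let the Taylor degree depend on $\Lambda$, about $(1+2\varepsilon)\log\Lambda/\log N$, so that the good window is again a fixed proportion of the degree.
\end{itemize}
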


Estimates of the exponential sums in \eqref{expsumestbasicdificult} go back to the work of Karatsuba~\cite[Theorem~2]{Kar}, where, among other things, the sequence of fractional parts $\big({\exp((\log n)^c)}\big)_{n\in\mathbb Z_+}$ was shown to be equidistributed for every $c\in(1,3/2)$. In fact, using Vinogradov's method for estimating exponential sums, Karatsuba proved a general theorem that yielded the aforementioned equidistribution result. Later, Br{\"u}dern and Perelli~\cite[Theorem~2]{ApKar} used Karatsuba's general result to show that \eqref{expsumestbasicdificult} holds in the frequency region $h(N)^{-(1-\varepsilon)}\le |\xi|\le h(N)^\theta$ for every $c\in(1,3/2)$, $\theta\in(0,c-1)$, and $\varepsilon\in(0,1)$. This refinement, as well as Karatsuba's argument itself, is insufficient for our purposes. More precisely, the lower bound of this frequency range is too large, and Karatsuba's general result cannot, in fact, handle any threshold of the form $|\xi|\ge h(N)^{-1}N^{\kappa}$, where $\kappa\in(0,\infty)$; see Remark~\ref{rem:1} in Section~\ref{CircleMethod}.

To handle this low-frequency range $h(N)^{-1}N^{\rho}\le|\xi|\le h(N)^{-(1-\varepsilon)}$, we apply Vinogradov's method directly and derive inequality \eqref{expsumestbasicdificult} by optimizing the shift parameter, the order of the Taylor approximation, and the precise choice of the set of derivatives whose upper and lower bounds are exploited by the method; see Sections~\ref{VinMethod} and~\ref{sec:exp} for details. Vinogradov's method for estimating exponential sums is inherently technical, and so is its specific manifestation here. We decouple the standard steps of the method from our novel input and, after briefly presenting the basic mechanism of Vinogradov's method in a high degree of generality in Section~\ref{VinMethod}, elaborate on the aforementioned choices; see the discussion after \eqref{eq:4}.

Let us make a few final comments. Our approach deviates from Karatsuba's~\cite{Kar} and, for such small frequencies, we do not obtain the full range $c\in(1,3/2)$, but rather $c\in(1,8/7)$. A key mechanism behind Vinogradov's method for estimating exponential sums is the Vinogradov mean value theorem \cite{V}, which asserts that for every $k,m,n\in\mathbb Z_+$ such that $k\ge n(n+m)$, and for every $P\ge n^{n(1-1/n)^{-m}}$, we have
\begin{align}
\label{eq:35}
J_{k,n}(P)\le 2^{4km}P^{2k-\frac{n(n+1)}{2}+\frac{n(n+1)}{2}(1-1/n)^m},
\end{align}
where
\[
J_{k,n}(P)
\coloneqq\big|\big\{(u_1,\dotsc,u_k,v_1,\dotsc,v_k)\in[P]^{2k}:
\sum_{i\in[k]}(u_i^s-v_i^s)=0
\text{ for every }s\in[n]\big\}\big|.
\]
In the context of our considerations, it is essential that estimate \eqref{eq:35} provides explicit dependence on both parameters $(k,n)$ of the Vinogradov system of Diophantine equations. The superpolynomial growth of the function $h(x)\coloneqq\exp\big((\log x)^c\big)$ forces us to perform a Taylor approximation for $x\in(N,2N]$ of order $n=n(N)\to\infty$ as $N\to\infty$. This, in turn, forces $k$ to depend on $N$ as well and to diverge as $N\to\infty$, making the need for explicit bounds in \eqref{eq:35} clear.

Interestingly, our main result provides an example outside number theory where, much like in estimates of exponential sums arising in the study of zero-free regions for the Riemann zeta function, the proof relies in an essential way on the classical form of the Vinogradov mean value theorem \cite{V}, since any bound for $J_{k,n}(P)$ not providing explicit dependence on the system's parameters is unsuitable. Thus, the breakthrough results of Wooley~\cite{W} and Bourgain--Demeter--Guth~\cite{BDG}, see also \cite{Lillian}, are not applicable in our setting, as they do not provide the required explicit dependence on these parameters. To the best of our knowledge, it remains open whether such explicit dependence can be recovered from their methods, and
it would be interesting to settle this question.

Having at our disposal Proposition~\ref{expsumestfull}, estimating the first exponential sum on the right-hand side of \eqref{expsumconcrete} becomes relatively straightforward. We first establish a nondyadic variant of Proposition~\ref{expsumestfull}, see Proposition~\ref{expsumestfullfullN}, which we then combine with certain standard floor-removing techniques from number theory to conclude; see Subsection~\ref{minarcsubsection}. Although both steps are usually fairly routine, the superpolynomial growth of the function, the large size of the minor arcs, and the subpolynomial nature of the saving in Proposition~\ref{expsumestfull} require some care in the analysis.

\subsection{Organization of the paper}
We have organized the paper as follows. In Section~\ref{notation}, we collect the necessary notation. In Section~\ref{VinMethod}, we present the essential aspects of Vinogradov's method \cite{V} for estimating exponential sums. Our exposition is based primarily on \cite[Section~8]{IWKO}. In Section~\ref{sec:exp}, which constitutes the technical heart of the paper, we prove Proposition~\ref{expsumestfull}. In Section~\ref{CircleMethod}, we develop a variant of the one-frequency circle method and prove Proposition~\ref{expsumestfullforav}. Finally, in Section~\ref{sectionOrbit}, we establish Theorem~\ref{thm:mainq}.

\section*{Acknowledgments}
The first author would like to thank Borys Kuca for several useful discussions, as well as Nikos Frantzikinakis and B{\l}a{\.z}ej Wr{\'o}bel for their constant support and encouragement. Also, he is grateful to Trevor Wooley for their lectures during the Simons School on Discrete harmonic analysis and analytic number theory at the R\'enyi Institute in Budapest, Spring 2026, for clarifying certain aspects of Vinogradov's method of estimating exponential sums. These lectures offered great insights on how the method produces decay, which together with the method's exposition in Section~8.5 from \cite{IWKO}, proved to be invaluable. 

\section{Notation}\label{notation}
We now set up the notation that will be used throughout the article. 
\subsection{Basic notation}
We use the convention $\mathbb N \coloneqq \{0,1,2,\dots\}$, and the sets $\mathbb Z, \, \mathbb Q, \, \mathbb R$,  $\mathbb C$ and $\mathbb T \coloneqq \mathbb R / \mathbb Z$ have their standard meaning.
For every real number $x\in\mathbb R$, we use the floor, fractional part, and distance to the closest integer functions, defined by $\lfloor x \rfloor \coloneqq \max\{n\in\mathbb Z:n\le x\}$, $\{x\}\coloneqq x-\lfloor x\rfloor$ and $\|x\|\coloneq \min\{|x-n|:\,n\in\mathbb{Z}\}$, respectively. Throughout the paper the torus $\mathbb{T}$  is identified  with the unit interval $[-1,2/1,2) $, so that $\|\xi\|=|\xi|$ for every $\xi\in\mathbb{T}$ . 

For $N \in \mathbb R$ and $\mathbb{S}\subseteq \mathbb R$, we also define the sets
\begin{align*}
\mathbb{S}_{<N}&\coloneqq \mathbb{S}\cap(-\infty, N),  \quad \phantom{],} \mathbb{S}_{>N}\coloneqq \mathbb{S}\cap(N, \infty),\\
\mathbb{S}_{\le N}&\coloneqq \mathbb{S}\cap(-\infty, N],  \quad 
\phantom{),} \mathbb{S}_{\ge N}\coloneqq \mathbb{S}\cap[N, \infty),
\end{align*}
and for $N=0$ we let for convenience  $\mathbb{S}_+\coloneq \mathbb{S}_{>0}$. For $N \in \mathbb R_+$, we define $[N] \coloneqq \mathbb Z \cap (0,N]$. If $A$ is a finite set, its number of elements is denoted by $|A|$.

Finally, for any sequences $(a_m)_{m\in\mathbb Z}\subseteq\mathbb C$ and
$(b_m)_{m\in\mathbb Z}\subseteq\mathbb C$ and any integers $U,V\in\mathbb Z$ such
that $U< V$ we will use the following version of the summation by
parts formula
\begin{align}
\label{eq:31}
\sum_{m=U+1}^{V}a_mb_m=S_{V}b_{V}-\sum_{m=U+1}^{V-1}S_m(b_{m+1}-b_m), \quad \text{where} \quad S_m=\sum_{l=U+1}^ma_l.
\end{align}

\subsection{Asymptotic notation} Throughout the paper, $C\in\mathbb R_+$ denotes an absolute constant whose value may change from line to line. For two quantities $A,B\in\mathbb R_{\ge 0}$, we write $A \lesssim B$ or $B \gtrsim A$ if $A\le CB$ for some $C\in\mathbb R_+$ and $A\simeq B$ if $A \lesssim B\lesssim A$. We will use the symbols $\lesssim_{\delta}$ and $\simeq_\delta$ to emphasize that the implicit constant $C$ depends on a parameter $\delta$.
For two functions $f \colon X\to \mathbb C$ and $g \colon X\to \mathbb R_+$, we write $f = O(g)$ if there exists a constant $C>0$ such that $|f(x)| \le C g(x)$ for all $x\in X$.
\subsection{Euclidean spaces}
Let  $d\in\mathbb Z_+$,  the standard inner product and the corresponding Euclidean norm on $\mathbb R^d$ are denoted, respectively, by 
\begin{align*}
x\cdot\xi \coloneqq \sum_{i \in [d]} x_i \xi_i,
\quad \text{and} \qquad
|x|  \coloneq \sqrt{x\cdot x }
\end{align*}
for every $x=(x_1,\ldots, x_d)$ and
$\xi=(\xi_1, \ldots, \xi_d)\in\mathbb R^d$. 

For two vectors $x=(x_1,\ldots,x_d)$, $y=(y_1,\ldots,y_d)\in\mathbb R^d$, we define their Hadamard product by
\begin{align}
\label{eq:24}
x\odot y\coloneqq (x_1y_1,\ldots,x_dy_d)\in\mathbb R^d\text{,}
\end{align}
and note that
$(x\odot y)\cdot z=(x\odot z)\cdot y$ for every $x,y,z\in\mathbb R^d$.

\subsection{Fourier transform} Recall that $e (x)\coloneqq e^{2\pi i x}$  for every $x\in\mathbb{R}$.
The Fourier transform  of a function $f\in\ell^1(\mathbb Z)$ is defined by
\begin{align*}
\mathcal F_{\mathbb Z} [f](\xi) \coloneqq 
\sum_{n\in\mathbb Z} f(n) e (n \xi),\quad \xi\in\mathbb T\text{,}
\end{align*}
while the  inverse Fourier transform  of a function $g\in L^1(\mathbb T)$ is defined by
\begin{align*}
 \mathcal F_{\mathbb Z}^{-1} [g](n) \coloneqq \int_{\mathbb T} g(\xi) e (-x  \xi)d\xi,\quad n\in\mathbb Z.
\end{align*}
By Plancherel's theorem, the Fourier transform extends to a surjective isometry from $\ell^2(\mathbb Z)$ onto $L^2(\mathbb T)$, and $\|\mathcal F_{\mathbb Z}[f]\|_{L^2(\mathbb T)}=\|f\|_{\ell^2(\mathbb Z)}$ for every $f\in\ell^2(\mathbb Z)$.

\subsection{Variational seminorms}
For any $\mathbb I\subseteq \mathbb R$, any family
$(\mathfrak a_t)_{t\in\mathbb I}=(\mathfrak a_t: t\in\mathbb I)\subseteq \mathbb C$,
and any exponent $r \in [1, \infty)$, we define the $r$-variation seminorm of the family by
\begin{align}
\label{eq:18}
V^{r}( \mathfrak a_t: t\in\mathbb I) \coloneqq 
\sup_{J\in\mathbb Z_+} \sup_{\substack{t_{0}<\dotsb<t_{J}\\ t_{j}\in\mathbb I}}
\Big(\sum_{j \in [J]}  |\mathfrak a_{t_{j}}-\mathfrak a_{t_{j-1}}|^{r} \Big)^{1/r},
\end{align}
where the supremum is taken over all finite increasing sequences in
$\mathbb I$, and is set by convention to equal zero if
$|\mathbb I| \leq 1$. The ${V}^r$ norms are nonincreasing in $r$, and if 
$\mathbb I\subseteq \mathbb R$ is countable, we have
\begin{equation}
\label{eq:25}
{V}^{r}( \mathfrak a_t: t\in\mathbb I)\lesssim \Big(\sum_{t\in\mathbb I}|\mathfrak a_t|^r\Big)^{1/r}\text{.}
\end{equation}

\section{Vinogradov's method}\label{VinMethod}
In this section, we recall the key features of Vinogradov's method
\cite{V} and the Vinogradov mean value theorem, which will play an
essential role in our exponential sum estimates. We briefly introduce
the relevant notation and standard tools below. Our exposition is
based primarily on \cite[Section~8]{IWKO}.

\begin{proposition}\label{easyfactsJ}
Let $k,n\in\mathbb{Z}_+$ and $P\ge1$ and for every $y\in\mathbb R$ define the moment curve  
\begin{align*}
\mathfrak m_n(y)\coloneqq (y, y^2,\ldots, y^n)\in \mathbb R^n.
\end{align*}
For every $\boldsymbol{\lambda}=(\lambda_1,\dotsc,\lambda_n)\in\mathbb{Z}^n$, we define 
\begin{equation}\label{defJ}
J_{k,n}(P;\boldsymbol{\lambda})\coloneqq \int_{\mathbb{T}^n}\Big|\sum_{y\in[P]}e({\bm \alpha}\cdot \mathfrak m_n(y))\Big|^{2k}e\big(-{\bm \alpha}\cdot {\bm \lambda}\big)d\boldsymbol{\alpha}\text{,}
\end{equation}
and for convenience, we set $
J_{k,n}(P)\coloneqq J_{k,n}\big(P;(0,\dotsc,0)\big)$. Then the  following properties hold.
\smallskip
\begin{enumerate}[itemsep=5pt]
\item[\normalfont{(i)}] $
0\le J_{k,n}(P;\boldsymbol{\lambda})\le J_{k,n}(P)$.
\item[\normalfont{(ii)}] $
J_{k,n}(P;\boldsymbol{\lambda})=\big|\big\{(u_1,\dotsc, u_k,v_1,\dotsc, v_k)\in [P]^{2k}: \sum_{i\in[k]}(u_i^s-v_i^s)=\lambda_s\text{ for every } s\in[n]\big\}\big|$.
\item[\normalfont{(iii)}] $\sum_{\boldsymbol{\lambda}\in\mathbb{Z}^n}J_{k,n}(P;\boldsymbol{\lambda})=P^{2k}$.
\item[\normalfont{(iv)}] If $J_{k,n}(P; \boldsymbol{\lambda})\neq 0$, then for every $i\in[n]$ we have $
|\lambda_i|\le kP^i$, i.e.
\[
\supp(J_{k,n}(P;\cdot))\subseteq S_{k,n}(P)\coloneqq\big\{\boldsymbol{\lambda}\in\mathbb{Z}^n:\,|\lambda_i|\le kP^i\text{ for every }i\in[n]\big\}.
\]
\end{enumerate}
\end{proposition}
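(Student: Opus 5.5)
The plan is to first prove (ii), the combinatorial description, and then read off (i), (iii) and (iv) from it.

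\emph{Step (ii).} Expand the $2k$-th power as
\[
\Big|\sum_{y\in[P]}e({\bm\alpha}\cdot\mathfrak m_n(y))\Big|^{2k}=\sum_{u,v\in[P]^k}e\Big({\bm\alpha}\cdot\sum_{i\in[k]}\big(\mathfrak m_n(u_i)-\mathfrak m_n(v_i)\big)\Big).
\]
The vector $\sum_i(\mathfrak m_n(u_i)-\mathfrak m_n(v_i))$ lies in $\mathbb Z^n$. Integrating term by term over $\mathbb T^n$ against $e(-{\bm\alpha}\cdot{\bm\lambda})$ and using orthogonality, $\int_{\mathbb T^n}e({\bm\alpha}\cdot{\bm\mu})\,d{\bm\alpha}=\mathds 1_{{\bm\mu}=0}$ for ${\bm\mu}\in\mathbb Z^n$, gives exactly the count in (ii).

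\emph{Step (iii) and the first half of (i).} Nonnegativity in (i) is immediate from (ii). For (iii), every $(u,v)\in[P]^{2k}$ determines exactly one ${\bm\lambda}$, namely $\lambda_s=\sum_i(u_i^s-v_i^s)$. Summing the counts in (ii) over ${\bm\lambda}$ therefore counts $[P]^{2k}$ exactly once, which gives $P^{2k}$.

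\emph{Step (iv).} If the count in (ii) is nonzero, pick a solution. Each of the $k$ summands satisfies $u_i^s-v_i^s\in(-P^s,P^s)$, since $1\le u_i,v_i\le P$. Hence $|\lambda_s|\le kP^s$.

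\emph{Second half of (i).} This is the only step needing more than orthogonality; I expect it to be the main (though mild) obstacle. Write $F(y)\coloneqq|\{u\in[P]^k:\sum_i\mathfrak m_n(u_i)=y\}|$ for $y\in\mathbb Z^n$. By (ii),
\[
J_{k,n}(P;{\bm\lambda})=\sum_{y}F(y)F(y-{\bm\lambda}).
\]
Cauchy--Schwarz then bounds this by $\sum_y F(y)^2=J_{k,n}(P)$. Alternatively, one can use the integral form directly: bound $|e(-{\bm\alpha}\cdot{\bm\lambda})|\le1$ and note that the integrand $|\cdot|^{2k}$ is nonnegative, so its integral equals $J_{k,n}(P)$.
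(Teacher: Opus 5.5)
Your proof is correct and follows the paper's route. You prove (ii) by expanding the $2k$-th power and using orthogonality, and read off nonnegativity, (iii) and (iv) from (ii). For the upper bound in (i), your ``alternative'' (bounding the integral directly) is the paper's argument, and the Cauchy--Schwarz bound via $\sum_y F(y)F(y-\boldsymbol{\lambda})$ is an equally valid variant.

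One small caveat, which also applies to the statement in the paper: since $|[P]|=\lfloor P\rfloor$, the sum in (iii) equals $\lfloor P\rfloor^{2k}$. So the identity is exact only for $P\in\mathbb{Z}_+$, which is the case in every application. For non-integer $P$ it should be read as $\le P^{2k}$.
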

\begin{proof}
The proof is standard. Property \normalfont{(ii)} is obtained by expanding the power in \eqref{defJ} and exploiting orthogonality, and immediately yields every other property except for the second inequality in \normalfont{(i)} which is immediate from the definition.
\end{proof}
A crucial tool of Vinogradov's method is the  Vinogradov mean value theorem, which reads as follows.
\begin{theorem}\label{VMVT}
Let $k, m, n\in\mathbb Z_+$ be such that $k\ge n(n+m)$. Then for every $P\ge n^{n(1-1/n)^{-m}}$, we have
\begin{equation}\label{VMVTbounds}
J_{k,n}(P)\le 2^{4km}P^{2k-\frac{n(n+1)}{2}+\frac{n(n+1)}{2}(1-1/n)^m}\text{.}
\end{equation}
\end{theorem}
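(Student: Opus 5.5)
We would follow the classical route of \cite[Section~8.5]{IWKO}: first prove a recursive inequality for $J_{k,n}$ that trades $n$ extra variables for a shrinking of the range $P\mapsto P^{1-1/n}$, and then iterate it $m$ times starting from the trivial bound. Throughout we track all constants explicitly as powers of $2$ and of $n$, since the explicit dependence on $(k,n,m)$ is the whole point of Theorem~\ref{VMVT}. Two facts are used repeatedly, both via Proposition~\ref{easyfactsJ}. First, $J_{k,n}(P;\boldsymbol\lambda)$ is invariant under the shift $y\mapsto y+a$: the binomial theorem transforms the system $\sum_i(u_i^s-v_i^s)=\lambda_s$, $s\in[n]$, triangularly into an equivalent system. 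Second, Hölder's inequality on $\mathbb T^n$ gives monotonicity and splitting estimates of the form $\int|S_1|^{2k_1}|S_2|^{2k_2}\le J_{k_1+k_2}^{\cdots}$ for subsums $S_1,S_2$ of the Weyl sum.

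\textbf{Step 1 (recursive lemma).} We aim for the following: for $k\ge n$ and $P$ large in terms of $n$,
\[
J_{k+n,n}(P)\le 2^{4(k+n)}\,P^{2n+\frac{n-1}{2}}\,J_{k,n}(P^{1-1/n}),
\]
with a constant of exactly this shape, possibly up to harmless $n^{O(n)}$ factors absorbed into $2^{4k}$ using $k\ge n^2$.

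To prove it, partition $[P]$ into roughly $P^{1/n}$ blocks of length $Q\approx P^{1-1/n}$. Among the $2(k+n)$ variables, distinguish $n$ of the $u$'s and $n$ of the $v$'s.

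In the \emph{well-spaced} case the $n$ distinguished $u$'s lie in distinct blocks, and likewise for the $v$'s. Here we use Linnik's lemma (Vinogradov's lemma): the number of well-spaced $n$-tuples whose power sums $\sum_i x_i^s$, $s\in[n]$, lie in a prescribed box of side $\approx kP^{s-1}Q$ is at most $n!\,\cdot$ (volume)$/Q^{n(n+1)/2}$-type, which yields the factor $P^{2n+\frac{n-1}{2}}$. The remaining $k$ variables on each side are then shifted into $[Q]$ by the translation invariance. This produces $J_{k,n}(Q;\boldsymbol\lambda)\le J_{k,n}(Q)$, after summing over the block positions.

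In the \emph{clustered} case two distinguished variables share a block. By Hölder, this case is dominated by an analogous expression with fewer free variables, and it is reabsorbed by pigeonholing, again at a cost of a power of $2$ per variable.

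\textbf{Step 2 (iteration).} Set $P_j=P^{(1-1/n)^j}$. The hypothesis $P\ge n^{n(1-1/n)^{-m}}$ is exactly what guarantees $P_j\ge n^n$ for all $j\le m$, so the largeness condition of Step 1 holds at every stage.

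Apply the lemma $m$ times to $J_{k,n}(P)$ with $k\ge n(n+m)$, peeling off $n$ variables each time and ending with $J_{k-nm,n}(P_m)\le P_m^{2(k-nm)}$.

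Summing exponents and writing $\Delta_j=\frac{n(n+1)}{2}(1-1/n)^j$, one checks by induction on $j$ that the accumulated exponent equals $2k-\frac{n(n+1)}2+\Delta_j$. The identity to verify is that passing from $j$ to $j+1$ contributes $(2n+\tfrac{n-1}2)$ times the current logarithmic scale together with the factor $(1-1/n)$ on the remainder, which is the recursion $\Delta_{j+1}=(1-1/n)\Delta_j$ up to the linear terms that telescope. The constants multiply to at most $\prod_{j<m}2^{4(k-jn)}\le 2^{4km}$. This bookkeeping is routine once Step 1 is stated with the right exponent.

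\textbf{Main obstacle.} The main obstacle is Step 1, specifically obtaining the well-spaced count with the sharp exponent $\frac{n-1}2$ and with constants bounded by $2^{O(k)}$ rather than by something like $(n!)^{k}$.

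Our first attempt would be to follow \cite[Lemma~8.?]{IWKO}. Prove Linnik's lemma via the Vandermonde determinant: the Jacobian of $x\mapsto(\sum_i x_i^s)_{s\le n}$ is $n!\prod_{i<j}(x_j-x_i)$, which is at least $n!\,Q^{n(n-1)/2}$ on well-spaced tuples. Then cover the relevant region by boxes of side $\asymp Q$ per variable. The product of the block-scales $P^{s-1}$ for $s\le n$ yields $P^{n(n-1)/2}$, and dividing by $Q^{n(n-1)/2}$ leaves exactly $P^{(n-1)/2}$.

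If the clustered case does not close directly, the fallback is to introduce an extra parameter counting the number of distinct blocks occupied, and to estimate each configuration by Hölder with a combinatorial factor at most $\binom{2k}{n}\le 2^{2k}$.
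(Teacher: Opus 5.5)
The overall architecture is the classical one: a recursion that trades $n$ variables for $P\mapsto P^{1-1/n}$, iterated $m$ times from the trivial bound, with the hypothesis on $P$ ensuring $P_j\ge n^n$. (The paper itself gives no proof and just quotes \cite[Theorem~8.21]{IWKO}.) However, your Step~1 recursion is false as stated, and the iteration inherits the error.

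\textbf{The missing H\"older loss.} Your exponent $2n+\frac{n-1}{2}$ does not depend on $k$, but it must. Translation invariance shifts all variables by one common amount. So before the $k$ free variables on each side can be moved into $[Q]$, they have to be forced into a single block. That requires H\"older over the $B\asymp P^{1/n}$ blocks, $|S|^{2k}\le B^{2k-1}\sum_b|S_b|^{2k}$, which costs roughly $P^{2k/n}$. ``Summing over the block positions'' ignores that the free variables may lie in different blocks.

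Concretely, Proposition~\ref{easyfactsJ}(i), (iii), (iv) give $P^{2s}\le|S_{s,n}(P)|\,J_{s,n}(P)$, hence $J_{s,n}(P)\ge(3s)^{-n}P^{2s-\frac{n(n+1)}{2}}$. For $n=2$, your lemma and the trivial bound $J_{k,2}(Q)\le Q^{2k}$ give $J_{k+2,2}(P)\le 2^{4(k+2)}P^{k+\frac92}$. On the other hand $J_{k+2,2}(P)\ge(3k+6)^{-2}P^{2k+1}$, a contradiction for every $k\ge 4$ and large $P$. In general the contradiction appears as soon as $k>\frac{n(n^2+2n-1)}{4}$.

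The correct shape is $J_{k+n,n}(P)\le C_{k,n}P^{\frac{2k}{n}+n+\frac{n-1}{2}}J_{k,n}(P^{1-1/n})$. Only with the $\frac{2k}{n}$ term do the linear terms in your Step~2 cancel. With your exponent the induction gives $\Delta'=(1-\frac1n)\Delta+n+2-\frac{2k'}{n}$, where $k'$ is the current number of variables per side, rather than $\Delta'=(1-\frac1n)\Delta$.

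\textbf{The well-spaced count.} Restoring $P^{2k/n}$ does not rescue this step either. A box with sides $\approx kP^{s-1}Q$, $s\in[n]$, has volume about $k^nQ^nP^{n(n-1)/2}$. Dividing by the Vandermonde lower bound $n!\,Q^{n(n-1)/2}$ for the Jacobian yields only about $k^nQ^nP^{\frac{n-1}{2}}$ tuples. Your computation keeps only the factors $P^{s-1}$ of the side lengths and so drops $Q^n=P^{n-1}$. Keeping that factor turns the recursion into $\Delta'=(1-\frac1n)\Delta+(n-1)$, which does not decay.

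The missing ingredient is Linnik's $p$-adic lemma, which runs as follows.
\begin{enumerate}
\item Take a prime $p\in(P^{1/n},2P^{1/n}]$. The hypothesis gives $P_j\ge n^n$, hence $p>n$, at every stage.
\item Isolate $n$ variables on each side that are pairwise incongruent modulo $p$.
\item Apply H\"older over residue classes modulo $p$ to the remaining $2k$ variables, at cost $p^{2k}\le 2^{2k}P^{2k/n}$.
\item Translate by the common residue $r$. These variables become $r+pz$ with $0\le z<P/p$, which forces $\sum_i(x_i-r)^j\equiv\sum_i(y_i-r)^j\pmod{p^j}$ for $j\in[n]$.
\item For fixed $y$'s, this system has at most $n!\,p^{n(n-1)/2}\approx P^{\frac{n-1}{2}}$ solutions modulo $p^n$ with pairwise incongruent $x$'s. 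This is exactly where $\frac{n-1}{2}$ comes from.
\end{enumerate}

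\textbf{The clustered case.} This case is asserted rather than proved. A direct H\"older bound there produces factors like $(n-1)^{2(k+n)}$, not $2^{O(k)}$. Matching the explicit constant $2^{4km}$ therefore needs a genuine argument.
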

\begin{proof}
For the proof we refer to  \cite[Theorem~8.21]{IWKO}, see also \cite{V}.
\end{proof}

As discussed in the introduction, for our application of the method, the explicit dependence of the bounds in \eqref{VMVTbounds} on the parameters $k,n$ is indispensable.

We briefly now describe the general main set up of Vinogradov's method for estimating exponential sums, inherently relying on estimates for the number of solutions for the Vinogradov system of Diophantine equations, namely on bounds for $J_{k,n}(P)$. 

\subsection{Step 1: Doubling the variables by a small shift}
For any $N\in\mathbb Z_+$, $P\in [1, N^{1/2}]$ and any function $\mathfrak h:\mathbb R_+\to \mathbb R$, we have 
\begin{equation}\label{shiftq}
\Big|\sum_{a=N+1}^{2N}e(\mathfrak h(a)\xi)\Big|\le \frac{1}{P^2}\sum_{a=N+1}^{2N}|U_P(a)|+2P^2,
\end{equation}
where
\[
U_P(a)\coloneqq\sum_{y\in[P]}\sum_{z\in[P]}e(\mathfrak h(a+yz)\xi).
\]
To see this, we write
\begin{equation}\label{shift1q}
\begin{split}
\frac{1}{P^2}\sum_{a=N+1}^{2N}U_P(a)
&=\sum_{a=N+1}^{2N}e(\mathfrak h(a)\xi)+\frac{1}{P^2}\sum_{y\in[P]}\sum_{z\in[P]}\bigg(\sum_{a=N+1+yz}^{2N+yz}e(\mathfrak h(a)\xi)-\sum_{a=N+1}^{2N}e(\mathfrak h(a)\xi)\bigg)
\\
&=\sum_{a=N+1}^{2N}e(\mathfrak h(a)\xi)+\frac{1}{P^2}\sum_{y\in[P]}\sum_{z\in[P]}\bigg(\sum_{a=2N+1}^{2N+yz}e(\mathfrak h(a)\xi)-\sum_{a=N+1}^{N+yz}e(\mathfrak h(a)\xi)\bigg).
\end{split}
\end{equation}
By the triangle inequality we estimate the last term as follows 
\begin{equation}\label{shift2q}
\begin{split}
&\bigg|\frac{1}{P^2}\sum_{y\in[P]}\sum_{z\in[P]}\bigg(\sum_{a=2N+1}^{2N+yz}e(\mathfrak h(a)\xi)-\sum_{a=N+1}^{N+yz}e(\mathfrak h(a)\xi)\bigg)\bigg|\le\frac{1}{P^2}\sum_{y\in[P]}\sum_{z\in[P]} 2yz\le 2P^2.
\end{split}
\end{equation}
By \eqref{shift1q} and \eqref{shift2q} we obtain \eqref{shiftq}, as desired.
\subsection{Step 2: Taylor approximation} We additionally assume that  $\mathfrak h:\mathbb R_+\to \mathbb R$ is smooth, and we further seek to estimate $|U_P(a)|$ for $a\in (N,2N]$.  By Taylor
expanding up to order $n\in\mathbb Z_+$, we find an intermediate point  $\theta_{a, n,yz}\in(N+1,3N)$ such that $\mathfrak h(a+yz)=\mathfrak h_n(a, yz)+\mathfrak r_n(a, yz)$, where
\[
\mathfrak h_n(a, yz)\coloneqq\sum_{j=0}^n\frac{\mathfrak h^{(j)}(a)}{j!}(yz)^{j}
\qquad \text{ and } \qquad \mathfrak r_n(a, yz) \coloneqq \frac{\mathfrak h^{(n+1)}(\theta_{a,n, yz})}{(n+1)!}(yz)^{n+1}.
\]
If we further let
\[
V_P(a)\coloneqq\sum_{y\in[P]}\sum_{z\in[P]}e\big(\mathfrak h_n(a, yz)\xi\big),
\]
and use the trivial estimate $|e(x)-1|\le 2\pi |x|$ together with the Taylor expansion, we obtain
\begin{equation}\label{TaylorRemainderq}
\begin{split}
|U_P(a)|\le&|V_P(a)|+\Big|\sum_{y\in[P]}\sum_{z\in[P]}e\big(\mathfrak h(a+yz)\xi\big)-e\big(\mathfrak h_n(a, yz)\xi\big)\Big|
\\
\le&|V_P(a)|+2\pi \sum_{y\in[P]}\sum_{z\in[P]}\bigg|\frac{\xi \mathfrak h^{(n+1)}(\theta_{a, n, yz})}{(n+1)!}(yz)^{n+1}\bigg|\\
\le&|V_P(a)|+2\pi |\xi| P^{2(n+2)}\sup_{\theta\in(N+1, 3N)}\bigg|\frac{\mathfrak h^{(n+1)}(\theta)}{(n+1)!}\bigg|.
\end{split}
\end{equation}
Combining \eqref{shiftq} with \eqref{TaylorRemainderq}, we conclude that
\begin{align}
\label{eq:1}
\Big|\sum_{a=N+1}^{2N}e(\mathfrak h(a)\xi)\Big|\le \frac{1}{P^2}\sum_{a=N+1}^{2N}|V_P(a)|+2\pi |\xi| P^{2(n+1)}N\sup_{\theta\in(N+1, 3N)}\bigg|\frac{\mathfrak h^{(n+1)}(\theta)}{(n+1)!}\bigg|+2P^2.
\end{align}

\subsection{Step 3: The moment curve and Vinogradov's counting function revelation} For every $j\in[n]$ we let 
$B_j\coloneqq B_j(a)\coloneqq\frac{\mathfrak h^{(j)}(a)}{j!}\xi $, and we form a  vector $B\coloneqq(B_1,\ldots, B_n)\in \mathbb R^n$. By Holder's inequality, for every $k\in\mathbb Z_+$, we reveal the moment curve and obtain
\begin{equation}
\label{eq:2}
|V_P(a)|\le P^{\frac{2k-1}{2k}}\Big(\sum_{y\in[P]}\Big|\sum_{z\in[P]}e\big(B\cdot \mathfrak m_n(yz)\big)\Big|^{2k}\Big)^{\frac{1}{2k}},
\end{equation}
since $\mathfrak h_n(a, yz)\xi=\mathfrak h(a)\xi+ B\cdot \mathfrak m_n(yz)$.
Having identified the moment curve, we now see that the Vinogradov counting function $J_{k, n}(P;{\bm \lambda})$ arises naturally. Namely, by squaring and \eqref{eq:24} we have the identity
\[
\Big|\sum_{z\in[P]}e\big(B\cdot \mathfrak m_n(yz)\big)\Big|^2=
\sum_{u, v\in[P]}e\Big(\big(B\odot \mathfrak m_n(y)\big) \cdot\big(\mathfrak m_n(u)-\mathfrak m_n(v)\big)\Big),
\]
since $B\cdot \mathfrak m_n(yz)=\big(B\odot \mathfrak m_n(y)\big)\cdot \mathfrak m_n(z)$. Combining this identity 
with \eqref{eq:2} yields the following bound
\begin{align*}
|V_P(a)|^{2k}\le
&P^{2k-1}\sum_{y\in[P]}\bigg(\sum_{u, v\in[P]}e\Big(\big(B\odot \mathfrak m_n(y)\big) \cdot\big(\mathfrak m_n(u)-\mathfrak m_n(v)\big)\Big)\bigg)
^{k}
\\
=&P^{2k-1}\sum_{y\in[P]}\sum_{\substack{u_l,v_l\in[P]\\ l\in[k]}}e\Big(\big(B\odot \mathfrak m_n(y)\big) \cdot\big(\sum_{l\in[k]}\mathfrak m_n(u_l)-\mathfrak m_n(v_l)\big)\Big)
\\
=&P^{2k-1}\sum_{y\in[P]}\sum_{\boldsymbol{\lambda}\in\mathbb{Z}^n}J_{k,n}(P;{\bm \lambda})e\Big(\big(B\odot \mathfrak m_n(y)\big) \cdot{\bm \lambda}\Big).
\end{align*}
Noting that $\big(B\odot \mathfrak m_n(y)\big) \cdot{\bm \lambda}=\big(B\odot {\bm \lambda}\big) \cdot \mathfrak m_n(y)$ and using H\"older's inequality and all four assertions of Proposition~$\ref{easyfactsJ}$, we may further write
\begin{equation}\label{W1longq}
\begin{split}
|V_P(a)|^{2k}\le&P^{2k-1}\sum_{\boldsymbol{\lambda}\in\mathbb{Z}^n}J_{k,n}(P;\boldsymbol{\lambda})\Big|\sum_{y\in[P]}e\big(\big(B\odot {\bm \lambda}\big) \cdot \mathfrak m_n(y)\big)\Big|
\\
\le&P^{2k-1}\Big(\sum_{\boldsymbol{\lambda}\in\mathbb{Z}^n}J_{k,n}(P;\boldsymbol{\lambda})\Big)^{\frac{2k-1}{2k}}
\cdot \Big(\sum_{\boldsymbol{\lambda}\in\mathbb{Z}^n}J_{k,n}(P;\boldsymbol{\lambda})\Big|\sum_{y\in[P]}e\big(\big(B\odot {\bm \lambda}\big) \cdot \mathfrak m_n(y)\big)\Big|^{2k}\Big)^{\frac{1}{2k}}
\\
\le& P^{4k-2}
J_{k,n}(P)^{\frac{1}{2k}}\cdot \Big(\sum_{\boldsymbol{\lambda}\in S_{k,n}(P)}\Big|\sum_{y\in[P]}e\big((B\odot {\bm \lambda}) \cdot \mathfrak m_n(y)\big)\Big|^{2k}\Big)^{\frac{1}{2k}}.
\end{split}
\end{equation}
Proceeding in a similar manner we may bound the last factor above by noting that
\begin{equation}\label{finalW1q}
\begin{split}
\sum_{\boldsymbol{\lambda}\in S_{k,n}(P)}\Big|\sum_{y\in[P]}e\big((B\odot {\bm \lambda}) \cdot \mathfrak m_n(y)\big)\Big|^{2k}
=&\sum_{\boldsymbol{\lambda}\in S_{n,k}(P)}\sum_{\substack{u_l,v_l\in[P]\\ l\in[k]}}e\Big((B\odot {\bm\lambda}) \cdot\big(\sum_{l\in[k]}\mathfrak m_n(u_l)-\mathfrak m_n(v_l)\big)\Big)
\\
=&\sum_{\boldsymbol{\lambda}\in S_{n,k}(P)}\sum_{\boldsymbol{\mu}\in\mathbb{Z}^n}J_{k, n}(P; \boldsymbol{\mu})e\big((B\odot {\bm\lambda}) \cdot{\bm\mu}\big)
\\
\le&\sum_{\boldsymbol{\mu}\in\mathbb{Z}^n}J_{k, n}(P; \boldsymbol{\mu})\Big|\sum_{\boldsymbol{\lambda}\in S_{n,k}(P)}e\big((B\odot {\bm\lambda}) \cdot{\bm\mu}\big)\Big|
\\
\le& J_{k,n}(P)\prod_{j\in[n]}\sum_{|\mu_j|\le kP^j}\Big|\sum_{|\lambda_j|\le kP^j}e\big(B_j\lambda_j\mu_j\big)\Big|.
\end{split}
\end{equation}
Combining \eqref{W1longq} with \eqref{finalW1q} and raising once again to the $2k$-th power yields
\begin{equation}\label{beforenotationq}
|V_P(a)|^{4k^2}\le P^{8k^2-4k}J_{k,n}(P)^2\prod_{j\in[n]}\sum_{|\mu_j|\le kP^j}\Big|\sum_{|\lambda_j|\le kP^j}e\big(B_j(a)\lambda_j\mu_j\big)\Big|.
\end{equation}

\subsection{Step 4: Double exponential sum estimates}
For every $\alpha\in\mathbb{R}$ and $X\in\mathbb{Z}_+$ we define
\[
D(\alpha,X)\coloneqq X^{-2}\sum_{|m|\le X}\Big|\sum_{|n|\le X}e(\alpha m n)\Big|.
\]
and letting $\Delta(a)\coloneqq\prod_{j\in[n]}D(B_j(a),kP^j)$, we see that
\begin{equation}\label{deltacalculationq}
\begin{split}
\prod_{j\in[n]}\sum_{|\mu_j|\le kP^j}\Big|\sum_{|\lambda_j|\le kP^j}e\big(B_j(a)\lambda_j\mu_j\big)\Big|&=\prod_{j\in[n]}k^2P^{2j}D(B_j(a),kP^j)\\
&=\Delta(a)\prod_{j\in[n]}k^2P^{2j}
=\Delta(a) k^{2n}P^{n(n+1)}.
\end{split}
\end{equation}
Taking into account \eqref{deltacalculationq}, the estimate \eqref{beforenotationq} can be rewritten as
\begin{equation}\label{W1closeq}
|V_P(a)|\le P^{2-\frac{1}{k}}J_{k,n}(P)^{\frac{1}{2k^2}}\Delta(a)^{\frac{1}{4k^2}} k^{\frac{n}{2k^2}}P^{\frac{n(n+1)}{4k^2}}.
\end{equation}
Proceeding as in \cite[pp. 224-225]{IWKO}, for every $X\ge 3$ and $\alpha\in\mathbb{R}$,   we have that
\[
D(\alpha,X)\le 10\min\big\{1,\big(|\alpha|+|\alpha|^{-1}X^{-2}\big)\log(3X)\big\}.
\]
Taking $P\ge 3$ (which we will always be able to arrange), we may apply this estimate to obtain
\begin{equation}\label{deltafirstestnewq}
\Delta(a)\le\Big(\prod_{j\in I}10\big(|B_j(a)|+|B_j(a)|^{-1}(kP^j)^{-2}\big)\log(3kP^j)\Big)
\cdot\Big(\prod_{j\in[n]\setminus I}10\Big)\le \Delta_I(a),
\end{equation}
for any subset $I\subseteq [n]$, where
\begin{align}
\label{eq:3}
\Delta_I(a)\coloneqq\big(10n\log(3kP)\big)^n\prod_{j\in I}\big(|B_j(a)|+|B_j(a)|^{-1}P^{-2j}\big).
\end{align}
Now gathering \eqref{eq:1}, \eqref{W1closeq}, \eqref{deltafirstestnewq} and \eqref{eq:3}, we conclude that for $P\in[3, N^{1/2}]$ and $k, n\in\mathbb Z_+$, we have
\begin{align}
\label{eq:4}
\begin{split}
\Big|\frac{1}{N}\sum_{a=N+1}^{2N}e(\mathfrak h(a)\xi)\Big|\le& \frac{1}{N}\sum_{a=N+1}^{2N}J_{k,n}(P)^{\frac{1}{2k^2}}\Delta_I(a)^{\frac{1}{4k^2}} k^{\frac{n}{2k^2}}P^{\frac{n(n+1)}{4k^2}-\frac{1}{k}}\\
&+2\pi |\xi| P^{2(n+1)}\sup_{\theta\in(N+1, 3N)}\bigg|\frac{\mathfrak h^{(n+1)}(\theta)}{(n+1)!}\bigg|+\frac{2P^2}{N}.
\end{split}
\end{align}
We will apply \eqref{eq:4} with $\mathfrak h(x)=\exp\big((\log x)^c\big)$ whenever $c\in(1, 8/7)$ and a suitable $I\subseteq [n]$.

We conclude with a high-level overview of the proof strategy for Proposition~\ref{expsumestfull}. A certain degree of imprecision is unavoidable here, since our aim is to provide a ``global'' technical intuition for the competing effects of the various parameters. Because these parameters are numerous, and because the constants in our estimates must remain independent of them, we opted for a presentation in which the relevant choices are made explicitly from the outset. We aim to convince the reader that these choices are natural and encourage them to refer back to this discussion while following the proof.

With the right-hand side of \eqref{eq:4} as our starting point, it remains to choose $(P,n,m,k)$ and a set $I\subseteq[n]$ so that all three summands exhibit some subpolynomial decay. The third parameter does not appear explicitly in \eqref{eq:4}, and we simply mean here that one must also make an admissible choice of $m$ when applying Theorem~\ref{VMVT}.

The only factor capable of contributing decay in the first summand is $\Delta_I(a)^{\frac{1}{4k^2}}$; see, for example, the calculation in \eqref{input1qq}. This is not a feature unique to the particular form of the Vinogradov mean value theorem that we use. For exceptionally low frequencies $|\xi|=h(N)^{-1}N^{\rho}$, where $\rho>0$, the second factor in the product defining $\Delta_I(a)$ becomes rather problematic, since
\begin{equation}\label{heuristics}
|B_j(a)|^{-1}P^{-2j}=|\xi|^{-1}\bigg|\frac{h^{(j)}(a)}{j!}\bigg|^{-1}P^{-2j}\simeq N^{-\rho}(N/P^2)^j\text{,}
\end{equation}
where we used the heuristic $|h^{(j)}(a)|/j!\simeq N^{-j}h(N)$ for $a\simeq N$ and $j\in[c(\log N)^{c-1}]$; see Lemma~\ref{basicprop}. By \eqref{heuristics}, in order to obtain decay, one cannot allow $N/P^2$ to grow polynomially in $N$. This already forces us to depart completely from Karatsuba's strategy; see Theorem~1 in \cite{Kar}, where the shift parameter is chosen to be polynomially smaller than $\sqrt{N}$.

We are therefore led to consider choices of the form $P\simeq N^{1/2}e^{-(\log N)^{\tau}}$, $\tau\in(0,1)$, since this is the absolute upper limit on the growth of $P$ imposed by the third summand in \eqref{eq:4}. This, in turn, makes the second summand problematic, since $P$ is raised to a large power. In contrast to the third summand, however, the second summand improves as the order of the Taylor approximation increases, that is, as $n$ becomes larger.

The natural choice for the order of magnitude of $n$ is $(\log N)^{c-1}$. Loosely speaking, this matches the ``local polynomial growth'' of $h(x)=x^{(\log x)^{c-1}}$ at scale $N$, as well as the amount of lower-bound information on the derivatives available to us; see Lemma~\ref{basicprop}. Unfortunately, the large size of the shift parameter $P$ forces us to choose a Taylor approximation of substantially higher order in order to control the second term in \eqref{eq:4}. As a result, the choice of $k$ must also be large; see the conditions of Theorem~\ref{VMVT}. This, in turn, diminishes any saving one hopes to obtain from $\Delta_I(a)^{\frac{1}{4k^2}}$. Moreover, the amplification of the decay coming from the fact that $\Delta_I(a)$ involves a product of $|I|$ small factors does not improve once $n\gtrsim (\log N)^{c-1}$ since one cannot use lower-bound control on derivatives of order substantially larger than the ``local polynomial growth'' of $h$: we must choose $I\subseteq [c(\log N)^{c-1}]$.

Nevertheless, after departing from the natural choice of $n$ and fixing the forms of $P$ and $n$ as
$$
P=\lfloor N^{1/2}e^{-(\log N)^{\tau}}\rfloor\text{,}\quad n=(\log N)^{\sigma}\quad\text{with $\tau\in(0,1)$ and $\sigma>0$,}
$$
a careful analysis of the second term in \eqref{eq:4} leads to the restriction $\tau>c-\sigma$ in order to obtain subpolynomial decay. For $\Delta_I(a)$, choosing the largest possible $I\subseteq [c(\log N)^{c-1}]$ for which the second term in the product defining $\Delta_I$ is dominant (which, unsurprisingly, can be arranged for a positive proportion of $[c(\log N)^{c-1}]$ in the exceptionally small minor arc frequency regime), one obtains the second restriction $\tau<2-c$ in order to obtain decay for $\Delta_I(a)$, which is of the form $e^{-\rho' (\log N)^c}$. These two restrictions imply that $\sigma>2c-2$, and the larger $n$ is, the more severe the deterioration of the saving for $\Delta_I(a)^{\frac{1}{4k^2}}$. We therefore choose
$$
n\simeq(\log N)^{2c-2+\delta}\text{,}\quad m\simeq n(\log\log N)\text{,}\quad k=2mn\text{,}
$$
and obtain
$$
\Delta_I(a)^{\frac{1}{4k^2}}\lesssim e^{-\frac{\rho''(\log N)^c}{(\log\log N)^2(\log N)^{8c-8+4\delta}}}\lesssim e^{-\rho''(\log N)^{8-7c-8\delta}}\text{,}
$$
which gives the required saving, provided that $\delta$ is chosen appropriately and $c<8/7$.

\section{Exponential sum estimates: Proof of Proposition~\ref{expsumestfull}}
\label{sec:exp}
We begin by stating the following exponential sum estimate, essentially due to Karatsuba~\cite{Kar}, see also Br{\"u}dern and Perelli~\cite[Theorem 2]{ApKar}. Here and throughout the work, we write $h(x)\coloneqq\exp\big((\log x)^c\big)$.

\begin{proposition}\label{prop:BP}
Let $c\in(1,3/2)$, $\alpha\in(0,1)$ and $\beta\in(0,c-1)$. Then there exist $\kappa=\kappa(c,\alpha,\beta)\in\mathbb R_+$ and $C=C(c,\alpha,\beta)\in\mathbb R_+$ such that for every $N\in\mathbb{Z}_+$ and $\xi\in\mathbb{R}$ with $h(2N)^{-\alpha}\le|\xi|\le h(N)^{\beta}$, we have
\begin{equation}
\label{eq:5}
\Big|\sum_{N<n\le 2N}e\big(h(n)\xi \big)\Big|\le CNe^{-\kappa(\log N)^{3-2c}}.
\end{equation}
\end{proposition}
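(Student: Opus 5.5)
Proposition~\ref{prop:BP} is attributed to Karatsuba and Br\"udern--Perelli, so the plan is to rederive it from the general inequality \eqref{eq:4} with parameter choices in Karatsuba's spirit. Here the shift parameter may be polynomially smaller than $\sqrt N$, because the frequency is not tiny.

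\textbf{Parameters.} Apply \eqref{eq:4} with $\mathfrak h=h$ and
\[
P=\lfloor N^{\gamma}\rfloor,\qquad n\simeq (\log N)^{c-1},\qquad m\simeq n,\qquad k=n(n+m),
\]
where $\gamma\in(0,1/2)$ is a small constant depending on $c,\alpha,\beta$. Theorem~\ref{VMVT} then gives
\[
J_{k,n}(P)^{1/(2k^2)}P^{\frac{n(n+1)}{4k^2}-\frac1k}\lesssim P^{-\frac1k+\frac{n(n+1)}{4k^2}(1-1/n)^m}\cdot 2^{2m/k}.
\]
Since $(1-1/n)^m\le e^{-m/n}$, the bracketed exponent stays bounded by $O(1/k)$ up to harmless factors. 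The key is the derivative heuristic from Lemma~\ref{basicprop}: $|h^{(j)}(a)|/j!\simeq N^{-j}h(N)$ uniformly for $a\simeq N$ and $j\lesssim (\log N)^{c-1}$, together with the crude upper bound $|h^{(n+1)}(\theta)|/(n+1)!\lesssim N^{-n-1}h(N)\,C^{n}$ for larger orders.

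\textbf{Taylor remainder.} The second term of \eqref{eq:4} is of size roughly
\[
|\xi|\,h(N)\,\bigl(P^2/N\bigr)^{n+1}C^n\le h(N)^{1+\beta}N^{-(1-2\gamma)(n+1)}C^n.
\]
Since $\log h(N)=(\log N)^c$, taking $n\ge K(\log N)^{c-1}$ with $K(1-2\gamma)>1+\beta$ makes this $e^{-(\log N)^c}$-small. The third term is $N^{2\gamma-1}$, which is negligible.

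\textbf{The factor $\Delta_I$ (main obstacle).} Choose $I\subseteq[n]$ to be the set of indices $j$ with $|B_j(a)|\le P^{-j}$ and $|B_j(a)|\ge P^{-2j}N^{\epsilon}$. For such $j$ both terms in the product defining $\Delta_I$ are $\le N^{-\epsilon}$-ish, more precisely $\le P^{-\epsilon' j}$. Since $|B_j(a)|\simeq|\xi|h(N)N^{-j}$, the condition reads
\[
\log(|\xi|h(N))-j\log N\in\bigl[-2j\gamma\log N+\epsilon\log N,\;-j\gamma\log N\bigr].
\]
Using $(1-\alpha)(\log N)^c\lesssim\log(|\xi|h(N))\le(1+\beta)(\log N)^c$, this is an interval of $j$ of length $\gtrsim(\log N)^{c-1}$. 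It sits inside the range $j\lesssim(\log N)^{c-1}$ where the lower bounds on derivatives are valid, and this is where $\beta<c-1$ and $\alpha<1$ enter. Summing the exponents over this interval gives
\[
\Delta_I(a)\le (10n\log(3kP))^n\exp\bigl(-\kappa'(\log N)^{2c-1}\bigr).
\]
The prefactor is only $\exp(O((\log N)^{c-1}\log\log N))$. Then
\[
\Delta_I^{1/(4k^2)}\le\exp\bigl(-\kappa(\log N)^{2c-1-4(c-1)\cdot 2/2}\bigr)=\exp\bigl(-\kappa(\log N)^{3-2c}\bigr),
\]
using $k^2\simeq(\log N)^{4(c-1)}$. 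The exponent $3-2c$ is positive exactly when $c<3/2$. Uniformity of the derivative asymptotics in $j$ up to $c(\log N)^{c-1}$ and the $\theta$-range $(N+1,3N)$ is the delicate point, and Lemma~\ref{basicprop} will be cited for it. The remaining factors $2^{2m/k}k^{n/(2k^2)}$ are $1+o(1)$, which finishes the proof of \eqref{eq:5}.
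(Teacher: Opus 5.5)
The paper gives no proof of this proposition; it quotes it from Karatsuba~\cite{Kar} and Br\"udern--Perelli~\cite{ApKar}. Your plan follows the natural Karatsuba-type route: run \eqref{eq:4} with a shift $P=\lfloor N^{\gamma}\rfloor$ polynomially below $\sqrt N$ and with $n=K(\log N)^{c-1}$. Most of your bookkeeping is sound. The Taylor remainder is negligible once $K(1-2\gamma)>1+\beta$. Your set $I$ is an interval of length $\simeq\gamma(\log N)^{c-1}$ inside $[c(\log N)^{c-1}]$, and summing $\log(|B_j(a)|^{-1}P^{-2j})$ over it gives $\Delta_I(a)\le\exp(-\kappa'(\log N)^{2c-1})$ with $\kappa'\simeq\gamma^2(1-\alpha)^2$.

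\textbf{The gap is the Vinogradov mean value factor, which you dismiss.}

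First, no factor $P^{-1/k}$ survives. Theorem~\ref{VMVT} gives $J_{k,n}(P)^{1/(2k^2)}\le 2^{2m/k}P^{\frac1k-\frac{n(n+1)}{4k^2}+\frac{n(n+1)}{4k^2}(1-1/n)^m}$. Its $P^{1/k}$ cancels the $P^{-1/k}$ in \eqref{eq:4} exactly (compare \eqref{input1qq}). What remains is the \emph{loss} $P^{\frac{n(n+1)}{4k^2}(1-1/n)^m}$.

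Second, this loss is not harmless. With $m=Ln$ and $k=n(n+m)$ it equals
\[
\exp\Big(\frac{\gamma e^{-L}}{4K^2(1+L)^2}(1+o(1))(\log N)^{3-2c}\Big).
\]
Your gain is $\Delta_I(a)^{1/(4k^2)}\le\exp\big(-\frac{\kappa'}{4K^4(1+L)^2}(1+o(1))(\log N)^{3-2c}\big)$, which is of exactly the same order. So the argument depends entirely on the comparison $\gamma K^2e^{-L}<\kappa'\simeq\gamma^2(1-\alpha)^2$. You never make this comparison, and it fails for $m=n$, because $\gamma$ must be small (you need $(1+\beta)/(1-2\gamma)<c$) while $K>1$.

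\textbf{Repair.} Take $m=Ln$ with $L$ a large constant depending on $\gamma,K,\alpha$, say $e^{-L}\le\kappa'/(2\gamma K^2)$. The hypothesis $P\ge n^{n(1-1/n)^{-m}}$ still holds, since $ne^{L}\log n=o(\log N)$. You then get $\kappa\gtrsim\kappa'/(K^4(1+L)^2)>0$.

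You cannot instead kill the loss as in the proof of Proposition~\ref{expsumestfull}, by letting $m/n\simeq\log\log N$. That inflates $k$ and weakens the saving to $\exp(-\kappa(\log N)^{3-2c}/(\log\log N)^2)$, which falls short of \eqref{eq:5}. This weaker bound would incidentally still suffice for the paper's use in \eqref{firstequationq}.
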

For our purposes, the lower range of~$|\xi|$ in Proposition~\ref{prop:BP} is not sufficient. The proof of Proposition~\ref{expsumestfull} reduces to treating exceptionally low frequencies, while maintaining bounds of the same quantitative nature as in~\eqref{eq:5}.

Before proceeding with the proof of Proposition~$\ref{expsumestfull}$, let us collect the following two technical lemmas.

\begin{lemma}[Derivative estimates]\label{basicprop}
For every $c\in(1,3/2)$ there exists $x_0=x_0(c)\in\mathbb R_+$ such that for every real number $x\ge x_0$ and every natural number $s\le c(\log x)^{c-1}$ we have
\begin{equation}\label{lowerboundder}
\frac{h^{(s)}(x)}{s!}\ge \frac{h(x)}{2x^s}\text{.}
\end{equation}
Also, for every $c\in(1,\infty)$, $N\in\mathbb{Z}_{\ge 10}$, and $s\in\mathbb{Z}_+$, we have that for every $x\in(N,2N]$ the following estimate holds
\begin{equation}\label{upperboundder}
\bigg|\frac{h^{(s)}(x)}{s!}\bigg|\le 2^{s+1}N^{-s}h(3N)\text{.}
\end{equation}
\end{lemma}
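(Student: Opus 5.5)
The plan is to write $h(x)=\exp(\phi(\log x))$ with $\phi(t)=t^c$ and to work with Taylor coefficients in a complex neighbourhood of $x$. This route gives \eqref{upperboundder} through a single Cauchy estimate. For \eqref{lowerboundder} it gives a comparison with the binomial coefficients of $(1+y/x)^{\alpha}$, where $\alpha\coloneqq c(\log x)^{c-1}$.

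\emph{Upper bound.} Fix $x\in(N,2N]$ and apply Cauchy's inequality on the circle $|z-x|=N/2$. On this circle we have $\operatorname{Re} z\ge N/2>0$, so the principal branch of $\log z$ and the function $h(z)=\exp((\log z)^c)$ are holomorphic there. Cauchy's inequality gives
\[
\Big|\frac{h^{(s)}(x)}{s!}\Big|\le (N/2)^{-s}\max_{|z-x|=N/2}|h(z)|.
\]
We also have $|h(z)|=\exp(\operatorname{Re}(\log z)^c)\le \exp(|\log z|^c)$, with $|\log z|^2=(\log|z|)^2+(\arg z)^2$. On the circle $|z|\le 5N/2$ and $|\arg z|\le \pi/6$. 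An elementary check shows $(\log \tfrac52 N)^2+(\pi/6)^2\le (\log 3N)^2$ for $N\ge 10$, since $2\log(6/5)\log(\tfrac52N)\ge (\pi/6)^2$ there. Hence $|h(z)|\le h(3N)$, which yields \eqref{upperboundder}, even with $2^s$ in place of $2^{s+1}$.

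\emph{Lower bound.} Put $L=\log x$ and $w=\log(1+y/x)$. Then
\[
h(x+y)=h(x)\,(1+y/x)^{\alpha}\exp\big(R(w)\big),\qquad R(w)\coloneqq (L+w)^c-L^c-cL^{c-1}w .
\]
The $y^s$-coefficient of $(1+y/x)^{\alpha}$ is $\binom{\alpha}{s}x^{-s}$. For integers $s\le\alpha$ we have $\binom{\alpha}{s}=\prod_{i=1}^s\frac{\alpha-i+1}{i}\ge\prod_{i=1}^s\frac{s-i+1}{i}=1$. It therefore suffices to show that the $y^s$-coefficient of $(1+y/x)^\alpha(\exp(R(w))-1)$ is at most $\tfrac12\binom{\alpha}{s}x^{-s}$ in absolute value. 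The natural tool is a Cauchy estimate on a circle $|y|=\eta x$, with $\eta$ chosen to make the binomial coefficient nearly extremal, i.e. $\eta\approx s/(\alpha-s)$ or $\eta$ of order $1$ when $s$ is comparable to $\alpha$. On such a circle $|R(w)|\lesssim L^{c-2}|w|^2$.

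The main obstacle is that a crude Cauchy bound loses against $\binom{\alpha}{s}$ when $s$ is small, which is exactly the regime where the coefficient is most delicate. I would therefore first try the alternative exact route. Differentiating $h^{(s)}(x)=h(x)x^{-s}Q_s(L)$ gives the recursion
\[
Q_{s+1}(L)=(cL^{c-1}-s)\,Q_s(L)+Q_s'(L),\qquad Q_0=1 .
\]
I would prove by induction that
\[
Q_s(L)=\prod_{i=0}^{s-1}(\alpha-i)\,\big(1+E_s\big),\qquad |E_s|\lesssim s^2L^{c-2}/\alpha .
\]
The point is that each derivative of $Q_s$ in $L$ costs a factor of roughly $s/L$ relative to the main term, via the derivatives of $L^{c-1}$ (of size $L^{c-2}$) compared with $\alpha-i$. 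The induction has to handle $\alpha-i$ small, i.e. $s$ close to $\alpha$. There I would absorb the error by noting that all terms are positive sums for $c>1$. Indeed the derivatives of $\phi'$ alternate in sign in a controlled way, so the Faà di Bruno expansion of $h^{(s)}/h$ in powers of $x^{-1}$ has its dominant terms of one sign.

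Since $s\le\alpha\simeq L^{c-1}$, the relative error is $\lesssim L^{c-1}\cdot L^{c-2}=L^{2c-3}$. This tends to $0$ precisely when $c<3/2$, so for $x\ge x_0(c)$ it is at most $1/2$. Together with $\binom{\alpha}{s}\ge1$ this gives $h^{(s)}(x)/s!\ge h(x)/(2x^s)$, which is \eqref{lowerboundder}.
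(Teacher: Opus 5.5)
Your Cauchy-estimate proof of \eqref{upperboundder} is correct and complete. It even gives $2^s$ in place of $2^{s+1}$, for every $c>1$; the paper itself only cites Br\"udern--Perelli here. The lower bound \eqref{lowerboundder}, however, is not established, because the induction you propose does not close as formulated. Write $P_s\coloneqq\prod_{i<s}(\alpha-i)$ and $Q_s=P_s(1+E_s)$. The recursion gives $E_{s+1}=E_s+Q_s'/P_{s+1}$ with $Q_s'=P_s'(1+E_s)+P_sE_s'$, so you need a bound on $\partial_L E_s$, which the hypothesis $|E_s|\lesssim s^2L^{c-2}/\alpha$ does not provide. At the next step you need $\partial_L^2E_s$, and so on, since every step of the recursion consumes one $L$-derivative. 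The sentence ``each derivative of $Q_s$ in $L$ costs a factor of roughly $s/L$'' is exactly what would have to be proved, for derivatives of all orders, so the key estimate is assumed rather than derived. The positivity fallback is also false: already $Q_3=\alpha(\alpha-1)(\alpha-2)+3(\alpha-1)\alpha'+\alpha''$ with $\alpha''=c(c-1)(c-2)L^{c-3}<0$, and ``dominant terms of one sign'' is not an argument.

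You can close the gap without any induction by staying in your first setup but expanding in $w$ instead of applying Cauchy in $y$. Your diagnosis of the $y$-circle is backwards: it must have radius $<x$ because of the branch point of $h$ at $0$, so it loses exponentially when $s$ is comparable to $\alpha$, while small $s$ is harmless. Put $u\coloneqq y/x$ and write $e^{R(w)}=\sum_{j\ge0}r_jw^j$, so that $r_0=1$ and $r_1=0$. Since $(1+u)^{\alpha}(\log(1+u))^j=\partial_\alpha^j(1+u)^\alpha$ and $w^j=O(u^j)$, extracting the $u^s$-coefficient gives the exact finite identity
\[
\frac{Q_s}{s!}=\sum_{j=0}^{s}r_j\,\partial_\alpha^j\binom{\alpha}{s}.
\]
Because $s\le\alpha$, every $\alpha-i\ge1$ for $i<s$. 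Hence $\binom{\alpha+t}{s}=\binom{\alpha}{s}\prod_{i<s}\big(1+\frac{t}{\alpha-i}\big)$ has nonnegative Taylor coefficients in $t$, and so $|\partial_\alpha^j\binom{\alpha}{s}|\le H^j\binom{\alpha}{s}$ with $H\coloneqq\sum_{i<s}(\alpha-i)^{-1}\le 1+\log s\lesssim\log L$. On $|w|\le L/2$ one has $|R(w)|\lesssim_c L^{c-2}|w|^2$. Cauchy's inequality on $|w|=\rho\coloneqq L^{1-c/2}$ then gives $|r_j|\lesssim_c\rho^{-j}$. Therefore $\big|Q_s/(s!\binom{\alpha}{s})-1\big|\lesssim_c\sum_{j\ge2}(H/\rho)^j\lesssim(\log L)^2L^{c-2}$, which is at most $\frac12$ for large $x$. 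Together with $\binom{\alpha}{s}\ge1$ this proves \eqref{lowerboundder}, in fact for every $c\in(1,2)$.
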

\begin{proof}
For a detailed proof, we refer the reader to \cite[Lemmas~14 and~15, Section~8]{ApKar}.
\end{proof}
\begin{lemma}[Ratio estimate]
\label{help}
For real numbers $c\in(1,\infty)$ and $x,y\in[1,\infty)$ with $x\ge y$, we have
\[
\frac{h(yx)}{h(x)}\le \exp\big(c2^{c-1}(\log y)(\log x)^{c-1}\big)\text{.}
\]
\end{lemma}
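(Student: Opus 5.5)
Since $\log h(t)=(\log t)^c$, the lemma is equivalent, after taking logarithms, to
\[
(\log y+\log x)^c-(\log x)^c\le c2^{c-1}(\log y)(\log x)^{c-1}.
\]
Set $u\coloneqq\log x\ge0$ and $v\coloneqq\log y\ge0$. Because $x\ge y\ge1$, we have $0\le v\le u$. If $v=0$ both sides vanish. If $u=0$ then $v=0$ as well. So we may assume $0<v\le u$.

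The plan is to apply the mean value theorem to $g(t)\coloneqq t^c$ on $[u,u+v]$. This gives
\[
(u+v)^c-u^c=c\,\eta^{c-1}v\quad\text{for some }\eta\in(u,u+v).
\]
Since $c>1$, the map $t\mapsto t^{c-1}$ is increasing on $\mathbb R_+$. Hence
\[
\eta^{c-1}\le(u+v)^{c-1}\le(2u)^{c-1}=2^{c-1}u^{c-1},
\]
where the last inequality is the only place we use the hypothesis $v\le u$, i.e.\ $x\ge y$. It follows that
\[
(u+v)^c-u^c\le c2^{c-1}v\,u^{c-1}=c2^{c-1}(\log y)(\log x)^{c-1}.
\]
Exponentiating, and noting that $h(yx)=\exp\big((u+v)^c\big)$ and $h(x)=\exp(u^c)$, yields
\[
\frac{h(yx)}{h(x)}\le\exp\big(c2^{c-1}(\log y)(\log x)^{c-1}\big).
\]

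There is no genuine obstacle. The only point needing care is the degenerate case $u=0$ (so $x=1$), which forces $y=1$ and makes the inequality an equality $1\le1$. If preferred, the mean value step can be replaced by the integral form $\int_u^{u+v}ct^{c-1}\,dt$, which avoids choosing an intermediate point.
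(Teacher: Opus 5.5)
Your proof is correct and is essentially the paper's argument: take logarithms, apply the mean value theorem to $t\mapsto t^c$ on $[\log x,\log x+\log y]$, and bound the intermediate point by $2\log x$ using $y\le x$. The paper separates out the case $y=1$, which plays the same role as your case $v=0$.
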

\begin{proof}
The proof is a straightforward application of the mean value theorem. For $y=1$ the estimate clearly holds, and for $y>1$, by the mean value theorem there exists $\xi\in(\log x ,\log x +\log y )\subseteq(\log x ,2\log x )$ such that $(\log x +\log y )^c-(\log x )^c=(\log y) c\xi^{c-1}$. The bound is now immediate, since
\[
\frac{h(yx)}{h(x)}=\exp\big((\log x +\log y )^c-(\log x )^c\big)=\exp\big((\log y) c\xi^{c-1}\big)\le \exp\big(c2^{c-1}(\log y) (\log x )^{c-1}\big).
\]
This completes the proof of the lemma.
\end{proof}

We are now ready to give a proof of Proposition~$\ref{expsumestfull}$.
\begin{proof}[Proof of Proposition~$\ref{expsumestfull}$]
We fix $c\in(1,8/7)$, $\rho\in(0,\infty)$, and $\theta\in(0,c-1)$, and
allow all implicit constants to depend on these parameters. By the
trivial estimate, it clearly suffices to establish the result for
sufficiently large $N\in\mathbb Z_+$. Moreover, without loss
of generality, we may assume that $\xi>0$, since the case $\xi<0$
follows by complex conjugation. We will proceed in a few steps.
\smallskip
\paragraph{\textbf{Step 1: Applying Karatsuba's estimate in \eqref{eq:5}}}
From now on, we may assume that
\begin{align}
\label{eq:6}
h(N)^{-1}N^{\rho}\le\xi\le h(N)^{-1+\frac{c-1}{4}}.
\end{align}
Otherwise, if $\xi$ satisfies $h(N)^{-1+\frac{c-1}{4}}\le \xi\le h(N)^{\theta}$, then Proposition~\ref{prop:BP} applies directly, yielding the existence of constants $C=C(c,\theta)\in\mathbb R_+$ and $\kappa=\kappa(c,\theta)\in\mathbb R_+$ such that the following bound
\begin{equation}\label{firstequationq}
\Big|\frac{1}{N}\sum_{N< n\le 2N}e( h(n)\xi)\Big|\le C e^{-\kappa(\log N) ^{3-2c}}\le C e^{-(\log N) ^{(3-2c)/2}}
\end{equation}
holds for sufficiently large integers $N\in\mathbb Z_+$.
\smallskip
\paragraph{\textbf{Step 2: Applying Vinogradov's method}}
Fix
$P\coloneqq\lfloor N^{1/2}e^{-(\log N)^{\tau}}\rfloor$ with
\[
\tau\coloneqq 2-c-\frac{\delta}{2}\text{,}\quad\text{where}\quad \delta\coloneqq\frac{8-7c}{16}.
\]
Note that $\delta\in(0,1/16)$, since $c\in(1,8/7)$, and consequently  $\tau\in(2-c-1/32,2-c)$. Also, we have that 
\[
\tau=2-c-\frac{\delta}{2}>2-\frac{8}{7}-\frac{1}{32}>\frac{6}{7}-\frac{1}{32}>0.
\]
Hence, $P\le N^{1/2}$ and, for sufficiently large integers $N\in\mathbb Z_+$, we also have $P\ge N^{1/3}$. We now define $\varepsilon\coloneqq \frac{\delta}{4}\in(0, 1/64)$, and the three additional integers by setting
\begin{equation}\label{choiceparametersq}
n\coloneqq\lfloor 10(\log N)^{2(c-1)+\delta}\rfloor, \quad\text{and}\quad
m\coloneqq n\big\lfloor (1-\varepsilon)(3-2c-\delta-\varepsilon)(\log\log N)\big\rfloor,\quad\text{and}\quad
k\coloneqq 2mn.
\end{equation}
We immediately note that $3-2c-\delta-\varepsilon=3-2c-\frac{5\delta}{4}>3-\frac{16}{7}-\frac{5}{64}=\frac{5}{7}-\frac{5}{64}>0$ since $c\in(1,8/7)$. Hence, for sufficiently large $N\in\mathbb Z_+$, the quantities $k$, $n$, and $m$ are positive integers.

 Consequently, for sufficiently large integers $N\in\mathbb Z_+$, we may apply the bound in~\eqref{eq:4}, with $\mathfrak h(x)=h(x)\coloneqq\exp\big((\log x)^c\big)$, the integers defined in~\eqref{choiceparametersq}, and the frequency $\xi$ satisfying~\eqref{eq:6}, to estimate the exponential sum appearing in Proposition~\ref{expsumestfull}.

\smallskip
\paragraph{\textbf{Step 3: Handling the error terms in \eqref{eq:4}}}
Since $\tau>0$, we note that 
\begin{equation}\label{Boundapproxq}
P^2\le Ne^{-2(\log N)^{\tau}}\le Ne^{-(\log N)^{\tau}}.
\end{equation} 
Thus we may focus on estimating the term involving the derivative  in \eqref{eq:4}. Indeed, for sufficiently large integers $N\in\mathbb Z_+$, it follows from~\eqref{eq:6} that $|\xi|\le h(N)^{-1+\frac{c-1}{4}}\le 1$. Moreover, by Lemma~\ref{basicprop}, we obtain
\begin{align}
\label{eq:7}
\begin{split}
|\xi|P^{2(n+1)}
\sup_{\theta\in(N+1,3N)}
\bigg|\frac{h^{(n+1)}(\theta)}{(n+1)!}\bigg|
&\le
2^{n+2}h(6N)N^{-(n+1)}P^{2(n+1)}\\
&\le e^{12(\log N)^{2c-2+\delta}}e^{(\log N)^c}e^{16(\log N)^{c-1}}e^{-2(\log N)^{\tau}(n+1)}\\
&\le e^{100(\log N)^{2c-2+\delta}}e^{(\log N)^c}e^{-2(\log N)^{\tau+2c-2+\delta}}.
\end{split}
\end{align}
In passing to the second line we used the bound
$h(6N)\le h(N)e^{16(\log N)^{c-1}}$ furnished by Lemma~\ref{help}, \eqref{Boundapproxq}, and $2^{n+2}\le e^{12(\log N)^{2c-2+\delta}}$.

Furthermore, for sufficiently large integers $N\in\mathbb Z_+$, we note that
\begin{equation}
\label{eq:8}
\begin{split}
&e^{100(\log N)^{2c-2+\delta}+(\log N)^c-2(\log N)^{\tau+2c-2+\delta}}\le e^{-(\log N)^{\tau+2c-2+\delta}}
\\
\iff&100(\log N)^{2c-2+\delta}+(\log N)^c\le(\log N)^{\tau+2c-2+\delta}
\\
\iff&100(\log N)^{-\tau}+(\log N)^{2-c-\tau-\delta}\le 1\text{,}
\end{split}
\end{equation}
since $2-c-\tau-\delta=2-c-\big(2-c-\frac{\delta}{2}\big)-\delta=-\frac{\delta}{2}<0$. Combining \eqref{Boundapproxq} with \eqref{eq:7} and \eqref{eq:8}, we conclude that the error terms in~\eqref{eq:4}, namely, the last two summands in its right side, admit the following bound
\begin{align}
\label{eq:9}
2\pi |\xi|P^{2(n+1)}
\sup_{\theta\in(N+1,3N)}
\bigg|\frac{h^{(n+1)}(\theta)}{(n+1)!}\bigg|
+\frac{2P^2}{N}\le 4\pi e^{-(\log N)^{\tau}}\text{.}
\end{align}

\paragraph{\textbf{Step 4: Applying Vinogradov's mean value theorem}}
Recalling the definitions of the integers $k$, $m$, and $n$ from~\eqref{choiceparametersq}, we note that, for sufficiently large integers $N\in\mathbb Z_+$, we have $m\ge n$ and $n(n+m)\le 2mn=k$. Hence, the first hypothesis of Theorem~\ref{VMVT} is satisfied. To verify the second, it suffices to show, for sufficiently large integers $N\in\mathbb Z_+$, that
\begin{align}
\label{eq:10}
P\ge n^{n(1-1/n)^{-m}}.
\end{align}
Taking $N\in\mathbb Z_+$ sufficiently large, we may assume that
$P\ge N^{1/3}$ as well as that
$n=\lfloor 10(\log N)^{2c-2+\delta}\rfloor\ge \frac{64}{8-7c}=\varepsilon^{-1}$, 
$ \log N\ge 10$, and $ (\log N)^{\varepsilon}\ge 60(\log\log N)$.  It
suffices to check that $N^{1/3}\ge n^{n(1-1/n)^{-m}}$. Indeed, using $\frac{n-1}{n}>1-\varepsilon$, note that
\[
(1-1/n)^{-m}=\Big(1+\frac{1}{n-1}\Big)^m\le e^{\frac{m}{n-1}}\le e^{\frac{1}{1-\varepsilon}\cdot \frac{m}{n}}.
\]
Since $2c-2+\delta=2(c-1)+\delta\le \frac{2}{7}+\frac{1}{16}\le 1$ and $\log 10 \le \log\log N$, we obtain
\begin{equation*}
\log(n)n\le
10(\log N)^{2c-2+\delta}\big(\log(10)+(2c-2+\delta)\log\log N\big)\le20(\log N)^{2c-2+\delta}\log\log N.
\end{equation*}
Combining these two bounds and taking into account \eqref{choiceparametersq}, we may write that
\begin{align*}
\log(n)n(1-1/n)^{-m}\le& 20(\log N)^{2c-2+\delta}(\log\log N) e^{\frac{1}{1-\varepsilon}\cdot\frac{m}{n}}
\\
\le&
20(\log N)^{2c-2+\delta}(\log\log N) e^{(3-2c-\delta-\varepsilon)(\log\log N)}
\\
=&20(\log N)^{2c-2+\delta}(\log\log N) (\log N)^{3-2c-\delta-\varepsilon}
\\
=&20(\log\log N)(\log N)^{1-\varepsilon} \le \frac{1}{3}\log N\le \log P,
\end{align*}
since $20(\log\log N)(\log N)^{1-\varepsilon} \le \frac{1}{3}\log N\iff
60(\log\log N)\le (\log N)^{\varepsilon}$. This verifies \eqref{eq:10} as desired.

Thus, for sufficiently large integers $N\in\mathbb Z_+$, Theorem~\ref{VMVT} is applicable and yields
\[
J_{k,n}(P)\le 2^{4km}P^{2k-\frac{n(n+1)}{2}+\frac{n(n+1)}{2}(1-1/n)^m}\text{.}
\]
Using this bound, we estimate each summand in~\eqref{eq:4}. For every integer $a\in(N,2N]$ we have
\begin{equation}\label{input1qq}
\begin{split}
J_{k,n}(P)^{\frac{1}{2k^2}}\Delta_I(a)^{\frac{1}{4k^2}} k^{\frac{n}{2k^2}}P^{\frac{n(n+1)}{4k^2}-\frac{1}{k}}
\le& 2^{\frac{2m}{k}}\Delta_I(a)^{\frac{1}{4k^2}} k^{\frac{n}{2k^2}} P^{\frac{1}{4k^2}n(n+1)(1-1/n)^m} 
\\
\le&2e\Delta_I(a)^{\frac{1}{4k^2}} P^{\frac{1}{4k^2}n(n+1)(1-1/n)^m},
\end{split}
\end{equation}
since $k=2mn$ and consequently  $k^{\frac{n}{2k^2}}\le k^{\frac{1}{k}}\le e$ and $2^{\frac{2m}{k}}\le 2$.

Taking $N\in\mathbb Z_+$ sufficiently large, we have that $m\ge n \ge 5(\log N)^{2c-2+\delta}$, and using 
$(1-1/n)^{m}\le e^{-\frac{m}{n}}$, we may write
\begin{align*}
\frac{1}{4k^2}n(n+1)(1-1/n)^m\le& \frac{2n^2}{16m^2n^2}e^{-\frac{m}{n}}\\
\le&\frac{e}{8m^2}e^{-(1-\varepsilon)(3-2c-\delta-\varepsilon)\log\log N}
\\
\le& n^{-2}(\log N)^{-(1-\varepsilon)(3-2c-\delta-\varepsilon)}
\\
\le& (\log N)^{4-4c-2\delta-(1-\varepsilon)(3-2c-\delta-\varepsilon)}\text{,}
\end{align*}
which implies that
\begin{equation}\label{input2qq}
P^{\frac{1}{4k^2}n(n+1)(1-1/n)^m}\le N^{(\log N)^{4-4c-2\delta-(1-\varepsilon)(3-2c-\delta-\varepsilon)}}=e^{(\log N)^{5-4c-2\delta-(1-\varepsilon)(3-2c-\delta-\varepsilon)}}\le e,
\end{equation}
since $5-4c-2\delta-(1-\varepsilon)(3-2c-\delta-\varepsilon)<(2-2c)-\delta+2\varepsilon=(2-2c)-\delta+\frac{\delta}{2}<0$.
Thus, by~\eqref{input1qq} and~\eqref{input2qq}, for sufficiently large integers $N\in\mathbb Z_+$ and every integer $a\in(N,2N]$, each summand in~\eqref{eq:4} satisfies the following estimate
\begin{equation}
\label{eq:11}
J_{k,n}(P)^{\frac{1}{2k^2}}\Delta_I(a)^{\frac{1}{4k^2}} k^{\frac{n}{2k^2}}P^{\frac{n(n+1)}{4k^2}-\frac{1}{k}}
\le e^3\Delta_I(a)^{\frac{1}{4k^2}}.
\end{equation}
\paragraph{\textbf{Step 5: Derivative estimates}}
Let $F= F(N,\xi)\coloneqq h(N)\xi$.  By Lemma~$\ref{basicprop}$ there exists $x_0(c)\in\mathbb R_+$ such that for every $N\ge\max\{ x_0(c),10\}$, every $a\in(N,2N]$ and every integer
$1\le j\le c(\log N)^{c-1}\le n$, we have
\begin{equation}\label{boundcoefq}
2^{-j-1}F\cdot  N^{-j}\le |B_j(a)|=\Big|\frac{\xi h^{(j)}(a)}{j!}\Big|\le 2^{j+1}\frac{h(3N)}{h(N)} F\cdot N^{-j}.
\end{equation}
Since $1\le j\le c(\log N)^{c-1}$, Lemma~$\ref{help}$ yields 
\begin{equation}\label{bigq}
\begin{split}
2^{j+1}\frac{h(3N)}{h(N)}\le  2^{j+1} e^{c2^{c-1}(\log3)(\log N)^{c-1}}\le e^{2j+8(\log N)^{c-1}}
\le e^{12(\log N)^{c-1}}\text{.}
\end{split}
\end{equation}
Using \eqref{boundcoefq} and \eqref{bigq}, we obtain that for every $j\in \big[c(\log N)^{c-1}\big]$ we have
\begin{align}
\label{eq:12}
e^{-12(\log N)^{c-1}} F\cdot N^{-j}\le |B_j(a)|\le e^{12(\log N)^{c-1}} F\cdot N^{-j}.
\end{align}

\paragraph{\textbf{Step 6: Estimates for $\Delta_I(a)$}}
In view of~\eqref{eq:11}, it remains to estimate $\Delta_I(a)$ for a suitably chosen subset $I\subseteq [n]$. We fix
\[
I\coloneqq\bigg\{j\in\mathbb{N}: \frac{\log F}{\log (N/P)}<j\le c(\log N)^{c-1}\bigg\}\text{.}
\]
Note that for every $j\in I$ we have that $FN^{-j}< F^{-1}N^{j}P^{-2j}$, since 
\[
FN^{-j}< F^{-1}N^{j}P^{-2j}\iff F< (N/P)^j\iff \log(F)< j\log(N/P),
\]
which is guaranteed for $j\in I$. Thus, for this choice of $I\subseteq\big[c(\log N)^{c-1}\big]\subseteq[n]$, using~\eqref{eq:12}, we may estimate the product in~\eqref{eq:3} as follows
\begin{equation}\label{firstboundherenewq}
\begin{split}
\prod_{j\in I}\big(|B_j(a)|+|B_j(a)|^{-1}P^{-2j}\big)&\le \prod_{j\in I}\big(e^{12(\log N)^{c-1}}FN^{-j}+e^{12(\log N)^{c-1}}F^{-1}N^{j}P^{-2j}\big)
\\
&\le e^{12(\log N)^{c-1}|I|}2^{|I|}\prod_{j\in I}F^{-1}N^{j}P^{-2j}
\\
&\le e^{50(\log N)^{2c-2}}\prod_{j\in I}F^{-1}\Big(2e^{2(\log N)^{\tau}}\Big)^{j}\\
&\le e^{50(\log N)^{2c-2}}F^{-|I|}\Big(2e^{2(\log N)^{\tau}}\Big)^{4(\log N)^{2c-2}}
\\
&\le e^{100(\log N)^{2c-2+\tau}}F^{-|I|}\text{,}
\end{split}
\end{equation}
where we have used the estimate $NP^{-2}\le 2e^{2(\log N)^{\tau}}$,
which holds for sufficiently large integers $N\in\mathbb Z_+$.

Next, note that $N/P\ge N^{1/2}$ and, by~\eqref{eq:6}, $F=\xi h(N)\le h(N)^{\frac{c-1}{4}}$, and consequently
\begin{equation*}
\frac{\log F}{\log (N/P)}\le \frac{\frac{c-1}{4}\log(h(N))}{\log (N^{1/2})}\le \frac{c-1}{2}\cdot\frac{(\log N)^{c}}{\log N}<(c-1)(\log N)^{c-1},
\end{equation*}
and thus for sufficiently large integers $N\in\mathbb Z_+$,  we obtain
\[
|I|\ge c\log(N)^{c-1}-(c-1)\log(N)^{c-1}-1=\log(N)^{c-1}-1\ge \frac{1}{2}\log(N)^{c-1}.
\] 
Thus, since $F\ge N^{\rho}\ge 1$ by \eqref{eq:6}, we have that 
\begin{equation}\label{Ffinalnewq}
F^{-|I|}\le N^{-\rho|I|}\le N^{-\frac{\rho}{2}(\log N)^{c-1} }=e^{-\frac{\rho}{2}(\log N)^{c-1}(\log N) }=e^{-\frac{\rho}{2}(\log N)^{c} }.
\end{equation}
Returning to \eqref{firstboundherenewq} and invoking \eqref{Ffinalnewq}, we see that
\begin{equation*}
\prod_{j\in I}\big(|B_j(a)|+|B_j(a)|^{-1}P^{-2j}\big)\le e^{100(\log N)^{2c-2+\tau}}e^{-\frac{\rho}{2}(\log N)^c}
\le e^{-\frac{\rho}{4}(\log N)^c},
\end{equation*}
where the last estimate holds for sufficiently large $N\in\mathbb{Z}_+$ since $2c-2+\tau=2c-2+\left(2-c-\frac{\delta}{2}\right)=c-\frac{\delta}{2}$. Thus, by~\eqref{eq:3}, for sufficiently large integers $N\in\mathbb Z_+$ and every integer $a\in(N,2N]$, we conclude that \begin{equation}\label{finalestanalysisq}
\Delta_I(a)^{\frac{1}{4k^2}}\le(10n\log N)^{\frac{n}{4k^2}}\cdot e^{-\frac{\rho}{16k^2}(\log N)^{c} }.
\end{equation}
The first factor in~\eqref{finalestanalysisq} satisfies $(10n\log N)^{\frac{n}{4k^2}}\le e$, since $n\ge 5(\log N)^{2c-2+\delta}$ and the following estimate 
\begin{equation*}
\frac{n}{4k^2}\log(10n\log N )\le\frac{1}{n^3}\log(10n\log N)\le \frac{1}{n^3}\log\Big(10n^{1+\frac{1}{2c-2+\delta}}\Big) \le 1,
\end{equation*}
is clearly satisfied for sufficiently large $N\in\mathbb Z_+$.

Finally, for the second factor in \eqref{finalestanalysisq} we begin by noting that
\[
k= 2mn\le  200(\log N)^{4c-4+2\delta}(\log\log N),
\]
and taking $N\in\mathbb Z_+$ sufficiently large so that $\frac{\rho(\log N)^{(8-7c)/4}}{64\cdot 10^4(\log\log N)^2}\ge 1$, we may consequently write
\begin{align*}
\frac{\rho}{16k^2}(\log N)^{c}\ge& \frac{\rho (\log N)^{c}}{16\cdot 200^2(\log N)^{8c-8+4\delta}(\log\log N)^2}
\\
=&\frac{\rho}{64\cdot 10^4(\log\log N)^2}(\log N)^{8-7c-4\delta}\ge (\log N)^{(8-7c)/2}.
\end{align*}
Hence, for sufficiently large integers $N\in\mathbb Z_+$ and every integer $a\in(N,2N]$, it follows that
\begin{equation}
\Delta_I(a)^{\frac{1}{4k^2}}\le e\cdot e^{-(\log N)^\frac{8-7c}{2}},
\end{equation}
which combined with \eqref{eq:11} yields
\begin{align*}
J_{k,n}(P)^{\frac{1}{2k^2}}\Delta_I(a)^{\frac{1}{4k^2}} k^{\frac{n}{2k^2}}P^{\frac{n(n+1)}{4k^2}-\frac{1}{k}}
\le e^4 e^{-(\log N)^\frac{8-7c}{2}}.
\end{align*}
Let $\chi\coloneqq\min\big\{\frac{8-7c}{2},\tau\big\}>0$. Combining~\eqref{eq:9} with the preceding bound and~\eqref{eq:4}, we conclude that
\begin{align*}
\Big|\frac{1}{N}\sum_{n=N+1}^{2N}e(h(n)\xi)\Big|\le e^4 e^{-(\log N)^\frac{8-7c}{2}}+4\pi e^{-(\log N)^{\tau}}\le e^5e^{-(\log N)^{\chi}},
\end{align*}
for sufficiently large integers $N\in\mathbb Z_+$,
and the proof of Proposition~\ref{expsumestfull} is complete.
\end{proof}

We close this section by establishing a full  version of the exponential sum estimate from Proposition~\ref{expsumestfull}.

\begin{proposition}\label{expsumestfullfullN}
Let $c\in(1,8/7)$, $\rho'\in(0, \infty)$ and $\theta'\in(0,c-1)$. Then there exist  $\chi'=\chi'(c)\in\mathbb R_+$ and $C=C(c,\theta')\in\mathbb R_+$ such that for every $N\in\mathbb{Z}_+$ and $\xi\in\mathbb{R}$ with $h(N)^{-1}N^{\rho'}\le|\xi|\le h(N)^{\theta'}$ we have
\begin{align}
\label{eq:13}
\Big|\sum_{n\in[N]}e\big(h(n)\xi\big)\Big|\le CNe^{-(\log N)^{\chi'}}.
\end{align}
\end{proposition}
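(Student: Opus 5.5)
We deduce \eqref{eq:13} from Proposition~\ref{expsumestfull} by a dyadic decomposition of $[N]$. We discard the initial segment $n\le N e^{-(\log N)^{\chi_0}}$ for a small $\chi_0>0$ fixed later; it contributes at most $Ne^{-(\log N)^{\chi_0}}$ by the trivial bound. The remaining range is covered by $O((\log N)^{\chi_0})$ dyadic blocks $(M,2M]$ with $Ne^{-(\log N)^{\chi_0}}/2\le M\le N/2$, together with one possibly incomplete top block. Since $\log M=\log N+O((\log N)^{\chi_0})$, Proposition~\ref{expsumestfull} applied at scale $M$ gives a bound $CMe^{-(\log M)^{\chi}}\lesssim Me^{-(\log N)^{\chi}/2}$ on each block. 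Summing over blocks gives $N e^{-(\log N)^{\chi}/2}$, and choosing $\chi_0<\chi$ and $\chi'<\min\{\chi_0,\chi\}$ closes the estimate, provided $\xi$ lies in the admissible range for each $M$.

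\textbf{Frequency ranges.} We need $h(M)^{-1}M^{\rho}\le|\xi|\le h(M)^{\theta}$, given $h(N)^{-1}N^{\rho'}\le|\xi|\le h(N)^{\theta'}$.

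For the upper bound, pick $\theta\in(\theta',c-1)$. Since $(\log M)^c=(\log N)^c-O((\log N)^{c-1+\chi_0})$, we have $h(M)^{\theta}\ge h(N)^{\theta'}$ as long as $c-1+\chi_0<c$, which holds for large $N$.

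The lower bound is the delicate step. Lemma~\ref{help} gives $h(N)/h(M)\le\exp\big(C(\log(N/M))(\log N)^{c-1}\big)$. With $\log(N/M)\le(\log N)^{\chi_0}$, this loss is $\exp(C(\log N)^{c-1+\chi_0})$, which is \emph{not} absorbed by $N^{\rho'}=e^{\rho'\log N}$ unless $c-1+\chi_0<1$. We therefore impose $\chi_0<2-c$. Then for large $N$,
\[
h(M)^{-1}M^{\rho}\le h(N)^{-1}e^{C(\log N)^{c-1+\chi_0}}N^{\rho}\le h(N)^{-1}N^{\rho'}\le|\xi|
\]
with $\rho=\rho'/2$. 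Hence Proposition~\ref{expsumestfull} applies on every block with parameters $\rho'/2$ and $\theta$, uniformly in the block.

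\textbf{Incomplete top block.} The top block has the form $(M,N']$ with $N'\le 2M$, so it needs an estimate for incomplete dyadic sums. I would reprove Proposition~\ref{expsumestfull} for sums over $(M,M+L]$ with $L\le M$, since the Vinogradov argument in Section~\ref{VinMethod} is insensitive to the length of the interval: the shift step works for any such interval, and the derivative bounds only use $a\simeq M$. Alternatively, one can split $(M,N']$ into $O(e^{(\log N)^{\chi_1}})$ pieces of length about $Me^{-(\log N)^{\chi_1}}$ and bound the leftover trivially. Small $N$ are absorbed into the constant $C$.

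The main obstacle is this lower-frequency compatibility between scales $N$ and $M$. It forces the discarded initial range to be only a subpolynomially small fraction of $N$, and it forces the restriction $\chi_0<2-c$ on the exponent of the loss from Lemma~\ref{help}.
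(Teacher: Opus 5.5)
This is essentially the paper's argument: a dyadic decomposition over only $\lesssim(\log N)^{2-c}$ scales (the paper takes $K=\lfloor\frac{\rho'}{16}(\log N)^{2-c}\rfloor$, the borderline exponent with a small constant, where you take $\chi_0<2-c$), a trivial bound on the initial segment, and Lemma~\ref{help} to transfer the frequency window to each block with $\rho=\rho'/2$ and some $\theta\in(\theta',c-1)$. Your incomplete top block is self-inflicted and disappears if you build the blocks downward from $N$ as $(N/2^{j+1},N/2^{j}]$ for $0\le j<K$, as the paper does. Each such block is then a full dyadic block; non-integrality of $N/2^{j+1}$ changes the sum by at most one term. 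So you need neither a short-interval version of Proposition~\ref{expsumestfull} (whose proof would also require an incomplete-interval form of Proposition~\ref{prop:BP}, not just of the Vinogradov part) nor your second alternative, which as stated does not reduce to Proposition~\ref{expsumestfull} because the short pieces are not dyadic intervals.
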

\begin{proof}
We fix $c\in(1,8/7)$, $\rho'\in(0, \infty)$ and $\theta'\in(0,c-1)$, and allow all implicit constants to depend on these parameters. It clearly suffices to prove~\eqref{eq:13} for sufficiently large integers $N\in\mathbb Z_+$. Let $K\coloneqq \big\lfloor \frac{\rho'}{16}(\log N)^{2-c}\big\rfloor$, and estimate as follows
\begin{equation}\label{splitcalc}
\begin{split}
\Big|\sum_{n\in[N]}e\big(h(n)\xi\big)\Big|\le& \sum_{j=0}^{K-1}\Big|\sum_{N/2^{j+1}<n\le N/2^j}e\big(h(n)\xi\big)\Big|+\Big|\sum_{1\le n\le N/2^K
}e\big(h(n)\xi\big)\Big|
\\
\le &\sum_{j=0}^{K-1}\Big|\sum_{N/2^{j+1}<n\le N/2^j}e\big(h(n)\xi\big)\Big|+N2^{-K}.
\end{split}
\end{equation}
We wish to apply Proposition~\ref{expsumestfull} to each dyadic piece above with $\rho=\rho'/2$ and $\theta=\frac{\theta'+(c-1)}{2}\in(\theta',c-1)$. We claim that, for sufficiently large integers $N\in\mathbb Z_+$ and every $j\in\mathbb N_{<K}$, we have
\begin{equation}\label{toapplyexp}
h(N/2^{j+1})^{-1}(N/2^{j+1})^{\rho}\le h(N)^{-1}N^{\rho'},\qquad\text{ and }\qquad h(N)^{\theta'}\le h(N/2^{j+1})^{\theta}.
\end{equation}

To prove the first inequality in~\eqref{toapplyexp}, it suffices to show that $h(N/2^{j+1})^{-1}N^{\rho}\le h(N)^{-1}N^{\rho'}$, which is equivalent to
\[
\frac{h(N)}{h(N/2^{j+1})}\le N^{\rho'/2}.
\]
Note that $N/{2^{j+1}}\ge 2^{j+1}$, since, for sufficiently large integers $N\in\mathbb Z_+$, we have $(2j+2)\log 2\le 2K\log 2\le \log N$, which implies that $N\ge 2^{2j+2}$ as desired. Thus Lemma~$\ref{help}$ is applicable and yields
\begin{align*}
\frac{h(N)}{h(N/2^{j+1})}\le&\exp\Big(c2^{c-1}\log(2^{j+1})(\log(N/2^{j+1}))^{c-1} \Big)
\\
\le& \exp\big(8(j+1)(\log N)^{c-1} \big)
\le\exp\big(8K(\log N)^{c-1} \big)\le\exp\Big(\frac{\rho'}{2}\log N \Big)=N^{\rho'/2}\text{,}
\end{align*}
as desired. To prove the second inequality in~\eqref{toapplyexp}, we note that
\begin{equation*}
h(N)^{\theta'}\le h(N/2^{j+1})^{\theta}\iff \bigg(\frac{h(N)}{h(N/2^{j+1})}\bigg)^{\theta'}\le h(N/2^{j+1})^{\theta-\theta'},
\end{equation*} 
and by the previous estimate we have that 
$\frac{h(N)}{h(N/2^{j+1})}\le N^{\rho}$ for every $j\in\mathbb N_{<K}$, so to establish the estimate it suffices to prove that 
\[
N^{\rho\theta'}\le h\big(N/2^{K}\big)^{\theta-\theta'}.
\]
Taking logarithms on both sides, we see that the last inequality is equivalent to
\begin{equation*}
\rho\theta'\log N\le  (\theta-\theta')\log\big(h\big(N/2^{K}\big)\big)=(\theta-\theta')\big(\log N-K\log2\big)^c,
\end{equation*}
which is clearly true for sufficiently large integers $N\in\mathbb Z_+$ by the definition of $K$.
We have shown that both estimates in~\eqref{toapplyexp} hold for sufficiently large integers $N\in\mathbb Z_+$. Therefore, for every frequency $\xi$ satisfying $h(N)^{-1}N^{\rho'}\le |\xi|\le h(N)^{\theta'}$ and every $j\in\mathbb N_{<K}$, the following inequality holds
\[
h(N/2^{j+1})^{-1}(N/2^{j+1})^{\rho}\le h(N)^{-1}N^{\rho'}\le \xi\le h(N)^{\theta'}\le h(N/2^{j+1})^{\theta}.
\]
We can apply Proposition~\ref{expsumestfull} to conclude that there exists a constant $\chi\in(0,1)$ such that
\[
\Big|\sum_{N/2^{j+1}<n\le N/2^j}e\big(h(n)\xi\big)\Big|\lesssim N/2^{j+1} e^{-(\log (N/2^{j+1}))^\chi}\lesssim N e^{-(\log N)^\chi}.
\]
Taking $\chi'\coloneqq\min\big\{\frac{2-c}{2},\frac{\chi}{2}\big\}>0$ and returning to~\eqref{splitcalc}, we obtain
\begin{align*}
\Big|\sum_{n\in [N]}e\big(h(n)\xi\big)\Big|
\le KN e^{-(\log N)^\chi}+N2^{-K}
\lesssim N e^{-(\log N)^{\chi'}},
\end{align*}
for sufficiently large integers $N\in\mathbb Z_+$. This completes the proof of Proposition~\ref{expsumestfullfullN}.
\end{proof}

\section{One-frequency circle method: Proof of Proposition~\ref{expsumestfullforav}}\label{CircleMethod}
The purpose of this section is to prove
Proposition~\ref{expsumestfullforav}, which will serve as the main input for establishing the
quantitative version of the pointwise convergence result of
Theorem~\ref{thm:mainq}. The analysis is naturally divided
into major and minor arcs, and is completed in
the following two lemmas, which clearly imply Proposition~$\ref{expsumestfullforav}$. We remind the reader that $\varphi\coloneq h^{-1}$ is the compositional inverse of $h(x)=\exp\big((\log x)^c\big)$.
\begin{lemma}[Major arc estimates]\label{MAA}
For every $c\in (1,\infty)$ there exists a constant $C=C(c)\in\mathbb{R}_+$ such that for every $N\in\mathbb{Z}_+$ and $\xi\in\mathbb{T}$ with $\|\xi\|\le h(N)^{-1}N^{\frac{1}{2}}$ we have
\begin{equation}\label{majarcestimate}
\Big|\sum_{n\in [N]}e(\xi\lfloor h(n)\rfloor)-\sum_{2\le m\le h(N)}\varphi'(m)e(\xi m)\Big|\le CN^{\frac{1}{2}}.
\end{equation}
\end{lemma}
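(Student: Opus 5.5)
The plan is to rewrite the first sum by grouping the values $n$ according to $m=\lfloor h(n)\rfloor$, to compare the resulting counting weights with $\varphi'(m)$ via the mean value theorem, and then to use summation by parts together with the smallness of $\|\xi\|$. Since $h$ is increasing with $h(1)=1$, the inverse $\varphi$ is well defined and increasing on $[1,\infty)$. We have $\lfloor h(n)\rfloor=m$ if and only if $m\le h(n)<m+1$, that is, $\varphi(m)\le n<\varphi(m+1)$. Hence
\[
\sum_{n\in[N]}e(\xi\lfloor h(n)\rfloor)=\sum_{1\le m\le h(N)} w_N(m)\,e(\xi m),
\]
where $w_N(m)$ counts the integers $n\in[N]$ with $\varphi(m)\le n<\varphi(m+1)$. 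For $m<\lfloor h(N)\rfloor$ we have $w_N(m)=\lceil\varphi(m+1)\rceil-\lceil\varphi(m)\rceil$, and only the top term $m=\lfloor h(N)\rfloor$ needs truncation at $N$. We write $\lceil \varphi(m)\rceil=\varphi(m)+\eta(m)$ with $\eta(m)\in[0,1)$. By the mean value theorem, $\varphi(m+1)-\varphi(m)=\varphi'(m)+O(\sup_{[m,m+1]}|\varphi''|)$. Because $\varphi'$ is decreasing and $\varphi'(m)\to0$ (since $\varphi(y)=\exp((\log y)^{1/c})$ grows subpolynomially), we get $\sum_{m}|\varphi''|\lesssim \varphi'(2)<\infty$. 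So replacing $\varphi(m+1)-\varphi(m)$ by $\varphi'(m)$ costs $O(1)$ in total, uniformly in $\xi$. The boundary terms at $m=1,2$ and at the top contribute $O(1)$ as well.

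This leaves the key term $\sum_{2\le m\le M}(\eta(m+1)-\eta(m))e(\xi m)$ with $M\simeq h(N)$. Here I will use summation by parts (a discrete Abel shift):
\[
\sum_{m}(\eta(m+1)-\eta(m))e(\xi m)=\sum_m \eta(m)\big(e(\xi(m-1))-e(\xi m)\big)+O(1).
\]
Since $|e(\xi(m-1))-e(\xi m)|\le 2\pi\|\xi\|$ and $|\eta|\le1$, this is $O(1+\|\xi\| h(N))=O(N^{1/2})$ for $\|\xi\|\le h(N)^{-1}N^{1/2}$. This is exactly the $O(|\xi|h(N)+1)$ error mentioned in the introduction.

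The main care point is the bookkeeping rather than any deep step. First, there is the top term $m=\lfloor h(N)\rfloor$, where $w_N(m)=N-\lceil\varphi(m)\rceil+1$. Second, the second sum in \eqref{majarcestimate} runs up to $h(N)$, possibly including a non-integer endpoint issue. Both are $O(1+\varphi'(h(N)))=O(1)$. Third, one must check that $\sum_m \sup_{[m,m+1]}|\varphi''|$ is bounded by a constant depending only on $c$, which I expect to get from monotonicity of $\varphi'$ on $[2,\infty)$ (verified by computing $\varphi''$ explicitly, possibly after discarding finitely many initial $m$ with a $c$-dependent constant). Everything is uniform in $N$, giving the constant $C=C(c)$.
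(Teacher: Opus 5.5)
Your argument is correct, and it takes a genuinely different route from the paper's.

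\textbf{The paper's route.} It works through continuous intermediaries:
\begin{enumerate}
\item It removes the floor on the $n$-side, at a cost of $\|\xi\|N$.
\item It replaces $\sum_{n\in[N]}e(\xi h(n))$ by $\int_1^N e(\xi h(t))\,dt$, at a cost of $1+\|\xi\|h(N)$.
\item It substitutes $u=h(t)$ and discretizes $\int \varphi'(u)e(\xi u)\,du$ back into $\sum_{2\le m\le h(N)}\varphi'(m)e(\xi m)$, using that $\varphi'$ is positive and decreasing, at a cost of $\|\xi\|N+O(1)$.
\end{enumerate}

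\textbf{Your route.} You stay entirely discrete. For $m<\lfloor h(N)\rfloor$, the level-set identity $\#\{n\in[N]:\lfloor h(n)\rfloor=m\}=\lceil\varphi(m+1)\rceil-\lceil\varphi(m)\rceil$ is exact, so no floor-removal step is needed. The smooth discrepancy $\varphi(m+1)-\varphi(m)-\varphi'(m)$ is absorbed by monotonicity of $\varphi'$. The only oscillatory loss comes from the ceiling errors $\eta(m)$, and a single Abel summation turns this into $O(1+\|\xi\|h(N))$.

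\textbf{What each buys.} Your version is shorter. It isolates exactly where the general bound \eqref{majgen} of Remark~\ref{rem:1} loses: the fractional parts of $\varphi$ at integer levels. At $\xi=0$ it also gives $\sum_{2\le m\le h(N)}\varphi'(m)=N+O(1)$ directly, a fact the paper later extracts from \eqref{MA3}. The paper's chain passes through the standard continuous main term $\int e(\xi h(t))\,dt$ and never needs to describe the level sets of $\lfloor h\rfloor$.

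\textbf{A small tightening.} You do not need any control of $\varphi''$. By the mean value theorem, $\varphi(m+1)-\varphi(m)=\varphi'(\zeta_m)$ for some $\zeta_m\in(m,m+1)$. Since $\varphi'$ is decreasing on $(1,\infty)$ (a direct computation with $\varphi(y)=\exp((\log y)^{1/c})$ confirms this), you get $0\le\varphi'(m)-\varphi'(\zeta_m)\le\varphi'(m)-\varphi'(m+1)$. This telescopes to at most $\varphi'(2)$, which settles your third bookkeeping point without discarding any initial terms.
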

\begin{lemma}[Minor arc estimates]\label{mAA}
For every $c\in (1,8/7)$, there exist  constants $C=C(c)\in\mathbb R_+$ and $\chi=\chi(c)\in\mathbb R_+$ such that for every $N\in\mathbb{Z}_+$ and $\xi\in\mathbb{T}$ with $\|\xi\|>h(N)^{-1}N^{\frac{1}{2}}$ we have
\begin{equation}\label{mAest}
\Big|\sum_{n\in [N]}e(\xi\lfloor h(n)\rfloor)\Big|\le CNe^{-(\log N)^{\chi}},\quad\text{ and }\quad \Big|\sum_{2\le m\le h(N)}\varphi'(m)e(\xi m)\Big|\le CNe^{-(\log N)^{\chi}}\text{.}
\end{equation}
\end{lemma}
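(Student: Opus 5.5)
The plan is to prove the two estimates in \eqref{mAest} separately. The weighted sum is handled by summation by parts. The floor sum is reduced, by a standard floor-removal argument, to the exponential sums without floor that Proposition~\ref{expsumestfullfullN} controls on the range $h(N)^{-1}N^{1/2}\le|\eta|\le h(N)^{\theta'}$. Throughout, $N$ may be taken large, since small $N$ are absorbed into $C$.

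\textbf{The weighted sum.} The idea is to split at a threshold $M_0\coloneqq h(N)N^{-1/4}$.

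For $2\le m\le M_0$, use the trivial bound $\sum_{m\le M_0}\varphi'(m)\lesssim\varphi(M_0)$. Since
\[
\log\varphi(M_0)=\big((\log N)^c-\tfrac14\log N\big)^{1/c}=\log N-\tfrac{1}{4c}(\log N)^{2-c}+o(1),
\]
this part is $\lesssim N e^{-\frac{1}{4c}(\log N)^{2-c}}$.

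For $M_0<m\le h(N)$, $\varphi'$ is positive and eventually decreasing. The weight of the $m$-th term is
\[
\varphi'(m)=\frac{\varphi(m)}{c\,m(\log\varphi(m))^{c-1}}.
\]
Apply summation by parts \eqref{eq:31} together with the geometric-sum bound $\big|\sum_{m\le M}e(\xi m)\big|\le (2\|\xi\|)^{-1}$. This gives a bound $\lesssim \varphi'(M_0)\|\xi\|^{-1}$. Using $\varphi'(M_0)\le \varphi(M_0)/M_0\le N\cdot N^{1/4}h(N)^{-1}$ and $\|\xi\|^{-1}< h(N)N^{-1/2}$, this is at most $N^{3/4}$.

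Both pieces are therefore $\lesssim Ne^{-(\log N)^{\chi}}$ for any $\chi<2-c$.

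\textbf{The floor sum.} Write $e(\xi\lfloor h(n)\rfloor)=e(\xi h(n))\,\psi_\xi(\{h(n)\})$, where $\psi_\xi(t)\coloneqq e(-\xi t)$ on $[0,1)$, and regard $\psi_\xi$ as a $1$-periodic function with a single jump at $0$.

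Fix $L\coloneqq \lfloor e^{(\log N)^{\chi'}/2}\rfloor$, with $\chi'$ from Proposition~\ref{expsumestfullfullN} (taken with $\rho'=1/2$ and a fixed $\theta'\in(0,c-1)$). Replace $\psi_\xi$ by a trigonometric polynomial $\sum_{|\ell|\le L}c_\ell\, e(\ell t)$ with $|c_\ell|\lesssim (1+|\ell|)^{-1}$. The approximation is uniform except for $t$ within $1/L$ of $0$; this can be arranged by smoothing the jump at scale $1/L$ and is standard.

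The main term then becomes
\[
\sum_{|\ell|\le L}c_\ell\sum_{n\in[N]}e\big((\xi+\ell)h(n)\big).
\]
For $\ell=0$ we have $|\xi|>h(N)^{-1}N^{1/2}$. For $\ell\ne0$ we have $1/2\le|\xi+\ell|\le L+1\le h(N)^{\theta'}$. So every frequency lies in the range of Proposition~\ref{expsumestfullfullN}, and the main term is $\lesssim (\log L)\, N e^{-(\log N)^{\chi'}}$.

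The error term is bounded by $N/L$ plus the number of $n\in[N]$ with $\|h(n)\|\le 1/L$. By the Erdős--Turán inequality, this count is
\[
\lesssim N/L+\sum_{1\le\ell\le L}\ell^{-1}\Big|\sum_{n\in[N]}e(\ell h(n))\Big|,
\]
and the sums on the right are again covered by Proposition~\ref{expsumestfullfullN}. Altogether the floor sum is $\lesssim Ne^{-(\log N)^{\chi'}/3}$.

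\textbf{Main obstacle.} I expect the delicate step to be the bookkeeping in the floor removal. The truncation $L$ must grow only subpolynomially, so that all shifted frequencies $\xi+\ell$ stay below $h(N)^{\theta'}$ and the losses $\log L$ and $N/L$ are beaten by the merely subpolynomial saving $e^{-(\log N)^{\chi'}}$. At the same time the $\ell=0$ frequency must stay at or above the threshold $h(N)^{-1}N^{1/2}$, which is exactly the minor-arc hypothesis. The final $\chi$ is then the minimum of the exponents obtained in the two parts.
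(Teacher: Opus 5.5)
Your proposal is correct and is essentially the paper's argument. The floor is removed via the truncated Fourier expansion of $t\mapsto e(-\xi\{t\})$, and both the main term (frequencies $\xi+\ell$) and the near-integer error (integer frequencies, handled by Erd\H{o}s--Tur\'an rather than the paper's Fourier expansion of $\min\{1,1/(M\|x\|)\}$) are fed into Proposition~\ref{expsumestfullfullN}. The weighted sum is split into a trivially bounded low range and a high range treated by summation by parts against the geometric sum. Your single cut at $h(N)N^{-1/4}$ and subpolynomial truncation $L$ replace the paper's $e$-adic blocks and its $M\approx h(N)^{(c-1)/4}$, which is immaterial.

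There are two cosmetic slips. First, the correction in $\log\varphi(M_0)$ beyond $-\frac{1}{4c}(\log N)^{2-c}$ is of order $(\log N)^{3-2c}\to\infty$, not $o(1)$. It is negative by Bernoulli's inequality, so $\varphi(M_0)\le Ne^{-\frac{1}{4c}(\log N)^{2-c}}$ still holds. Second, a degree-$L$ approximant of a jump cannot be uniformly $O(1/L)$-accurate down to distance $1/L$ from the jump; one needs distance $\gtrsim(\log L)/L$. Taking the exceptional set to be, say, $\|t\|\le L^{-1/2}$ fixes this and leaves your final bound unchanged.
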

In the following two subsections we provide the proofs of Lemma~\ref{MAA} and Lemma~\ref{mAA}.

\subsection{Major arc estimates}\label{MajArcSubsection}
This short subsection is devoted to the major arc analysis in Lemma~\ref{MAA}. The small size of the major arc allows us to obtain the power-saving estimate in~\eqref{majarcestimate} in a straightforward manner. Before proceeding with the proof, we make a few brief remarks on the admissible size of the major arc, complementing the discussion in the introduction.

\begin{remark}
\label{rem:1}
The proof of inequality~\eqref{majarcestimate} actually shows that, for every $\xi\in[-1/2,1/2)$, we have
\begin{equation}\label{majgen}
\Big|\sum_{n\in [N]}e(\xi\lfloor h(n)\rfloor)-\sum_{ 2\le m\le h(N)}\varphi'(m)e(\xi m)\Big|\lesssim |\xi|h(N)+1.
\end{equation}
\begin{enumerate}[label*={(\arabic*)}, itemsep=2pt]
\item Although the argument is elementary and, in fact, works for every phase function with some degree of smoothness, an approach yielding better dependence on $|\xi|$ and $h(N)$ does not seem to be available.
Taking~\eqref{majgen} into account, to obtain an estimate with a saving factor $\beta(N)$, that is, a bound of the form $\lesssim \beta(N)N$, one must restrict to frequencies satisfying
$|\xi|\le \beta(N)Nh(N)^{-1}$.

\item This highlights that the threshold $|\xi|\le h(N)^{-(1-\varepsilon)}$ for any $\varepsilon>0$, established in Theorem~2 of~\cite{ApKar} (see also Proposition~\ref{prop:BP} with $\alpha=1-\varepsilon$), is inadequate in the present setting, since making the right-hand side arbitrarily small requires
\begin{equation}\label{restrictiononbeta}
h(N)^{-(1-\varepsilon)}\le Nh(N)^{-1}\beta(N)\iff \frac{h(N)^{\varepsilon}}{N}\le \beta(N)\text{,}
\end{equation}
forcing $\beta(N)$ to cease being a saving factor.

\item In contrast to phase functions with superpolynomial growth, such as $h(N)$, let us note that restrictions of the form~\eqref{restrictiononbeta} are, in fact, acceptable for phase functions of polynomial growth. Indeed, one may choose $\varepsilon>0$ sufficiently small to ensure that $\beta(N)$ is a polynomial saving.

\item Finally, since the major arc frequencies must satisfy $|\xi|\le\beta(N)Nh(N)^{-1}$ for some saving factor $\beta(N)$, the minor arc analysis must accommodate frequencies as small as $Nh(N)^{-1}$, independently of the saving factor one seeks to obtain. Theorem~1 in \cite{Kar} cannot treat frequencies this small. More precisely, the aforementioned theorem requires control of the derivatives of $f(x)\coloneqq\xi h(x)$ of the following form
\begin{equation}\label{karconditionfail}
N^{-\tau s}\le \bigg|\frac{f^{(s)}(x)}{s!}\bigg|\le N^{-\tau's}\text{,}\quad x\in[N,2N] \quad\text{for some $\tau<1$.}
\end{equation}
For an effective application of the theorem to these phases, one has to use this condition for an unbounded number of derivatives in $[c(\log N)^{c-1}]$ as $N\to\infty$. However, for $\xi=Nh(N)^{-1}$, an argument identical to that leading to \eqref{boundcoefq} shows that every $s\in[c(\log N)^{c-1}]$ and $x\in[N,2N]$ satisfy
\[
\bigg|\frac{\xi h^{(s)}(x)}{s!}\bigg|\le e^{12(\log N)^{c-1}}N^{-s+1}\text{,}
\]
and thus for any fixed $\tau\in(0,1)$ making  \eqref{karconditionfail} true, we obtain
\[
N^{-\tau s}\le e^{12(\log N)^{c-1}}N^{-s+1}\Longrightarrow -\tau s\le   12(\log N)^{c-2}-s+1\Longrightarrow s\le \frac{12(\log N)^{c-2}+1}{1-\tau}\text{.}
\]
Therefore the set of derivatives satisfying \eqref{karconditionfail} is bounded, and thus a direct application of the theorem becomes ineffective. In fact, with an identical argument one may show that the theorem becomes ineffective even for frequencies as large as $N^{\rho}h(N)^{-1}$ for every fixed $\rho\ge 1$.
\end{enumerate}
\end{remark}
\begin{proof}[Proof of Lemma~$\ref{MAA}$]\label{majsubsection}
All the implicit constants below may depend on $c\in(1, \infty)$. By the standard identification $\mathbb{T}\equiv[-1/2,1/2)$ and by conjugating, we may assume without loss of generality that the frequency $\xi\in\mathbb T$ satisfies $0\le\xi\le h(N)^{-1}N^{\frac{1}{2}}$.
Finally, it is clear that it suffices to establish \eqref{majarcestimate} for $N\ge 2 $, ensuring that the second sum is nonempty, since $h(x)\ge x$.
\smallskip
\paragraph{\textbf{Step 1: Removing the floor function}} Since $N\le h(N)$, we note that
\begin{equation}
\begin{split}
\label{MA1}
\Big|\sum_{n\in [N]}e(\xi\lfloor h(n)\rfloor)-\sum_{n\in [N]}e(\xi h(n))\Big|
\lesssim  \sum_{n\in [N]}\xi\big(h(n)-\lfloor h(n)\rfloor\big)\le \xi N\le h(N)^{-1}N^{\frac{3}{2}}\le N^{\frac{1}{2}}.
\end{split}
\end{equation}

\paragraph{\textbf{Step 2: Approximating the sum by an integral}} We have
\begin{equation}\label{MA2}
\begin{split}
\Big|\sum_{n\in [N]}e(\xi h(n))-\int_{1}^{N}e(\xi h(t))dt\Big|
&\le1+\sum_{n\in [N-1]}\int_{n}^{n+1}\big|e(\xi h(n))-e(\xi h(t))\big|dt\\
&\lesssim1+\xi\sum_{n\in [N-1]}(h(n+1)-h(n))\\
&\le1+\xi h(N)\lesssim N^{\frac{1}{2}},
\end{split}
\end{equation}
since $\big|e(\xi h(n))-e(\xi h(t))\big|\lesssim \xi| h(n)-h(t)|\lesssim \xi (h(n+1)-h(n))$ for any $t\in [n, n+1]$.

\smallskip
\paragraph{\textbf{Step 3: Approximating the integral by a weighted
sum}} Observe that $\sup_{x\in[2,\infty)}|\varphi'(x)|<\infty$ and
that $\varphi'$ is positive and decreasing. Thus, for
$N\ge 2$, we may estimate as follows
\begin{align}
\label{MA3}
\begin{split}
\Big|\int_{1}^Ne(\xi h(t))dt-\sum_{2\le n\le h(N)}\varphi'(n)e(\xi n)\Big|\le\Big|\int_{2}^{h(N)}e(\xi u)\varphi'(u)du-\sum_{3\le n\le h(N)}\varphi'(n)e(\xi n)\Big|+O(1)\\
\le\sum_{n=3}^{\lfloor h(N)\rfloor}\int_{n-1}^n\big|\varphi'(u)e(u\xi)-\varphi'(u)e(n\xi)\big|du+\sum_{n=3}^{\lfloor h(N)\rfloor}\int_{n-1}^n\big|\varphi'(u)e(n\xi)-\varphi'(n)e(n\xi)\big|du+O(1)
\\
\lesssim\sum_{n=3}^{\lfloor h(N)\rfloor}\int_{n-1}^n\varphi'(u)|n\xi-u\xi|du+\sum_{n=3}^{\lfloor h(N)\rfloor}\int_{n-1}^n \big(\varphi'(n-1)-\varphi'(n)\big)du+O(1)
\\
\le\xi\int_2^{h(N)}\varphi'(u)du +\varphi'(2)-\varphi'(\lfloor h(N)\rfloor )+O(1)\le \xi N+ O(1)\lesssim N^{1/2}\text{,}
\end{split}
\end{align}
where the last estimate follows as in~\eqref{MA1}.
Combining \eqref{MA1}, \eqref{MA2}, \eqref{MA3} yields the desired result.
\end{proof}
\subsection{Minor arc estimates}\label{minarcsubsection}
We are now ready to establish the minor arc estimates~\eqref{mAest} from Lemma~\ref{mAA}. The main tool will be Proposition~\ref{expsumestfullfullN}, applied with $\rho'=1/2$.
\begin{proof}[Proof of Lemma~$\ref{mAA}$]\label{pfl43}
By the standard identification $\mathbb{T}\equiv[-1/2,1/2)$ and by conjugating, we may assume without loss of generality that the frequency $\xi\in\mathbb T$ satisfies $h(N)^{-1}N^{\frac{1}{2}}<\xi\le 1/2$.  
It suffices to establish~\eqref{mAest} for sufficiently large integers $N\in\mathbb Z_+$. The argument proceeds in several steps.
\smallskip
\paragraph{\textbf{Step 1: Fourier expansion of the fractional part}}
We begin with the first estimate in \eqref{mAest}. For every integer $M\ge 2$, $x\in[-1/2,1/2)\setminus\{0\}$ and $y\in\mathbb{R}$ we have that
\[
e(-x\{y\})=\sum_{|m|\le M}\frac{1-e(-x)}{2\pi i (m+x)}e(my)+O\bigg(\min\bigg\{1,\frac{1}{M\|y\|}\bigg\}\bigg)\text{,}
\]
where the implied constant is absolute; see \cite[p.~29]{LPE}. Applying this estimate with $M=\big\lfloor h(N)^{\frac{c-1}{4}}-\frac{1}{2}\big\rfloor$, we decompose the sum below into a main term and an error term
\begin{equation}\label{1stapproxnew}
\sum_{n\in [N]}e(\lfloor h(n)\rfloor\xi)=\sum_{n\in [N]}e(h(n)\xi)\sum_{|m|\le M}\frac{1-e(-\xi)}{2\pi i (m+\xi)}e(mh(n))+\sum_{n\in [N]}e(h(n)\xi)g_M(\xi,n),
\end{equation}
where $|g_M(\xi,n)|\lesssim \min\big\{1,\frac{1}{M\|h(n)\|}\big\}$.
\smallskip
\paragraph{\textbf{Step 2: The error term estimate}} We first bound the second term in~\eqref{1stapproxnew}. Note that
\begin{equation}\label{ESTerrmin}
\Big|\sum_{n\in [N]}e(h(n)\xi)g_M(\xi,n)\Big|\lesssim \sum_{n\in [N]} \min\bigg\{1,\frac{1}{M\|h(n)\|}\bigg\}.
\end{equation}
We now use another  standard Fourier expansion; see~\cite[p.~23]{LPE}, for every $M\in\mathbb Z_+$ and $x\in\mathbb R$, we have
\begin{equation}\label{minexp}
\min\bigg\{1,\frac{1}{M\|x\|}\bigg\}=\sum_{m\in\mathbb{Z}}a_me(mx), \quad\text{ where } \quad  |a_m|\lesssim \min\bigg\{\frac{\log M}{M},\frac{M}{|m|^2}\bigg\}.
\end{equation}
We suppress the dependence of the coefficients $a_m$ on $M$. Using the expansion~\eqref{minexp}, we write\begin{equation}\label{easyboundfornoncanc}
\sum_{n\in [N]}\min\left\{1,\frac{1}{M\|h(n)\|}\right\}=\sum_{n\in [N]}\sum_{m\in\mathbb{Z}}a_me(mh(n))\le \sum_{m\in\mathbb{Z}}|a_m|\Big|\sum_{n\in [N]}e(mh(n))\Big|.
\end{equation}
By Proposition~\ref{expsumestfullfullN} with $\theta'=\frac{c-1}{2}\in(0,c-1)$, there exists  $\chi'=\chi'(c)\in(0, 1)$, such that for every $t\in\mathbb{R}$ with $h(N)^{-1}N^{\frac{1}{2}}\le |t|\le h(N)^{\frac{c-1}{2}}$, we have
\begin{equation}\label{applicationoffullexp}
\Big|\sum_{n\in [N]}e(th(n))\Big|\lesssim Ne^{-(\log N)^{\chi'}}.
\end{equation}
We introduce another truncation parameter $T\coloneqq Me^{(\log N)^{\chi'/2}}$, and note that the estimate~\eqref{applicationoffullexp} is applicable for every $t\in\mathbb Z$ with $1\le |t|\le T$, since $h(N)^{-1}N^{1/2}\le N^{-1/2}\le1$, as $h(N)\ge N$, while, for sufficiently large integers $N\in\mathbb Z_+$, we have  $T\le h(N)^{\frac{c-1}{4}}e^{(\log N)^{\chi'/2}}\le h(N)^{\frac{c-1}{2}}$, where the first inequality follows immediately from the definition of $M$ and the second from the fact that $\chi'/2<c$ and 
\[
e^{(\log N)^{\chi'/2}}\le h(N)^{\frac{c-1}{4}}\iff (\log N)^{\chi'/2}\le \frac{c-1}{4}\log(h(N))=\frac{c-1}{4}(\log N)^c.
\]
Combining~\eqref{applicationoffullexp} with~\eqref{easyboundfornoncanc}, the bounds for $|a_m|$ in~\eqref{minexp}, and the fact that $\chi'/2<c$, we obtain, for sufficiently large integers $N\in\mathbb Z_+$, that
\begin{equation}\label{easyboundfinal}
\begin{split}
\sum_{n\in [N]}\min\left\{1,\frac{1}{M\|h(n)\|}\right\}\le&
\sum_{m\in\mathbb{Z}}|a_m|\Big|\sum_{n\in [N]}e(mh(n))\Big|\\
=&|a_0|N+\sum_{1\le |m|\le T}|a_m|\Big|\sum_{n\in [N]}e(mh(n))\Big|+\sum_{|m|> T}|a_m|N
\\
\lesssim&\frac{N\log M}{M}+\sum_{1\leq|m|\leq T}\frac{\log M}{M}Ne^{-(\log N)^{\chi'}}+\sum_{|m|>T}\frac{M}{|m|^2}N
\\
\lesssim&N\cdot\frac{\log M}{M}+N\cdot\frac{Te^{-(\log N)^{\chi'}}\log M}{M}+N\cdot\frac{M}{T}
\lesssim Ne^{-(\log N)^{\chi'/2}},
\end{split}
\end{equation}
since  $\log M\le\log (h(N))\le  (\log N)^2$ and also that $M\ge\frac{1}{2} h(N)^{\frac{c-1}{4}}$.
Combining \eqref{ESTerrmin} with \eqref{easyboundfinal} yields
\begin{align}
\label{eq:14}
\Big|\sum_{n\in [N]}e(h(n)\xi)g_M(\xi,n)\Big|\lesssim Ne^{-(\log N)^{\chi'/2}},
\end{align}
and this completes the estimate of the second summand in~\eqref{1stapproxnew}.
\smallskip
\paragraph{\textbf{Step 3: The main term estimate}}
We now bound the first term in~\eqref{1stapproxnew}. Using
\eqref{applicationoffullexp}  note that
\begin{equation}
\label{eq:15}
\begin{split}
\Big|\sum_{n\in [N]}e(h(n)\xi)&\sum_{|m|\le M}\frac{1-e(-\xi)}{2\pi i (m+\xi)}e(mh(n))\Big|\le \sum_{|m|\le M}\frac{|1-e(-\xi)|}{2\pi |m+\xi|}\Big|\sum_{n\in [N]}e((\xi+m)h(n))\Big|
\\
\lesssim&\Big(\sum_{1\le |m|\le M}\frac{1}{|m|}\Big)\sup_{1\le |m|\le M}\Big|\sum_{n\in [N]}e((\xi+m)h(n))\Big|
+\Big|\sum_{n\in [N]}e(\xi h(n))\Big|
\\
\lesssim& \log M\sup_{h(N)^{-1}N^{1/2}\le |t|\le M+1/2}\Big|\sum_{n\in [N]}e(th(n))\Big|
\\
\le& (\log N)^2\sup_{h(N)^{-1}N^{1/2}\le |t|\le h(N)^{(c-1)/4}}\Big|\sum_{n\in [N]}e(th(n))\Big|
\\
\le& (\log N)^2Ne^{-(\log N)^{\chi'}}\lesssim Ne^{-(\log N)^{\chi'/2}}.
\end{split}
\end{equation}
Combining~\eqref{eq:15}, \eqref{eq:14}, and~\eqref{1stapproxnew}, the first estimate in~\eqref{mAest} follows.
\smallskip
\paragraph{\textbf{Step 4: Estimates for dyadic weighted exponential sums}}
We now turn to the second estimate in~\eqref{mAest}. Assume that $P\in\mathbb Z_+$ is sufficiently large and let $S_{n}(\xi)\coloneqq\sum_{m=\lfloor h(P)\rfloor+1}^ne(m\xi)$ for $n\ge \lfloor h(P)\rfloor+1$.  Then, $|S_{n}(\xi)|\lesssim |\xi|^{-1}$ for any $n\in\mathbb Z_+$, and by summation by parts \eqref{eq:31}, we obtain
\begin{equation}\label{sbpdyad}
\begin{split}
\Big|\sum_{h(P)<n\le h(eP)}\varphi'(n)e(n\xi)\Big|&=
\Big|S_{\lfloor h(eP)\rfloor}(\xi)\varphi'(\lfloor h(eP)\rfloor)-\sum_{n=\lfloor h(P)\rfloor+1}^{\lfloor h(eP)\rfloor -1}S_{n}(\xi)\big(\varphi'(n+1)-\varphi'(n)\big)\Big|
\\
&\lesssim |\xi|^{-1}\varphi'(\lfloor h(eP)\rfloor)+|\xi|^{-1}\sum_{n=\lfloor h(P)\rfloor+1}^{\lfloor h(eP)\rfloor-1}\big(\varphi'(n)-\varphi'(n+1)\big)\\
&\lesssim |\xi|^{-1}\varphi'( h(P))\le N^{-\frac{1}{2}}\frac{Ph(N)}{h(P)},
\end{split}
\end{equation}
where the estimate from the first to the second line follows from the fact that $\varphi'$ is eventually decreasing, while the last estimate follows from the identity $\varphi'(h(P))=\frac{1}{h'(P)}=\frac{P}{h(P)c(\log P)^{c-1}}$ and $\xi>N^{1/2}h(N)^{-1}$.
\smallskip
\paragraph{\textbf{Step 5: Estimates for full weighted exponential sums}}
Let $K\coloneqq \big\lfloor \frac{1}{16}(\log N)^{2-c}\big\rfloor$, and note that by the estimate \eqref{sbpdyad} we obtain that
\begin{align}\label{splitcalcold}
\begin{split}
\Big|\sum_{2\le n\le h(N)}\varphi'(n)e(n\xi)\Big|&\le \sum_{j=0}^{K-1}\Big|\sum_{h(N/e^{j+1})<n\le h(N/e^j)}\varphi'(n)e(n\xi)\Big|+\Big|\sum_{2\le n\le h(N/e^K)
}\varphi'(n)e(n\xi)\Big|
\\
&\lesssim N^{\frac{1}{2}}\sum_{j=0}^{K-1}\frac{h(N)}{h(N/e^{j+1})}+N/e^K+O(1)\text{,}
\end{split}
\end{align}
where the last estimate for the second summand follows from~\eqref{MA3} with $\xi=0$.

Note that, for sufficiently large integers $N\in\mathbb Z_+$, we have $K\ge \frac{1}{32}(\log N)^{2-c}$ and $\frac{N}{e^{j+1}}\ge e^{j+1}$ for every $j\in\mathbb N_{<K}$, since $2j+2\le 2K\le \frac{1}{8}(\log N)^{2-c}\le \log N$. Therefore, by Lemma~\ref{help}, we obtain
\[
\frac{h(N)}{h(N/e^{j+1})}\le\exp\Big(c2^{c-1}\log(e^{j+1})\big(\log \big(N/e^{j+1}\big)\big)^{c-1}\Big)\le \exp\Big(\frac{1}{4}\log N\Big)=N^{1/4}
\]
Returning to \eqref{splitcalcold} and using this bound, we conclude that
\begin{align*}
\Big|\sum_{2\le n\le h(N)}\varphi'(n)e(n\xi)\Big|\lesssim
N^{\frac{3}{4}}(\log N)+Ne^{-\frac{1}{32}(\log N)^{2-c}}\lesssim Ne^{-(\log N)^{\frac{2-c}{2}}}.
\end{align*}
This establishes the second estimate in \eqref{mAest}, and the proof of Lemma~$\ref{mAA}$ is complete.
\end{proof}

\section{Proof of Theorem~\ref{thm:mainq}}\label{sectionOrbit}
In this section we prove Theorem \ref{thm:mainq}. By the Caledr{\'o}n transference principle \cite{C1}, see also \cite{Kosz} the variational inequality from \eqref{eq:29}  follows from its integer shift system counterpart. Namely, it suffices to prove 
that for all $p\in(1, \infty)$ and $r\in(2, \infty)$ there exists  $C=C(h, p)\in\mathbb R_+$ such that for all $f\in \ell^p(\mathbb Z)$, we have 
\begin{equation}
\label{eq:16}
\big\|V^r\big(A_{N;\mathbb Z}^{\lfloor h\rfloor}f:N\in\mathbb{Z}_+\big)\big\|_{\ell^p(\mathbb{Z})}\le C\frac{r}{r-2} \|f\|_{\ell^p(\mathbb{Z})},
\end{equation}
where $A_{N;\mathbb Z}^{\lfloor h\rfloor}$ was defined in \eqref{eq:32}. The key ingredients are Proposition~\ref{expsumestfullforav} and the following lemma, ultimately allowing us to reduce the problem to the $r$-variational estimates for the discrete Hardy--Littlewood averaging operator, established in~\cite[Theorem B]{JKRW}.
\begin{lemma}\label{correctosccomp}
Let $(\lambda_s^k)_{k, s\in\mathbb{Z}_+}$ be non-negative real numbers satisfying the following: 
\begin{itemize}
\item[i)] There exists $\Lambda\in\mathbb R_+$ such that for every $k\in\mathbb Z_+$ we have that $\sum_{s=1}^{\infty}\lambda^k_s=\Lambda$.
\item[ii)] For any fixed $N\in\mathbb{Z}_+$ we have that $\sum_{s=1}^{N}\lambda_{s}^k$ is decreasing in $k$.
\end{itemize}
Then for any $r\in[1,\infty)$ and any sequence $(a_n)_{n\in\mathbb{Z}_+}$ of complex numbers we have
\[
V^r\Big(\sum_{s=1}^{\infty}\lambda_s^ka_s:\,k\in\mathbb{Z}_+\Big)\le \Lambda \cdot V^r(a_n:\,n\in\mathbb{Z}_+).
\] 
\end{lemma}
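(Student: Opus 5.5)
The approach is Abel summation in the index $s$: rewrite each average $\sum_s\lambda_s^k a_s$ through the partial sums $\Lambda_N^k\coloneqq\sum_{s=1}^N\lambda_s^k$. Then the differences between two values of $k$ become a nonnegative combination of differences of the sequence $(a_n)$, and the $r$-variation can be controlled by a convexity (Minkowski-type) argument. We may assume $V^r(a_n:n\in\mathbb Z_+)<\infty$. Then $(a_n)$ is bounded and converges to some limit $a_\infty$, so all the series below converge absolutely.

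First, by hypothesis (i) the total mass is $\Lambda$ for every $k$, so we may subtract the constant: $\sum_s\lambda_s^ka_s-\Lambda a_\infty=\sum_s\lambda_s^k(a_s-a_\infty)$. Summation by parts gives
\[
\sum_{s}\lambda_s^k(a_s-a_\infty)=\sum_{N\ge1}\Lambda_N^k\,(a_N-a_{N+1}),
\]
where convergence follows from $\Lambda_N^k\le\Lambda$ and a truncation argument: the boundary term $\Lambda_N^k(a_{N+1}-a_\infty)$ tends to zero. Hence, for $k<k'$,
\[
b_{k'}-b_k=\sum_{N\ge1}(\Lambda_N^{k}-\Lambda_N^{k'})(a_{N+1}-a_N),
\]
where $b_k$ denotes the $k$-th average. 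By (ii) each weight $w_N\coloneqq\Lambda_N^{k}-\Lambda_N^{k'}$ is nonnegative. Set $G(t)\coloneqq\Lambda^{k}_{\lfloor t\rfloor}$ and similarly for $k'$. Then $b_{k'}-b_k$ can be written as $\int (a_{\lfloor\cdot\rfloor})$ against the signed measure $d(G'-G)$, which is the difference of two probability-type measures (each of total mass $\Lambda$, namely $\mu_k=\sum_s\lambda^k_s\delta_s$).

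The cleanest way to conclude is a coupling. Hypothesis (ii) says exactly that $\mu_{k}$ is stochastically dominated by $\mu_{k'}$ for $k<k'$, i.e.\ their distribution functions are ordered. Use the quantile coupling: for $u\in(0,\Lambda)$ let $q_k(u)\coloneqq\min\{N:\Lambda_N^k\ge u\}$. Then $b_k=\int_0^\Lambda a_{q_k(u)}\,du$. By (ii), for each fixed $u$ the map $k\mapsto q_k(u)$ is nondecreasing. Given $k_0<\dots<k_J$, Minkowski's integral inequality gives
\[
\Big(\sum_j|b_{k_j}-b_{k_{j-1}}|^r\Big)^{1/r}\le\int_0^\Lambda\Big(\sum_j|a_{q_{k_j}(u)}-a_{q_{k_{j-1}}(u)}|^r\Big)^{1/r}du.
\]
For fixed $u$, the indices $q_{k_j}(u)$ are nondecreasing in $j$. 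Repeated equal indices contribute zero, so the inner sum is bounded by $V^r(a_n)^r$. The right-hand side is therefore at most $\Lambda V^r(a_n)$. Taking the supremum over $J$ and over the chains $k_0<\dots<k_J$ finishes the proof. The main care point is the justification of $b_k=\int_0^\Lambda a_{q_k(u)}du$, which is the pushforward of Lebesgue measure on $(0,\Lambda)$ under $q_k$ being $\mu_k$, and the monotonicity of $q_k$ in $k$. Both follow directly from (i) and (ii), with (i) ensuring $q_k(u)<\infty$ for $u<\Lambda$. This quantile route also makes the Abel-summation step above unnecessary and avoids any need for $a_\infty$.
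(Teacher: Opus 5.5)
Your proof is correct. It rests on three steps: the quantile representation $b_k=\int_0^\Lambda a_{q_k(u)}\,du$ (valid because $(a_n)$ is bounded and (i) gives $q_k(u)<\infty$), the monotonicity of $k\mapsto q_k(u)$ coming from (ii), and Minkowski's integral inequality; together these give exactly $\Lambda\,V^r(a_n:n\in\mathbb Z_+)$. The paper does not prove this lemma itself but cites \cite[Lemma~2]{UA}, and your coupling-plus-Minkowski argument is essentially the standard proof behind that lemma, so the Abel-summation preamble can simply be dropped, as you note.
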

\begin{proof}
For the proof we refer to  \cite[Lemma~2]{UA}.
\end{proof}

\begin{proof}[Proof of inequality \eqref{eq:16}] We fix $p\in(1, \infty)$ and $r\in(2, \infty)$. Fix $p\in(1,\infty)$ and choose $p_0>1$, sufficiently close to $1$, so that $p\in(p_0,p_0')$. Next, choose $\tau\in(0,1)$ such that $\tau<\frac{1}{2}\min\{p_0-1,1\}$.

The argument will proceed in a few steps.
\smallskip
\paragraph{\textbf{Step 1: Passing to sublacunary sequences}} Following~\cite[Lemma~1.3]{JSW}, we decompose~\eqref{eq:16} into the long and short variations (the first and second terms, respectively) as follows
\begin{align*}
\big\|V^r\big(A_{N;\mathbb Z}^{\lfloor h\rfloor}f:N\in\mathbb{Z}_+\big)\big\|_{\ell^p(\mathbb{Z})}&\lesssim
\big\|V^r\big(A_{\lfloor 2^{k^{\tau}}\rfloor;\mathbb Z}^{\lfloor h\rfloor}f:k\in\mathbb{N}\big)\big\|_{\ell^p(\mathbb{Z})}\\
&+\Bigg\|\bigg(\sum_{k=0}^{\infty}V^2\Big(A_{N;\mathbb Z}^{\lfloor h\rfloor}f:N\in\big[\big\lfloor 2^{k^{\tau}}\big\rfloor,\big\lfloor 2^{(k+1)^{\tau}}\big\rfloor\big)\Big)^2\bigg)^{1/2}\Bigg\|_{\ell^p(\mathbb{Z})}.
\end{align*}
The advantage of this decomposition is that the short variations are taken over intervals of sublacunary length. Combined with the estimate
\begin{align}
\label{eq:17}
\big\|A_{N+1;\mathbb Z}^{\lfloor h\rfloor}f-A_{N;\mathbb Z}^{\lfloor h\rfloor}f\big\|_{\ell^p(\mathbb{Z})}
\lesssim N^{-1}\|f\|_{\ell^p(\mathbb{Z})},
\end{align}
this implies, by the argument used in~\cite[Estimate~(3.19)]{MSZ3}, that
\[
\Bigg\|\bigg(\sum_{k=0}^{\infty}V^2\Big(A_{N;\mathbb Z}^{\lfloor h\rfloor}f:N\in\big[\big\lfloor 2^{k^{\tau}}\big\rfloor,\big\lfloor 2^{(k+1)^{\tau}}\big\rfloor\big)\Big)^2\bigg)^{1/2}\Bigg\|_{\ell^p(\mathbb{Z})}
\lesssim \|f\|_{\ell^p(\mathbb{Z})}.
\]
\paragraph{\textbf{Step 2: Passing to weighted averages}} Let $\varphi:[1,\infty)\to(0,\infty)$ denote the compositional inverse of $h$, given explicitly by
$\varphi(x)\coloneqq \exp\big((\log x)^{1/c}\big)$. Observe that $\varphi'$ is defined on $(1,\infty)$ but not at $x=1$. For the purposes of our argument, we adopt the convention $\varphi'(1)\coloneqq \varphi'(2)$. Then, for every $x\in\mathbb Z$, $N\in\mathbb Z_+$, and any finitely supported function $f\colon\mathbb Z\to\mathbb C$, we define the weighted averages by setting
\begin{align*}
A_{N; \varphi', \mathbb Z}^{{\rm n}}f(x)\coloneqq \frac{1}{\Phi(N)}\sum_{n\in[h(N)]}\varphi'(n)f(x-n),
\end{align*}
where $\Phi(N)\coloneqq \sum_{n\in[h(N)]}\varphi'(n)$. By \eqref{MA3} with $\xi=0$, it is not difficult to see  that there exists a constant $C\in\mathbb R_+$ such that $|\Phi(N)-N|\le C$ for every $N\in\mathbb Z_+$. Therefore, if $B_N\in\big\{A_{N;\mathbb Z}^{\lfloor h\rfloor}, A_{N; \varphi', \mathbb Z}^{{\rm n}}\big\}$ and $q\in\{p_0, p_0'\}$, then for every $f\in\ell^q(\mathbb Z)$, we have 
\begin{align}
\label{eq:19}
\|B_Nf\|_{\ell^q(\mathbb Z)}\lesssim \|f\|_{\ell^q(\mathbb Z)}.
\end{align}
Now by simple properties of $r$-variations, see \eqref{eq:25}, we obtain that
\begin{align*}
\big\|V^r\big(A_{\lfloor 2^{k^{\tau}}\rfloor;\mathbb Z}^{\lfloor h\rfloor}f:k\in\mathbb{N}\big)\big\|_{\ell^p(\mathbb{Z})}\lesssim
\big\|V^r\big(A_{\lfloor 2^{k^{\tau}}\rfloor; \varphi',\mathbb Z}^{{\rm n}}f:k\in\mathbb{N}\big)\big\|_{\ell^p(\mathbb{Z})}
+\sum_{k=0}^{\infty}\big\|A_{\lfloor 2^{k^{\tau}}\rfloor;\mathbb Z}^{\lfloor h\rfloor}f-A_{\lfloor 2^{k^{\tau}}\rfloor; \varphi', \mathbb Z}^{{\rm n}}f\big\|_{\ell^p(\mathbb{Z})}.
\end{align*}
The last series is summable over $k\in\mathbb N$, since
\begin{align}
\label{eq:20}
\big\|A_{\lfloor 2^{k^{\tau}}\rfloor;\mathbb Z}^{\lfloor h\rfloor}f-A_{\lfloor 2^{k^{\tau}}\rfloor; \varphi', \mathbb Z}^{{\rm n}}f\big\|_{\ell^p(\mathbb{Z})}\lesssim k^{-2}\|f\|_{\ell^p(\mathbb{Z})}.
\end{align}
Indeed, by \eqref{eq:19} for $q\in\{p_0, p_0'\}$ observe that
\begin{align}
\label{eq:21}
\big\|A_{N;\mathbb Z}^{\lfloor h\rfloor}f-A_{N; \varphi', \mathbb Z}^{{\rm n}}f\big\|_{\ell^q(\mathbb{Z})}\lesssim \|f\|_{\ell^q(\mathbb{Z})}.
\end{align}
For $q=2$, there exists $\chi\in(0,1)$ such that the following stronger estimate holds
\begin{align}
\label{eq:22}
\big\|A_{N;\mathbb Z}^{\lfloor h\rfloor}f-A_{N; \varphi', \mathbb Z}^{{\rm n}}f\big\|_{\ell^2(\mathbb{Z})}\lesssim e^{-(\log N)^{\chi}}\|f\|_{\ell^2(\mathbb{Z})}.
\end{align}
By Plancherel's theorem, inequality~\eqref{eq:22} immediately follows from Proposition~\ref{expsumestfullforav} and the fact that $|\Phi(N)-N|=O(1)$. Now, applying~\eqref{eq:21} and~\eqref{eq:22} with $N=\lfloor 2^{k^{\tau}}\rfloor$ and interpolating between these two bounds, we obtain~\eqref{eq:20}. The problem is now reduced to proving the following inequality
\begin{align}
\label{eq:23}
\big\|V^r\big(A_{N; \varphi',\mathbb Z}^{{\rm n}}f:N\in\mathbb{Z}_+\big)\big\|_{\ell^p(\mathbb{Z})}\lesssim_p \frac{r}{r-2} \|f\|_{\ell^p(\mathbb{Z})}.
\end{align}
\paragraph{\textbf{Step 3: Passing to the Hardy--Littlewood averages}} We now prove inequality \eqref{eq:23}. For every $x\in\mathbb Z$, $N\in\mathbb Z_+$, and any finitely supported function $f\colon\mathbb Z\to\mathbb C$, we define the classical  Hardy--Littlewood averages by setting
\begin{align*}
A_{N;\mathbb Z}^{{\rm n}}f(x)\coloneqq \frac{1}{N}\sum_{n\in[N]}f(x-n).
\end{align*}
By summation by parts, for every $k\in\mathbb Z_+$, we have
\begin{equation}\label{difsbp}
\begin{split}
A_{k; \varphi',\mathbb Z}^{{\rm n}}f(x)&=\sum_{s=1}^{\lfloor h(k)\rfloor}\frac{\varphi'(s)}{\Phi(k)}f(x-s)\\
&=A_{\lfloor h(k)\rfloor;\mathbb Z}^{{\rm n}}f(x)\frac{\lfloor h(k)\rfloor\varphi'(\lfloor h(k)\rfloor)}{\Phi(k)}
+\sum_{s=1}^{\lfloor h(k)\rfloor-1}A_{s;\mathbb Z}^{{\rm n}}f(x)\frac{s(\varphi'(s)-\varphi'(s+1))}{\Phi(k)}\\
&=\sum_{s=1}^{\infty}\lambda^k_{s}A_{k;\mathbb Z}^{{\rm n}}f(x),
\end{split}
\end{equation}
where
\[ 
\lambda_{s}^k\coloneqq \left\{
\begin{array}{ll}     
\frac{s(\varphi'(s)-\varphi'(s+1))}{\Phi(k)}
&
     \text{if }1 \le s\le \lfloor h(k)\rfloor-1,\\
      \frac{\lfloor h(k)\rfloor\varphi'(\lfloor h(k)\rfloor)}{\Phi(k)}
&\text{if }s=\lfloor h(k)\rfloor,\\
      0&\text{if }s>\lfloor h(k)\rfloor.\\
\end{array} 
\right. 
\]
Since the sequence $\varphi'(n)$ is positive and nonincreasing, we obtain that $\lambda_{s}^k\ge 0$, and by  summation by parts we may write
\begin{equation}\label{easysbp}
\sum_{s=1}^{\infty}\lambda_s^k=\sum_{s=1}^{\lfloor h(k)\rfloor-1}\frac{s(\varphi'(s)-\varphi'(s+1))}{\Phi(k)}
+\frac{\lfloor h(k)\rfloor\varphi'(\lfloor h(k)\rfloor)}{\Phi(k)}=1\text{.}
\end{equation}
Finally, we note that, for any fixed $N\in\mathbb N$, the sequence $\big(\sum_{s=1}^N\lambda_s^k\big)_{k\in\mathbb Z_+}$ is nonincreasing in $k$, since
\[
\sum_{s=1}^N\lambda_s^k=\left\{
\begin{array}{ll}
\sum_{s=1}^N \frac{s(\varphi'(s)-\varphi'(s+1))}{\Phi(k)}
&\text{if }1\le N\le \lfloor h(k)\rfloor-1,\\
1&\text{if }N\ge \lfloor h(k)\rfloor,\\
\end{array}
\right.
\]
and, for any $1\le N\le \lfloor h(k)\rfloor-1$, the nonnegativity of the summands and~\eqref{easysbp} imply that
\[
\sum_{s=1}^N \frac{s(\varphi'(s)-\varphi'(s+1))}{\Phi(k)}
\le 1.
\]
Thus Lemma~$\ref{correctosccomp}$ is applicable and yields that
\begin{align*}
\big\|V^r\big(A_{N; \varphi', \mathbb Z}^{{\rm n}}f:N\in\mathbb{Z}_+\big)\big\|_{\ell^p(\mathbb{Z})}&=\Big\|V^{r}\Big(\sum_{s=1}^{\infty}\lambda^k_{s}A_{s;\mathbb Z}^{\rm n}f:\,k\in\mathbb{Z}_+\Big)\Big\|_{\ell^p(\mathbb{Z})}\\
&\lesssim \|V^r(A_{N;\mathbb Z}^{\rm n} f:N\in\mathbb{Z}_+)\|_{\ell^p(\mathbb{Z})}\lesssim\frac{r}{r-2} \|f\|_{\ell^p(\mathbb{Z})},
\end{align*}
where the last inequality follows from \cite[Theorem B]{JKRW}. This complete the proof of the theorem.
\end{proof}


\begin{thebibliography}{99}

\bibitem{Bellow2}
\textsc{M.~Akcoglu, A.~Bellow, R.L.~Jones, V.~Losert, K.~Reinhold-Larsson, M.~Wierdl.} \newblock{The strong sweeping out property
for lacunary sequences, Riemann sums, convolution powers, and related
matters.}
\newblock{Ergodic Theory and Dynamical Systems, 16(2),
207–253. (1996). doi:10.1017/S0143385700008798}

\bibitem{frpr} 
\textsc{E.~Bahnson, L.~Daskalakis, A.~Dohadwala, I.~Shah.}
\newblock{Pointwise Ergodic Theorems Along Fractional Powers of Primes.}
\newblock{International Mathematics Research Notices, Volume 2025, Issue 15,
August 2025.}


\bibitem{Bel}
\textsc{A.~Bellow.}
\newblock {Measure Theory
Oberwolfach 1981. Proceedings of the Conference held at Oberwolfach,
June 21-27, 1981.}
\newblock {Lecture Notes in Mathematics {\bf
945}, editors D. K\"olzow and D. Maharam-Stone. Springer-Verlag Berlin
Heidelberg (1982).}
\newblock {Section: Two problems submitted by
A.~Bellow, pp.~429--431.}


\bibitem{Bellow}
\textsc{A.~Bellow.}
\newblock{On ``bad universal'' sequences in
ergodic theory (II).}
\newblock{In: Belley, JM., Dubois, J., Morales, P. (eds)
Measure Theory and its Applications. Lecture Notes in Mathematics, vol
1033. Springer, Berlin,
Heidelberg. https://doi.org/10.1007/BFb0099847, (1983).}

\bibitem{Bir}
\textsc{G.~Birkhoff.}
\newblock{Proof of the ergodic
theorem.}
\newblock{Proc. Natl. Acad. Sci. USA~{\bf 17} (1931), no.~12, pp.~656--660.}

\bibitem{BH}
\textsc{J.R.~Blum, D.L.~Hanson.}
\newblock{On the mean ergodic theorem for subsequences.}
\newblock{Bull. Amer. Math. Soc. {\bf 66} (1960), no. 4, pp.~308--311.}

\bibitem{BKQW}
\textsc{M. Boshernitzan, G. Kolesnik, A. Quas, M. Wierdl.}
\newblock{Ergodic averaging sequences.}
\newblock{J. Anal. Math. {\bf 95} (2005), pp.~63--103.}

\bibitem{Bowie}
\textsc{M. Boshernitzan, M. Wierdl.}
\newblock{Ergodic theorems along sequences and Hardy fields.}
\newblock{Proc. Nat. Acad. Sci. U.S.A. {\bf 93} (1996), no. 16,  pp. 8205--8207.}

\bibitem{B1}
\textsc{J.~Bourgain.}
\newblock{On the maximal ergodic
theorem for certain subsets of the integers.}  \newblock{Israel
J.~Math.~{\bf 61} (1988), pp.~39--72.}
		
\bibitem{B2}
\textsc{J.~Bourgain.}
\newblock{On the pointwise ergodic
theorem on $L^p$ for arithmetic sets.}  \newblock{Israel J.~Math.~{\bf
61} (1988), pp.~73--84.}
		
\bibitem{B3}
\textsc{J.~Bourgain.}
\newblock{Pointwise ergodic
theorems for arithmetic sets. With an appendix by the author,
H.~Furstenberg, Y.~Katznelson, and D.S.~Ornstein.}
\newblock{Inst. Hautes Etudes Sci. Publ. Math.~{\bf 69} (1989),
pp.~5--45.}

\bibitem{BDG} \textsc{J.~Bourgain, C.~Demeter, L.~Guth.}
\newblock{Proof of the main conjecture in Vinogradov's Mean Value
Theorem for degrees higher than three.}  \newblock{Ann. of Math.~{\bf
184} (2016), no.~2, pp.~633--682.}		
    
\bibitem{BM}
\textsc{Z.~Buczolich, R.D.~Mauldin.}  \newblock{Divergent
square averages.}
\newblock{Ann. of Math.~{\bf 171} (2010), no.~3,
pp.~1479--1530.}

\bibitem{ApKar}
\textsc{J.~Brüdern, A.~Perelli.}
\newblock{Goldbach Numbers in Sparse Sequences.}
\newblock{Annales de l’institut Fourier, 48, (1998), 353-378. 
https://doi.org/10.5802/aif.1621.}

\bibitem{C1}
\textsc{A.~Calder\'{o}n.}
\newblock{Ergodic theory and
translation invariant operators.}
\newblock{Proc. Natl. Acad. Sci. USA~{\bf 59} (1968), pp.~349--353.}

\bibitem{C}
\textsc{M.~Christ.}
\newblock{A weak type $(1,1)$
inequality for maximal averages over certain sparse sequences.}
\newblock{Preprint: arXiv:1108.5664.}


\bibitem{WT11LD}
\textsc{L.~Daskalakis.}
\newblock{Weak-type $(1,1)$ inequality for discrete maximal functions and pointwise ergodic theorems along thin arithmetic sets.}
\newblock{J Fourier Anal Appl 30, 37 (2024).}

\bibitem{brpoly}
\textsc{L.~Daskalakis.}
\newblock{Pointwise convergence of ergodic averages along quadratic bracket polynomials.} 
\newblock{Preprint: arXiv:2510.27590.}



\bibitem{Fur3}
\textsc{H.~Furstenberg.}
\newblock{Problems Session, Conference on Ergodic Theory and Applications.}  \newblock{University
of New Hampshire, Durham, NH, June 1982.}

\bibitem{Krengel}
\textsc{U.~Krengel.}
\newblock{On the Individual Ergodic Theorem for Subsequences.}
\newblock{Ann. Math. Stat. {\bf 42} (1971), no.~3, pp.~1091--1095.}
                
\bibitem{LPE}
\textsc{A.~Iosevich, B.~Langowski, M.~Mirek, T.Z.~Szarek.}
\newblock{Lattice points problem, equidistribution and ergodic theorems for certain arithmetic spheres.}
\newblock{Mathematische Annalen 388, (2024), pp.~2041--2120.}

\bibitem{IWKO}
\textsc{H.~Iwaniec, E.~Kowalski.}
\newblock{Analytic Number Theory.}
\newblock{Vol.~53, Amer. Math. Soc. Colloquium Publications, Providence RI, (2004).}	

\bibitem{JKRW}
\textsc{R.L.~Jones, R.~Kaufman, J.~Rosenblatt,
M.~Wierdl.}
\newblock{Oscillation in ergodic theory.}
\newblock{Ergodic Theory Dynam. Systems~{\bf 18} (1998), no.~4,
pp.~889--935.}
		
\bibitem{JSW}
\textsc{R.L.~Jones, A.~Seeger, J.~Wright.}
\newblock{Strong variational and jump inequalities in harmonic
analysis.}
\newblock{Trans. Amer. Math. Soc.~{\bf 360} (2008), no.~12, pp.~6711--6742.}

\bibitem{JW}
\textsc{R.L.~Jones, M.~Wierdl.}
\newblock{Convergence and divergence
of ergodic averages.}
\newblock{Ergodic Theory Dynam. Systems 14.3, pp.~515--
535. (1994). ISSN: 0143-3857. DOI: 10 . 1017 / S0143385700008002. URL:
https://doi.org/10.1017/S0143385700008002.}

\bibitem{Kar}
\textsc{A.A.~Karatsuba.}
\newblock{Estimates for trigonometric sums by Vinogradov’s method, and some applications.} \newblock{Proc. Steklov Inst. Math., 112:251--265, 1971.}

\bibitem{Kosz}
\textsc{D.~Kosz.}
\newblock {Sharp constants in
inequalities admitting the Calder\'{o}n transference principle.}
\newblock {Ergodic Theory Dynam. Systems~{\bf 44} (2024),
pp.~1597--1608.}

\bibitem{LaV1}
\textsc{P.~LaVictoire.}
\newblock{Universally
$L^1$-Bad Arithmetic Sequences.}  \newblock{J.~Anal. Math.~{\bf 113}
(2011), no.~1, pp.~241--263.}

\bibitem{Mes}
\textsc{N.~Mehlhop, W.~S{\l}omian.}
\newblock{Oscillation and jump inequalities for the polynomial ergodic averages along multi-dimensional subsets of primes .}
\newblock{Math. Ann.  {\bf 388} (2024), pp. 2807--2842.}

\bibitem{Mirek}
\textsc{M.~Mirek.}
\newblock{Weak type $(1, 1)$ inequalities for
discrete rough maximal functions.}
\newblock{J. Anal. Mat. 127 (2015), 303--337.}



\bibitem{MOE}
\textsc{M.~Mirek, T.Z.~Szarek, J.~Wright.} \newblock{Oscillation
inequalities in ergodic theory and analysis: one-parameter and
multi-parameter perspectives.}
\newblock{Rev. Mat. Iberoam. 38 (2022), no.~7,
2249--2284.}

\bibitem{UA}
\textsc{M.~Mirek, B.~Trojan, P.~Zorin-Kranich.}
\newblock{Variational estimates for averages and truncated singular integrals along the prime numbers.}
\newblock{Transactions of the American Mathematical Society 369, (2017), no.~8, 5403--5423.}

\bibitem{MSZ3}
\textsc{M.~Mirek, E.M.~Stein, P.~Zorin-Kranich.}
\newblock{Jump inequalities for translation-invariant operators of
Radon type on $\mathbb Z^d$.}
\newblock{Adv. Math.~{\bf 365} (2020), article
no.~107065.}


\bibitem{MW}
\textsc{S.~Mondal, M.~Roy, M.~Wierdl.}
\newblock{Sublacunary sequences that are strong sweeping out.}
\newblock{New York Journal of Mathematics, 29, (2023), 1060--1074.} 

\bibitem{Nair}
\textsc{R.~Nair.}
\newblock{On polynomials in primes and J.~Bourgain's circle method approach to ergodic theorems II.}
\newblock{
Studia Mathematica, {\bf105}, (1993), 207--233.}

\bibitem{Lillian}
\textsc{L.B.~Pierce.}
\newblock{The Vinogradov Mean Value Theorem [after Wooley, and Bourgain, Demeter and Guth].}
\newblock{Ast\'Erisque. 2017, Jul 4.}

\bibitem{RW}
\textsc{J.~Rosenblatt, M.~Wierdl.}
\newblock{Pointwise ergodic theorems via harmonic analysis. In Proc. Conference on Ergodic Theory (Alexandria, Egypt, 1993).}
\newblock{London Mathematical Society Lecture Notes, {\bf 205}, (1995), pp.~3--151.}

\bibitem{Trojan}
\textsc{B.~Trojan.}
\newblock{Variational estimates for discrete operators modeled on multi-dimensional polynomial subsets of primes.}
\newblock{Math. Ann. {\bf374} (2019), pp.~1597--1656.}

\bibitem{UZ}
\textsc{R.~Urban, J.~Zienkiewicz.}
\newblock{Weak type $(1,1)$ estimates for a class of discrete rough maximal functions.}
\newblock{Math. Res. Lett. {\bf14} (2007), no.~2, pp.~227--237.}

\bibitem{V}
\textsc{I.M.~Vinogradov.}
\newblock{The method of trigonometrical sums in the theory of numbers.}
\newblock{Interscience Publishers New York, (1954)}.

\bibitem{W}
\textsc{T.D.~Wooley.}
\newblock{Vinogradov’s mean value theorem via efficient congruencing.}
\newblock{Ann. of Math. (2), 175(3):1575--1627, 2012.}

\bibitem{Wierdl}
\textsc{M.~Wierdl.}
\newblock{Pointwise ergodic theorem along the prime numbers.}
\newblock{Israel J. Math. {\bf64} (1988), no.~3, pp.~315--336.}

\bibitem{Wierdlphd}
\textsc{M.~Wierdl.}
\newblock{Almost everywhere convergence and recurrence along subsequences in ergodic theory.}
\newblock{Ph.D. thesis, Ohio State University, 1989.}

\end{thebibliography}
\end{document}